\documentclass[pdf,sn-mathphys-num]{sn-jnl}

\usepackage{amsmath}
\usepackage{amssymb}
\usepackage{paralist}
\usepackage[misc]{ifsym}
\usepackage{epsfig} %For pictures: screened artwork should be set up with an 85 or 100 line screen
\usepackage{epstopdf} %This is to transfer .eps figure to .pdf figure; please compile your article using PDFLaTex or PDFTeXify.
\usepackage{hyperref}

\theoremstyle{thmstyleone}%
\newtheorem{theorem}{Theorem}%  meant for continuous numbers
\newtheorem{proposition}[theorem]{Proposition}% 

\theoremstyle{thmstyletwo}%
\newtheorem{remark}{Remark}%
\newtheorem{lemma}{Lemma}
\newtheorem{corollary}{Corollary}%%
\theoremstyle{thmstylethree}%

\raggedbottom

\title[Stackelberg LQDG with follower's cheap control]{Suboptimal open-loop solution of a Stackelberg linear-quadratic differential game with cheap control of a follower: analytical/numerical study}

\author[1]{\fnm{Valery Y.} \sur{Glizer}}\email{valgl120@gmail.com}
%\equalcont{These authors contributed equally to this work.}
\author[1]{\fnm{Vladimir} \sur{Turetsky}}\email{turetsky1@braude.ac.il}
%\equalcont{These authors contributed equally to this work.}
\affil[1]{\orgdiv{Department of Mathematics}, \orgname{Braude College of Engineering},\\ \orgaddress{\street{Snunit 51, P.O.B. 78}, \city{Karmiel}, \postcode{2161002},  \country{Israel}}}

%%%%%%%%%%%%%%%%%%%%%%%%%%%%%%%%%%%%%%%%%%%%%%%%%%%%%%%%%%
%      4. AUTHOR NAMES/ADDRESSES/AFFILIATIONS SECTION
%%%%%%%%%%%%%%%%%%%%%%%%%%%%%%%%%%%%%%%%%%%%%%%%%%%%%%%%%%

\begin{document}

\abstract{
A two-player finite horizon linear-quadratic Stackelberg differential game is considered. The feature of this game is that the control cost of a follower in the cost functionals of both players is small, which means that the game under consideration is a cheap control game. The open-loop solution of this game is studied. Using the game's solvability conditions, obtaining such a game's solution is reduced to the solution of a proper boundary-value problem. Due to the smallness of the follower's control cost, this boundary-value problem is singularly perturbed. The asymptotic behaviour of the solution to this problem is analysed. Based on this analysis, the asymptotic behaviour of the open-loop optimal players' controls and the optimal values of the cost functionals is studied. Using these results, asymptotically suboptimal players' controls are designed. An illustrative example of a supply chain problem with a small control cost of a retailer is presented.}

\keywords{Stackelberg linear-quadratic differential game, cheap control game, singularly perturbed boundary-value problem, asymptotic analysis, open-loop suboptimal players' controls}

\maketitle

\section{Introduction}

In this paper, we consider a two-player finite horizon linear-quadratic Stackelberg differential game. For this game, we study the case where the control cost of a follower in the cost functionals of both players is small, meaning that this game is a cheap control game. More generally, a cheap control problem is an extremal control problem in which a control cost of at least one decision maker is much smaller than the cost of the state variable in at least one cost functional of the problem. Cheap control problems are important in qualitative and quantitative analysis of many topics in optimal control, $H_{\infty}$-control, and differential game theories. Thus, such problems are important in: (1) qualitative study and analytical/numerical computation of singular controls and arcs (see, e.g, \cite{Omalley-1974,Bell-Jacobson-1975,Omalley-conf-1975,Omalley-1975,Omalley-1976,Omalley-1977,Kurina-1977,
Sannuti-Wason-1985,Saberi-Sannuti-1987,Smetannikova-Sobolev-2005,Glizer-SIAM-2012,Glizer-Optimization,Shinar-Glizer-Turetsky-IGTR,Glizer-Kelis-book});
(2) analysis of limiting forms and maximally achievable accuracy of optimal regulators and filters (see, e.g., \cite{Kwakernaak-Sivan-1972,Francis-1979,Saberi-Sannuti-1986,Lee-Bien-1987,Braslavsky-et-al-1999,Seron-et-al-1999,Glizer-Kelis-NACO-2018}); (3) inverse optimal control problems (see e.g. \cite{Moylan-Anderson-1973});
(4) optimal control problems with high gain control in dynamics (see, e.g., \cite{Young-Kokotovic-Utkin-1977,Kokotovic-Khalil-OReilly-1986}).

Cheap control differential games and closely related singular differential games appear in various applications. For example, such games appear in robust interception problems with uncertainties (see, e.g., \cite{Turetsky-Glizer-IGTR-2007}), in robust tracking problems with uncertainties (see, e.g., \cite{Turetsky-Glizer-Shinar-IJSS-2014,Turetsky-2016}), in pursuit-evasion problems (see, e.g., \cite{Shinar-Glizer-Turetsky-IGTR,Starr-Ho-1969,Turetsky-Shinar-Automatica,Turetsky-JOTA-2004,
Turetsky-Glizer-Axioms,Glizer-NoDEA-2000,Glizer-Axims-2021}), in robust investment
problems (see, e.g., \cite{Hu-Oksendal-Sulem}), in biology processes (see e.g. \cite{Hamelin-Lewis}), in supply chain problems (see \cite{Turetsky-Glizer-IMA-OR-2025}).

The smallness of the control cost yields a singularly perturbed boundary-value problem associated with a cheap control problem by solvability (control optimality) conditions.

As aforementioned, in this paper, we consider a cheap control differential game. Cheap control differential games were extensively studied in the literature. Thus, various zero-sum cheap control games were analyzed in \cite{Glizer-Optimization,Shinar-Glizer-Turetsky-IGTR,Glizer-Kelis-book,Turetsky-Glizer-IGTR-2007,Turetsky-Glizer-Shinar-IJSS-2014,
Turetsky-Shinar-Automatica,Turetsky-JOTA-2004,Turetsky-Glizer-Axioms,Glizer-NoDEA-2000,Petersen-1986}. In \cite{Glizer-Axims-2021,Glizer-PAFA-2022,Glizer-ASVAO}, different cheap control Nash equilibrium games were studied. However, to the best of our knowledge, a cheap control Stackelberg equilibrium differential game was considered only in two works \cite{Glizer-Turetsky-Axioms-2024,Turetsky-Glizer-IMA-OR-2025} where the case of the leader's cheap control was studied.

It should be noted the following. Similarly to a zero-sum differential game and a Nash equilibrium differential game, a Stackelberg equilibrium differential game is not a cooperative game. Furthermore, similarly to a Nash equilibrium differential game, a Stackelberg equilibrium differential game is a nonzero-sum differential game. However, a Stackelberg equilibrium differential game differs considerably from a zero-sum differential game and from a Nash equilibrium differential game. Namely, in a zero-sum differential game and in a Nash equilibrium differential game, players act simultaneously, while in a Stackelberg equilibrium differential game, they do not. A Stackelberg equilibrium differential game is a hierarchical game. In this game, one of the players (the leader) makes its best decision based on the set of possible best decisions of the other player (the follower). Thus, the leader has an advantage in comparison to the follower. More details on a Stackelberg equilibrium differential game can be found in the book \cite{Basar-Olsder}. There are different real-life applications of a Stackelberg equilibrium differential game. Thus, such a game is used in the analysis and optimization of supply and marketing channels (see, e.g.,  \cite{Turetsky-Glizer-IMA-OR-2025,He-Prasad-et-al-2007,Colombo-Labrecciosa-2019,Kanska-Wiszniewska-Matyszkiel-2022} and references therein). Application of a Stackelberg equilibrium differential game to a security (defenders-attackers) problem is presented in \cite{Solis-Clempner-2020} (see also references therein). In \cite{Wan-et-al-2024}, the problem of a transponder’s communication channel with multiple-user terminals is modeled by a Stackelberg equilibrium differential game. Application of a Stackelberg equilibrium differential game to analysis and optimization of power systems is proposed in \cite{Ming-et-al-2024}.

A small cost of the follower's control in its cost functional (a cheap control of the follower) can considerably decrease the aforementioned advantage of the leader. This circumstance can be important, for instance, in a supply chain problem in the case where the optimization of a state-dependent part of the follower's (the retailer's) cost functional is much more essential than the optimization of its control cost in this cost functional.

The main motivation of this paper is to study a novel nontrivial class of cheap control differential games. More precisely, in this paper, we consider a Stackelberg finite horizon linear-quadratic differential game with two players (a leader and a follower) and with a small cost of the follower's control in both cost functionals. We study the asymptotic behaviour of an open-loop solution to this game. For this purpose, we consider the boundary-value problem associated with the Stackelberg game by the solvability conditions. Due to the small cost of the follower's control, the aforementioned boundary-value problem is singularly perturbed in the conditionally stable case. An asymptotic solution of this problem is formally constructed and justified. Based on this solution, asymptotic expansions of the optimal controls of the leader and the follower are constructed. Using these results, suboptimal controls of the players are formally designed and justified. Asymptotic approximations of the optimal values of the cost functionals are also derived. The theoretical results are applied to an asymptotic solution of a supply chain problem with a single manufacturer and a single retailer.

It should be noted that a cheap control problem is closely related to a control problem for a singularly perturbed system. Namely, the former can be converted to the latter by a proper change of the control variable with the small (cheap) cost (see, e.g., \cite{Sannuti-1983,Kokotovic-1984,Popescu-Gajic-1999,Hoai-2017}). Linear-quadratic Stackelberg differential games for singularly perturbed systems were studied in the works \cite{Salman-Cruz-1979,
Khalil-Medanic-1980,Salman-Cruz-1983,Mizukami-Xu-1990,Mizukami-Suzumura-1993,Mukaidani-2007}. However, in these works (in contrast with the present paper), the infinite horizon version of the game was considered, and a suboptimal closed-loop (feedback) solution to the considered game was derived.

The paper is organized as follows. In the next section (Section 2), the Stackelberg cheap control game is rigorously formulated. Main notions and assumptions are introduced. Objectives of the paper are stated. In Section 3, the solvability conditions of the considered game are presented. Asymptotic solution of the singularly perturbed boundary-value problem, appearing in these solvability conditions, is formally constructed and justified in Section 4. Section 5 is devoted to obtaining the main results of the paper. In this section, the asymptotic expansion of the game's solution is formally derived and justified. Also, the suboptimal players' controls are formally designed and justified. In Section 6, an illustrative example of a supply chain problem with a small control
cost of a retailer is presented. Conclusions are given in Section 7.

The following main notations are applied in the paper.

\begin{enumerate}
\item	$\mathbb{R}^{n}$ denotes the $n$-dimensional real Euclidean space;
\item	$\|\cdot\|$ denotes the Euclidean norm either of a vector ($\|z\|$) or of a matrix ($\|A\|$);
\item   $\mathbb{C}[t_{1},t_{2}; \mathbb{R}^{q}]$ denotes the linear space of all functions $f(t): [t_{1},t_{2}] \rightarrow \mathbb{R}^{q}$ continuous in the interval $[t_{1},t_{2}]$;
\item	the superscript $"T"$ denotes the transposition either of a vector ($z^{T}$) or of a matrix ($A^{T}$);
\item $I_{n}$ denotes the identity matrix of dimension $n$;
\item ${\rm col}\big(x_{1},x_{2},...,x_{k}\big)$, where $x_{1} \in \mathbb{R}^{n_1}$, $x_{2} \in \mathbb{R}^{n_2}$,..., $x_{k} \in \mathbb{R}^{n_{k}}$, denotes the column block-vector of the dimension $n_{1} + n_{2} + ... +n_{k}$ with the upper block $x_{1}$, the next block $x_{2}$ and so on, and the lower block $x_{k}$;
\item ${\rm Re}(\mu)$ denotes the real part of a complex number $\mu$.
\end{enumerate}

\section{Problem statement}\label{sec2}
\subsection{Initial game formulation}\label{subsect-2.1}

The dynamics of the game is described by the following differential equation:
\begin{equation}
\frac{dZ(t)}{dt}={\mathcal{A}}(t)Z(t)+{\mathcal{B}}_{\rm u}(t)u(t)+{\mathcal{B}}_{\rm v}(t)v(t),\  \  \ t\in [0,t_{f}],\  \  \ Z(0)=Z_{0},  \label{or-eq}
\end{equation}
where $Z(t)\in \mathbb{R}^{n}$ is the state vector, $u(t)\in \mathbb{R}^{r}$, ($r \le n$),
$v(t)\in \mathbb{R}^{s}$, ($s < n$) are the players' controls; $t_{f}> 0$ is a given time-instant; ${\mathcal{A}}(t)$ and ${\mathcal{B}}_{i}(t)$, $(i = {\rm u,v})$, are given matrix-valued functions of corresponding dimensions continuous for $t \in [0,t_{f}]$; $Z_{0}\in \mathbb{R}^{n}$ is a given vector.

The cost functionals of the player with the control $u(t)$ (the leader) and the player
with the control $v(t)$ (the follower) are, respectively,
\begin{eqnarray}\label{perf-ind-1}{\mathcal{J}}_{\rm u}(u,v)=
\frac{1}{2}\int_{0}^{t_{f}}\big[Z^{T}(t){\mathcal{D}}_{{\rm u}}(t)Z(t) + u^{T}(t)G_{{\rm u, u}}(t)u(t)
+ \varepsilon^{2}v^{T}(t)G_{{\rm u, v}}(t)v(t)\big]dt,
\end{eqnarray}
and
\begin{eqnarray}\label{perf-ind-2}{\mathcal{J}}_{\rm v}(u,v)=
\frac{1}{2}\int_{0}^{t_{f}}\big[ Z^{T}(t){\mathcal{D}}_{{\rm v}}(t)Z(t) + \varepsilon^{2}v^{T}(t)G_{{\rm v,v}}(t)v(t)
+ u^{T}(t)G_{{\rm v,u}}(t)u(t)\big]dt,
\end{eqnarray}
where, for any $t \in [0,t_{f}]$, ${\mathcal{D}}_{i}(t)$, $G_{i,j}(t)$, $(i={\rm u,v};\ j={\rm u,v})$ are given symmetric
matrices of corresponding dimensions; the matrix-valued functions ${\mathcal{D}}_{i}(t)$, $G_{i,j}(t)$, $(i={\rm u,v};\ j={\rm u,v})$ are continuous for $t \in [0,t_{f}]$; $\varepsilon >0$ is a small parameter.

In what follows, we assume:
\newline \textbf{(A1)} For any $t \in [0,t_{f}]$, the matrix ${\mathcal{B}}_{\rm v}(t)$ has full column rank $s$.
\newline \textbf{(A2)} For any $t \in [0,t_{f}]$, the matrices
${\mathcal{D}}_{i}(t)$, $(i={\rm u,v})$ are positive semi-definite.
\newline \textbf{(A3)} For any $t \in [0,t_{f}]$,
the matrices $G_{{\rm u,u}}(t)$, $G_{{\rm v,v}}(t)$, $G_{{\rm u,v}}(t)$ and $G_{{\rm v,u}}(t)$ are positive definite.

\begin{remark}\label{mathcal-G_uu-G_vv=I}In what follows, for the sake of technical simplicity (but without loss of generality), we set $G_{\rm u,u}(t) \equiv I_{r}$, $G_{\rm v,v}(t) \equiv I_{s}$, $t \in [0,t_{f}]$.\end{remark}

We will solve the game, given by the differential equation (\ref{or-eq}) and the cost functionals (\ref{perf-ind-1}) and (\ref{perf-ind-2}), with respect to the open-loop Stackelberg (hierarchical) equilibrium (solution) $\big(u^{*}(t),v^{*}(t)\big)$. This equilibrium is defined as follows in the set of all pairs $\big(u(t),v(t)\big)$, $u(t) \in \mathbb{C}[0,t_{f}; \mathbb{R}^{r}]$, $v(t) \in \mathbb{C}[0,t_{f}; \mathbb{R}^{s}]$ (for more details see, e.g.,~\cite{Basar-Olsder,Dockner-2000}).

For any $u(t)\in \mathbb{C}[0,t_{f}; \mathbb{R}^{r}]$, let
\begin{equation}\label{v-0-definition}v^{0}\big(t;u(t)\big) \stackrel{\triangle}{=} {\rm argmin}_{v(t)\in \mathbb{C}[0,t_{f}; \mathbb{R}^{s}]}{\mathcal{J}}_{\rm v}\big(u(t),v(t)\big)
\end{equation}
along the trajectories of (\ref{or-eq}).

Furthermore, let
\begin{equation}\label{u*-definition}u^{*}(t) \stackrel{\triangle}{=} {\rm argmin}_{u(t)\in \mathbb{C}[0,t_{f}; \mathbb{R}^{r}]}{\mathcal{J}}_{\rm u}\Big(u(t),v^{0}\big(t;u(t)\big)\Big)
\end{equation}
along the trajectories of (\ref{or-eq}) with $v(t) = v^{0}\big(t;u(t)\big)$.

Moreover, let
\begin{equation}\label{v*-definition}v^{*}(t) \stackrel{\triangle}{=} v^{0}\big(t;u^{*}(t)\big).
\end{equation}

Finally, the pair $\big(u^{*}(t),v^{*}(t)\big)$ is the open-loop Stackelberg solution of the game (\ref{or-eq})--(\ref{perf-ind-2}).

We call the trajectory of the differential equation (\ref{or-eq}), generated by the pair of controls $\big(u(t) = u^{*}(t) , v(t) = v^{*}(t)\big)$, the Stackelberg optimal trajectory of the game (\ref{or-eq})--(\ref{perf-ind-2}). Also, we call the values ${\mathcal{J}}_{\rm u}\big(u^{*}(t),v^{*}(t)\big)$ and ${\mathcal{J}}_{\rm v}\big(u^{*}(t),v^{*}(t)\big)$ the Stackelberg optimal values of the cost functionals ${\mathcal{J}}_{\rm u}(u,v)$ and ${\mathcal{J}}_{\rm v}(u,v)$, respectively, in the game (\ref{or-eq})--(\ref{perf-ind-2}).

\subsection{Transformation of the game (\ref{or-eq})--(\ref{perf-ind-2})}

In what follows, we assume:\\
\newline \textbf{(A4)} $\det\Big({\mathcal{B}}_{\rm v}^{T}(t){\mathcal{D}}_{\rm v}(t){\mathcal{B}}_{\rm v}(t)\Big) \neq 0 $\  \ $\forall t \in [0,t_{f}]$.
\newline \textbf{(A5)} The matrix-valued functions ${\mathcal{A}}(t)$, ${\mathcal{B}}_{\rm u}(t)$, ${\mathcal{D}}_{\rm u}(t)$, $G_{{\rm u, v}}(t)$ and $G_{{\rm v, u}}(t)$ are continuously differentiable in the interval $[0,t_{f}]$.
\newline \textbf{(A6)} The matrix-valued functions ${\mathcal{B}}_{\rm v}(t)$ and ${\mathcal{D}}_{\rm v}(t)$ are twice continuously differentiable in the interval $[0,t_{f}]$.

By ${\mathcal B}_{c}(t)$, we denote a complement matrix to the matrix ${\mathcal B}_{\rm v}(t)$ for any $t \in [0,t_{f}]$. Thus, the dimension of the matrix ${\mathcal B}_{c}(t)$ is $n\times(n-s)$, and the block-form matrix $\big({\mathcal B}_{c}(t),{\mathcal B}_{\rm v}(t)\big)$ is invertible for all $t\in [0,t_{f}]$. Due to the assumption (A6) and the results of the book \cite{Glizer-Kelis-book} (Section 3.3), the matrix-valued function ${\mathcal B}_{c}(t)$ can be chosen as twice continuously differentiable in the interval $[0,t_{f}]$.

Consider the following matrix-valued functions of $t \in [0,t_{f}]$:
\begin{equation}\label{matr-L} {\mathcal L}_{\rm v}(t) = {\mathcal B}_{c}(t) - {\mathcal B}_{\rm v}(t)\big({\mathcal B}_{\rm v}^{T}(t){\mathcal D}_{\rm v}(t){\mathcal B}_{\rm v}(t)\big)^{- 1}{\mathcal B}_{\rm v}^{T}(t){\mathcal D}_{\rm v}(t){\mathcal B}_{c}(t),\ \ {\mathcal R}_{\rm v}(t) = \big({\mathcal L}_{\rm v}(t),{\mathcal B}_{\rm v}(t)\big).
\end{equation}

\begin{remark}\label{properties-mathcal-R} By virtue of the results of \cite{Glizer-Kelis-book} (Section 3.3), the matrix ${\mathcal R}_{\rm v}(t)$ is invertible for all $t \in [0,t_{f}]$. Moreover, the matrix-valued function ${\mathcal R}_{\rm v}(t)$ is twice continuously differentiable with respect to  $t \in [0,t_{f}]$.
\end{remark}

Using the matrix-valued function ${\mathcal{R}}_{\rm v}(t)$, we make the following transformation of the state variable in the game (\ref{or-eq})--(\ref{perf-ind-2}):
\begin{equation}\label{state-transform}
Z(t)={\mathcal R}_{\rm v}(t)z(t),\  \  \  \  t \in[0,t_f],
\end{equation}
where $z(t)\in \mathbb{R}^{n}$ is a new state variable.

Using  Remark \ref{mathcal-G_uu-G_vv=I}, we obtain (quite similarly to the results of \cite{Glizer-Kelis-book} (Section 3.3)) the following assertion.
\begin{proposition}\label{init-problem-transf}
Let the assumptions (A1)--(A6) be valid. Then,
the transformation (\ref{state-transform}) converts the system (\ref{or-eq}) and the functionals
(\ref{perf-ind-1}) and (\ref{perf-ind-2}) into the following new system and functionals:
\begin{equation}\label{new-eq}
\frac{dz(t)}{dt}=A(t)z(t)+B_{\rm u}(t)u(t)+B_{\rm v}(t)v(t),\  \  \ t\in [0,t_{f}],\  \  \ z(0)=z_{0},
\end{equation}
\begin{equation}\label{new-perf-ind-1}J_{\rm u}(u,v)=
\frac{1}{2}\int_{0}^{t_{f}}\big[
z^{T}(t)D_{\rm u}(t)z(t) + u^{T}(t)u(t) + \varepsilon^{2}v^{T}(t)G_{\rm u,v}(t)v(t)\big]dt,
\end{equation}
\begin{equation}\label{new-perf-ind-2}J_{\rm v}(u,v)=\frac{1}{2}
\int_{0}^{t_{f}}\big[
z^{T}(t)D_{\rm v}(t)z(t) + \varepsilon^{2}v^{T}(t)v(t) + u^{T}(t)G_{\rm v,u}(t)u(t)\big]dt,
\end{equation}
where
\begin{equation}\label{new-matr-A}
A(t)={\mathcal R}_{\rm v}^{-1}(t)\big[{\mathcal{A}}(t){\mathcal R}_{\rm v}(t)-d{\mathcal R}_{\rm v}(t)/dt\big],
\end{equation}
\begin{equation}\label{new-matr-B_uv}
B_{\rm u}(t)={\mathcal{R}}_{\rm v}^{-1}(t){\mathcal{B}}_{\rm u}(t),\  \  \  B_{\rm v}(t)={\mathcal{R}}_{\rm v}^{-1}(t){\mathcal{B}}_{\rm v}(t) =\left(\begin{array}{l} 0 \\
I_{s}\end{array}\right),
\end{equation}
\begin{equation}
D_{\rm u}(t)={\mathcal{R}}_{\rm v}^{T}(t){\mathcal{D}}_{\rm u}(t)\mathcal{R}_{\rm v}(t), \label{new-matr-D_u}
\end{equation}
\begin{equation}\begin{array}{llll}
D_{\rm v}(t)=\mathcal{R}_{\rm v}^{T}(t){\mathcal{D}}_{\rm v}(t)\mathcal{R}_{\rm v}(t)=\left(
\begin{array}{l}
D_{{\rm v},1}(t)\  \  \  \  \ 0 \\
0\  \  \  \  \  \  \  \  \  \  \  \ D_{{\rm v},2}(t)\end{array}\right), \\ \\
D_{{\rm v},1}(t)\ ={\mathcal{L}}_{\rm v}^{T}(t){\mathcal{D}}_{\rm v}(t){\mathcal{L}}_{\rm v}(t),\  \  \
D_{{\rm v},2}(t)={\mathcal{B}}_{\rm v}^{T}(t){\mathcal{D}}_{\rm v}(t){\mathcal{B}}_{\rm v}(t),\end{array}\label{new-matr-D_v}\end{equation}
\begin{equation}\label{z_0} z_{0}={\mathcal{R}}_{\rm v}^{-1}(0)Z_{0}.\end{equation}

The matrix $D_{\rm v,1}(t)$ is symmetric positive semi-definite,
while the matrix $D_{\rm v,2}(t)$ is symmetric positive definite for all $t\in[0,t_f]$.
Moreover, the matrix-valued functions $A(t)$, $B_{\rm u}(t)$, $D_{\rm u}(t)$, $G_{\rm u,v}(t)$, and $G_{\rm v,u}(t)$ are continuously differentiable in the interval $[0,t_f]$, while the matrix-valued function $D_{\rm v}(t)$ is twice continuously differentiable in the interval $[0,t_f]$.
\end{proposition}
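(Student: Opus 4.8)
The plan is to carry out the change of variables (\ref{state-transform}) explicitly and verify each claimed identity and regularity property in turn, the substantive content being a single orthogonality computation that forces the off-diagonal blocks of $D_{\rm v}$ to vanish. First I would differentiate $Z(t) = {\mathcal R}_{\rm v}(t) z(t)$. Since ${\mathcal R}_{\rm v}$ is differentiable (Remark \ref{properties-mathcal-R}), the product rule gives $dZ/dt = (d{\mathcal R}_{\rm v}/dt) z + {\mathcal R}_{\rm v}\, dz/dt$. Substituting this and $Z = {\mathcal R}_{\rm v} z$ into (\ref{or-eq}) and left-multiplying by the everywhere invertible matrix ${\mathcal R}_{\rm v}^{-1}(t)$ yields (\ref{new-eq}) with the coefficients (\ref{new-matr-A})--(\ref{new-matr-B_uv}); the initial condition (\ref{z_0}) is obtained from (\ref{state-transform}) at $t=0$. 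The identity $B_{\rm v}(t) = {\rm col}(0,I_s)$ follows at once from the block structure ${\mathcal R}_{\rm v} = ({\mathcal L}_{\rm v}, {\mathcal B}_{\rm v})$: reading ${\mathcal R}_{\rm v}^{-1}{\mathcal R}_{\rm v} = I_n$ column-by-column gives ${\mathcal R}_{\rm v}^{-1}{\mathcal B}_{\rm v} = {\rm col}(0, I_s)$.

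For the functionals, substituting $Z = {\mathcal R}_{\rm v} z$ into the quadratic state terms of (\ref{perf-ind-1})--(\ref{perf-ind-2}) produces $z^{T} {\mathcal R}_{\rm v}^{T} {\mathcal D}_{i} {\mathcal R}_{\rm v} z$, i.e. the matrices (\ref{new-matr-D_u})--(\ref{new-matr-D_v}); the control terms are unaffected by the state change, and with $G_{\rm u,u} \equiv I_r$, $G_{\rm v,v} \equiv I_s$ (Remark \ref{mathcal-G_uu-G_vv=I}) they take the form displayed in (\ref{new-perf-ind-1})--(\ref{new-perf-ind-2}). The heart of the argument is the block-diagonal form of $D_{\rm v}$ in (\ref{new-matr-D_v}). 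Expanding ${\mathcal R}_{\rm v}^{T} {\mathcal D}_{\rm v} {\mathcal R}_{\rm v}$ in the partition $({\mathcal L}_{\rm v}, {\mathcal B}_{\rm v})$, the diagonal blocks are ${\mathcal L}_{\rm v}^{T} {\mathcal D}_{\rm v} {\mathcal L}_{\rm v} = D_{\rm v,1}$ and ${\mathcal B}_{\rm v}^{T} {\mathcal D}_{\rm v} {\mathcal B}_{\rm v} = D_{\rm v,2}$, while the off-diagonal block is ${\mathcal B}_{\rm v}^{T} {\mathcal D}_{\rm v} {\mathcal L}_{\rm v}$ together with its transpose.

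Substituting the definition (\ref{matr-L}) of ${\mathcal L}_{\rm v}$ and using that $({\mathcal B}_{\rm v}^{T} {\mathcal D}_{\rm v} {\mathcal B}_{\rm v})^{-1}$ exists by (A4), I would compute ${\mathcal B}_{\rm v}^{T} {\mathcal D}_{\rm v} {\mathcal L}_{\rm v} = {\mathcal B}_{\rm v}^{T} {\mathcal D}_{\rm v} {\mathcal B}_{c} - ({\mathcal B}_{\rm v}^{T} {\mathcal D}_{\rm v} {\mathcal B}_{\rm v})({\mathcal B}_{\rm v}^{T} {\mathcal D}_{\rm v} {\mathcal B}_{\rm v})^{-1}({\mathcal B}_{\rm v}^{T} {\mathcal D}_{\rm v} {\mathcal B}_{c}) = 0$; symmetry of ${\mathcal D}_{\rm v}$ then kills the transposed block as well. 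This is precisely the cancellation for which ${\mathcal L}_{\rm v}$ was designed, namely ${\mathcal L}_{\rm v}$ is the ${\mathcal D}_{\rm v}$-weighted projection of ${\mathcal B}_{c}$ off the range of ${\mathcal B}_{\rm v}$, and it is the only genuinely nontrivial step.

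Finally I would record the definiteness and smoothness claims. Positive semi-definiteness of $D_{\rm v,1}$ is immediate from $w^{T} D_{\rm v,1} w = ({\mathcal L}_{\rm v} w)^{T} {\mathcal D}_{\rm v}({\mathcal L}_{\rm v} w) \ge 0$ via (A2); the matrix $D_{\rm v,2} = {\mathcal B}_{\rm v}^{T} {\mathcal D}_{\rm v} {\mathcal B}_{\rm v}$ is positive semi-definite by (A2) and nonsingular by (A4), hence positive definite. The regularity assertions follow by differentiating the defining products: ${\mathcal R}_{\rm v}$ is twice continuously differentiable and invertible (Remark \ref{properties-mathcal-R}), so ${\mathcal R}_{\rm v}^{-1}$ and $d{\mathcal R}_{\rm v}/dt$ are, respectively, twice and once continuously differentiable; combining these with the smoothness of ${\mathcal A}, {\mathcal B}_{\rm u}, {\mathcal D}_{\rm u}$ from (A5) shows $A, B_{\rm u}, D_{\rm u}$ are continuously differentiable, while $D_{\rm v} = {\mathcal R}_{\rm v}^{T} {\mathcal D}_{\rm v} {\mathcal R}_{\rm v}$ inherits twice continuous differentiability from (A6). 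Since $G_{\rm u,v}$ and $G_{\rm v,u}$ are untouched by the transformation, their differentiability comes directly from (A5). I expect no obstacle beyond careful bookkeeping here; the only place demanding insight is the off-diagonal cancellation above.
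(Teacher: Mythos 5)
Your proposal is correct and is essentially the argument the paper relies on: the paper gives no inline proof, instead deferring to the analogous computation in the cited book (Section 3.3 of the Glizer--Kelis monograph), and your direct verification — product rule on $Z={\mathcal R}_{\rm v}z$, column-wise reading of ${\mathcal R}_{\rm v}^{-1}{\mathcal R}_{\rm v}=I_{n}$ to get $B_{\rm v}={\rm col}(0,I_{s})$, the cancellation ${\mathcal B}_{\rm v}^{T}{\mathcal D}_{\rm v}{\mathcal L}_{\rm v}=0$ forced by the definition of ${\mathcal L}_{\rm v}$, and the definiteness/smoothness bookkeeping via (A2), (A4)--(A6) — supplies exactly the details that citation stands in for. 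No gaps; the identification of the off-diagonal cancellation as the one substantive step is accurate.
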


\begin{remark}\label{equiv-original-transormed-games} It should be noted the following. The open-loop Stackelberg solution to the differential game \mbox{(\ref{new-eq})--(\ref{new-perf-ind-2})} is defined similarly to such a solution to the game (\ref{or-eq})--(\ref{perf-ind-2}). Furthermore, since the matrix ${\mathcal R}_{\rm v}(t)$ is invertible for all $t \in [0,t_{f}]$, then the transformation (\ref{state-transform}) is invertible. Hence, the games (\ref{or-eq})--(\ref{perf-ind-2}) and (\ref{new-eq})--(\ref{new-perf-ind-2}) have the same open-loop Stackelberg solution (if it exists), i.e., these games are equivalent to each other. Furthermore, the Stackelberg optimal values of the cost functionals of the leader and the follower in the game (\ref{new-eq})--(\ref{new-perf-ind-2}) are equal to such values in the game (\ref{or-eq})--(\ref{perf-ind-2}). Moreover, due to the form of the matrices $B_{\rm v}(t)$ and $D_{\rm v}(t)$, the new game (\ref{new-eq})--(\ref{new-perf-ind-2}) is much simpler than the initially formulated game (\ref{or-eq})--(\ref{perf-ind-2}). Therefore, in what follows in the paper, we deal with the game (\ref{new-eq})--(\ref{new-perf-ind-2}).
\end{remark}

\subsection{Main objectives of the paper}

The objectives of the paper are:

\begin{itemize}
\item[(i)] To construct and justify an asymptotic expansion with respect to $\varepsilon$ of the open-loop Stackelberg solution $\big(u^{*}(t,\varepsilon), v^{*}(t,\varepsilon)\big)$ to the game (\ref{new-eq})--(\ref{new-perf-ind-2}).
\item[(ii)] To derive an asymptotically suboptimal Stackelberg solution to the game (\ref{new-eq})--(\ref{new-perf-ind-2}), {i.e., the pair of the admissible controls $\big(\check{u}(t,\varepsilon), \check{v}(t,\varepsilon)\big)$, such that}
\begin{eqnarray}\label{definition-suboptimal}\lim_{\varepsilon \rightarrow + 0}\big|J_{\rm u}\big(\check{u}(t,\varepsilon), \check{v}(t,\varepsilon)\big) -  J_{\rm u}^{*}(\varepsilon)\big| = 0,\nonumber\\
\lim_{\varepsilon \rightarrow + 0}\big|J_{\rm v}\big(\check{u}(t,\varepsilon), \check{v}(t,\varepsilon)\big) -  J_{\rm v}^{*}(\varepsilon)\big| = 0.
\end{eqnarray}
\item[(iii)] To construct and justify $\varepsilon$-free approximations of the Stackelberg optimal values $J_{\rm u}^{*}(\varepsilon) = J_{\rm u}\big(u^{*}(t,\varepsilon), v^{*}(t,\varepsilon)\big)$, $J_{\rm v}^{*}(\varepsilon) = J_{\rm v}\big(u^{*}(t,\varepsilon), v^{*}(t,\varepsilon)\big)$ of the cost functionals in the game (\ref{new-eq})--(\ref{new-perf-ind-2}), valid for all sufficiently small $\varepsilon>0$.
\end{itemize}

\section{Solvability Conditions of the Stackelberg Game (\ref{new-eq})--(\ref{new-perf-ind-2})}\label{sec3}

For a given $\varepsilon > 0$, we consider the following boundary-value problem in the time-interval $[0,t_{f}]$:
\begin{eqnarray}\label{initial-bound-value-pr}
\frac{dz(t)}{dt} = A(t)z(t) - S_{\rm u}(t)\lambda_{\rm u}(t) - S_{\rm v}(t,\varepsilon)\lambda_{\rm v}(t),\  \  \  \ z(0) = z_{0},\nonumber\\
\frac{d\lambda_{\rm u}(t)}{dt} = - D_{\rm u}(t)z(t) - A^{T}(t)\lambda_{\rm u}(t) + D_{\rm v}(t)\mu(t),\  \  \  \ \lambda_{\rm u}(t_{f}) = 0,\nonumber\\
\frac{d\lambda_{\rm v}(t)}{dt} = - D_{\rm v}(t)z(t) - A^{T}(t)\lambda_{\rm v}(t),\  \  \  \ \lambda_{\rm v}(t_{f}) = 0,\nonumber\\
\frac{d\mu(t)}{dt} = S_{\rm v}(t,\varepsilon)\lambda_{\rm u}(t) - S_{\rm u,v}(t,\varepsilon)\lambda_{\rm v}(t) + A(t)\mu(t),\  \  \  \ \mu(0) = 0,\nonumber\\
\end{eqnarray}
where
\begin{eqnarray}\label{matr-S_u-S_v-Suv} S_{\rm u}(t
) = B_{\rm u}(t)B_{\rm u}^{T}(t),\nonumber\\
S_{\rm v}(t,\varepsilon) = \frac{1}{\varepsilon^{2}}B_{\rm v}(t)B_{\rm v}^{T}(t)  = \left(\begin{array}{l}0\  \  \ 0\\
0\  \  \ \frac{1}{\varepsilon^{2}}I_{s}\end{array}\right),\nonumber\\
S_{\rm u,v}(t,\varepsilon) = \varepsilon^{2}B_{\rm v}(t)G_{\rm u,v}(t)B_{\rm v}^{T}(t) = \left(\begin{array}{l}0\  \  \ 0\\
0\  \  \ \varepsilon^{2}G_{\rm u,v}(t)\end{array}\right).\nonumber\\
\end{eqnarray}

By virtue of the results of the book~\cite{Basar-Olsder} (Section 7.6), the following assertion is valid.
\begin{proposition}\label{Stackelberg-solution} Let the assumptions
(A1)--(A6) be satisfied. Then, for any given $\varepsilon > 0$, the boundary-value problem (\ref{initial-bound-value-pr}) has the unique solution
${\rm col}\big(z(t,\varepsilon),\lambda_{\rm u}(t,\varepsilon),\allowbreak\lambda_{\rm v}(t,\varepsilon),\mu(t,\varepsilon)\big)$. Moreover, the Stackelberg game (\ref{new-eq})--(\ref{new-perf-ind-2}) has the unique open-loop solution $\big(u^{*}(t,\varepsilon),v^{*}(t,\varepsilon)\big)$, where
\begin{equation}\label{u*-v*}
u^{*}(t,\varepsilon) = - B_{\rm u}^{T}(t)\lambda_{\rm u}(t,\varepsilon),\  \  \  \ v^{*}(t,\varepsilon) = - \frac{1}{\varepsilon^{2}}B_{\rm v}^{T}(t)\lambda_{\rm v}(t,\varepsilon),\  \  \  \ t \in [0,t_{f}].
\end{equation}

The component $z(t,\varepsilon)$ of the solution to the boundary-value problem (\ref{initial-bound-value-pr}) is the Stackelberg optimal trajectory of the game (\ref{new-eq})--(\ref{new-perf-ind-2}).
\end{proposition}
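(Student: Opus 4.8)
The plan is to realize the hierarchical definition (\ref{v-0-definition})--(\ref{v*-definition}) through two nested applications of the Pontryagin maximum principle and then to paste the resulting optimality systems together into (\ref{initial-bound-value-pr}). First I would treat the inner (follower's) problem (\ref{v-0-definition}): for a fixed admissible $u$, minimizing $J_{\rm v}(u,v)$ over $v$ along (\ref{new-eq}) is a standard finite-horizon linear-quadratic optimal control problem. By (A2) the matrix $D_{\rm v}(t)$ is positive semi-definite and, by Remark \ref{mathcal-G_uu-G_vv=I}, the follower's control-penalty matrix $\varepsilon^{2}I_{s}$ is positive definite; hence $J_{\rm v}(u,\cdot)$ is strictly convex and coercive in $v$, so its first-order conditions are necessary and sufficient and determine $v^{0}(\cdot;u)$ uniquely. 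Introducing the follower's costate $\lambda_{\rm v}$, these conditions give the adjoint equation $d\lambda_{\rm v}/dt = -D_{\rm v}z - A^{T}\lambda_{\rm v}$ with $\lambda_{\rm v}(t_{f}) = 0$, together with the stationarity relation $\varepsilon^{2}v + B_{\rm v}^{T}\lambda_{\rm v} = 0$, i.e. $v^{0}(\cdot;u) = -\varepsilon^{-2}B_{\rm v}^{T}\lambda_{\rm v}$; this is exactly the third line of (\ref{initial-bound-value-pr}) and the second formula in (\ref{u*-v*}). Strict convexity also makes the follower's two-point problem uniquely solvable, so $v^{0}(\cdot;u)$ is a well-defined affine map of $u$.

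Next I would address the outer (leader's) problem (\ref{u*-definition}). Substituting the follower's reaction $v = -\varepsilon^{-2}B_{\rm v}^{T}\lambda_{\rm v}$ into (\ref{new-eq}) turns the state equation into $dz/dt = Az + B_{\rm u}u - S_{\rm v}(t,\varepsilon)\lambda_{\rm v}$ and couples it to the follower's adjoint equation; thus the leader faces a linear-quadratic optimal control problem in the extended state ${\rm col}(z,\lambda_{\rm v})$ with control $u$, in which $z$ carries the initial datum $z(0)=z_{0}$ and $\lambda_{\rm v}$ the terminal datum $\lambda_{\rm v}(t_{f})=0$. Assigning the leader's costates $\lambda_{\rm u}$ to the $z$-equation and $\mu$ to the $\lambda_{\rm v}$-equation and writing the leader's Hamiltonian, the stationarity condition in $u$ gives $u + B_{\rm u}^{T}\lambda_{\rm u} = 0$, i.e. $u^{*} = -B_{\rm u}^{T}\lambda_{\rm u}$ (the first formula in (\ref{u*-v*})), while the adjoint equations for $\lambda_{\rm u}$ and $\mu$ reproduce the second and fourth lines of (\ref{initial-bound-value-pr}). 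The transversality conditions, arising because $z(t_{f})$ and $\lambda_{\rm v}(0)$ are free, supply the remaining boundary data $\lambda_{\rm u}(t_{f})=0$ and $\mu(0)=0$. Collecting all four ODEs with their boundary conditions yields precisely the boundary-value problem (\ref{initial-bound-value-pr}).

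Finally I would establish existence and uniqueness. Because the follower's reaction and the corresponding state response are affine in $u$, the reduced leader's cost $u \mapsto J_{\rm u}(u,v^{0}(\cdot;u))$ is quadratic; its Hessian dominates the term $\int_{0}^{t_{f}}u^{T}u\,dt$, which is positive definite by Remark \ref{mathcal-G_uu-G_vv=I}, whereas the state term (via $D_{\rm u}\ge 0$ from (A2)) and the $\varepsilon^{2}v^{T}G_{\rm u,v}v$ term (via $G_{\rm u,v}>0$ from (A3)) only add positive-semidefinite contributions. Hence the reduced cost is strictly convex and coercive, so the leader's minimizer $u^{*}$ exists and is unique; then $v^{*}=v^{0}(\cdot;u^{*})$ and the optimal trajectory $z(\cdot,\varepsilon)$ are uniquely determined, and since (\ref{initial-bound-value-pr}) is exactly the combined system of necessary and sufficient optimality conditions of the two nested strictly convex problems, it is uniquely solvable. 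I expect the main obstacle to be this well-posedness step: one must verify that substituting the follower's affine reaction preserves the strict convexity and coercivity of the leader's problem and, equivalently, that the resulting linear two-point problem is nonsingular on the whole interval $[0,t_{f}]$ (i.e. the associated Riccati-type matrix equation has no finite escape). This is exactly the property guaranteed by the linear-quadratic Stackelberg theory of \cite{Basar-Olsder} (Section 7.6) that the statement invokes.
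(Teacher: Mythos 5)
Your route is the right one, and it coincides with the derivation that underlies the paper's entire proof: the paper proves Proposition \ref{Stackelberg-solution} by a one-line appeal to \cite{Basar-Olsder} (Section 7.6), and your two nested applications of the maximum principle plus the convexity argument are exactly what that citation stands for. Your follower step (strict convexity in $v$, the adjoint equation and stationarity relation giving the third line of (\ref{initial-bound-value-pr}) and the second formula in (\ref{u*-v*})), the identification of the leader's problem as an LQ problem in the extended state ${\rm col}(z,\lambda_{\rm v})$ with mixed boundary data, the transversality conditions $\lambda_{\rm u}(t_{f})=0$, $\mu(0)=0$, and the coercivity/strict-convexity argument for existence and uniqueness of $u^{*}$ are all sound. (One routine item you leave implicit: once $u^{*}$, hence $z$ and $\lambda_{\rm v}$, are fixed, uniqueness of the pair $(\lambda_{\rm u},\mu)$ still needs a line; it follows by applying $\frac{d}{dt}\big(\mu^{T}\lambda_{\rm u}\big)=\lambda_{\rm u}^{T}S_{\rm v}\lambda_{\rm u}+\mu^{T}D_{\rm v}\mu\ge 0$ to the difference of two solutions and integrating between the boundary conditions.)

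There is, however, a genuine gap at the one step you assert instead of compute: that your $\mu$-adjoint equation "reproduces the fourth line of (\ref{initial-bound-value-pr})". With $S_{\rm u,v}$ as defined in (\ref{matr-S_u-S_v-Suv}), it does not. Substituting the follower's reaction $v^{0}=-\varepsilon^{-2}B_{\rm v}^{T}\lambda_{\rm v}$ into the leader's cross term gives
\begin{equation*}
\varepsilon^{2}\big(v^{0}\big)^{T}G_{\rm u,v}\,v^{0}
=\frac{1}{\varepsilon^{2}}\,\lambda_{\rm v}^{T}B_{\rm v}G_{\rm u,v}B_{\rm v}^{T}\lambda_{\rm v},
\end{equation*}
so differentiating the leader's Hamiltonian with respect to $\lambda_{\rm v}$ yields
\begin{equation*}
\frac{d\mu}{dt}=S_{\rm v}(t,\varepsilon)\lambda_{\rm u}
-\frac{1}{\varepsilon^{2}}\,B_{\rm v}(t)G_{\rm u,v}(t)B_{\rm v}^{T}(t)\,\lambda_{\rm v}
+A(t)\mu .
\end{equation*}
The coefficient of $\lambda_{\rm v}$ is thus $B_{\rm v}R_{22}^{-1}R_{12}R_{22}^{-1}B_{\rm v}^{T}=\varepsilon^{-2}B_{\rm v}G_{\rm u,v}B_{\rm v}^{T}$ (here $R_{22}=\varepsilon^{2}I_{s}$, $R_{12}=\varepsilon^{2}G_{\rm u,v}$), whereas (\ref{matr-S_u-S_v-Suv}) sets $S_{\rm u,v}(t,\varepsilon)=\varepsilon^{2}B_{\rm v}(t)G_{\rm u,v}(t)B_{\rm v}^{T}(t)$: the two differ by a factor $\varepsilon^{4}$, and they coincide only under the normalization $R_{22}=I_{s}$ used in the textbook formulas of \cite{Basar-Olsder}, which does not hold in this game. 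Consequently your claim "collecting all four ODEs ... yields precisely the boundary-value problem (\ref{initial-bound-value-pr})" is not what the computation delivers; as written, your argument proves the proposition for the boundary-value problem with $\varepsilon^{-2}B_{\rm v}G_{\rm u,v}B_{\rm v}^{T}$ in place of $S_{\rm u,v}$. To prove the statement as it stands you must reconcile this $\varepsilon^{4}$ discrepancy: either exhibit a computation that genuinely produces $S_{\rm u,v}$ from (\ref{matr-S_u-S_v-Suv}) (the forced substitution above leaves no room for one), or record explicitly that your derivation establishes the proposition only with the corrected coefficient in the fourth equation.
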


\section{Asymptotic solution of the
boundary-value problem (\ref{initial-bound-value-pr})}\label{sec4}

\subsection{Transformation of the problem (\ref{initial-bound-value-pr})}

For all $t\in [0,t_{f}],$ let us partition the matrices $A(t)$, $D_{\rm u}(t)$ and $S_{\rm u}(t)$ into blocks as

\begin{eqnarray}\label{matr-A-D_u-S_u-blocks} A(t) = \left(\begin{array}{l}A_{1}(t)\  \  \ A_{2}(t)\\ A_{3}(t)\  \  \ A_{4}(t)\end{array}\right),\ \ \ D_{\rm u}(t) = \left(\begin{array}{l}D_{\rm u,1}(t)\  \  \ D_{\rm u,2}(t)\\
D_{\rm u,2}^{T}(t)\  \  \ D_{\rm u,3}(t)\end{array}\right),\nonumber\\ S_{\rm u}(t) = \left(\begin{array}{l}S_{\rm u,1}(t)\  \  \ S_{\rm u,2}(t)\\
S_{\rm u,2}^{T}(t)\  \  \ S_{\rm u,3}(t)\end{array}\right),
\end{eqnarray}
where the matrices $A_{1}(t)$, $A_{2}(t)$, $A_{3}(t)$, and $A_{4}(t)$ are of the dimensions $(n - s)\times(n - s)$, $(n - s)\times s$, $s\times(n - s)$ and $s\times s$, respectively; the matrices $D_{\rm u,1}(t)$, $D_{\rm u,2}(t)$ and $D_{\rm u,3}(t)$ are of the dimensions $(n - s)\times (n - s)$, $(n - s)\times s$ and $s\times s$, respectively; the matrices $S_{\rm u,1}(t)$, $S_{\rm u,2}(t)$ and $S_{\rm u,3}(t)$ are of the dimensions $(n - s)\times (n - s)$, $(n - s)\times s$ and $s\times s$, respectively; for any $t \in [0,t_{f}]$, the matrices $D_{\rm u,1}(t)$, $D_{\rm u,3}(t)$, $S_{\rm u,1}(t)$ and $S_{\rm u,3}(t)$ are symmetric.

Furthermore, we partition the vector $z_{0}$ into blocks as
\begin{equation}\label{z_0-blocks}z_{0} = \left(\begin{array}{c}z_{01}\\ z_{02}\end{array}\right),\  \  \  \ z_{01} \in \mathbb{R}^{n-s},\  \  \  \ z_{02} \in \mathbb{R}^{s}.
\end{equation}

Due to the expression of $S_{\rm v}(t,\varepsilon)$ (see the equation (\ref{matr-S_u-S_v-Suv})), the right-hand sides of the first and the fourth differential equations in (\ref{initial-bound-value-pr}) are singular for $\varepsilon = 0$. Taking into account this singularity, we look for the components $z(t,\varepsilon)$, $\lambda_{\rm u}(t,\varepsilon)$, $\lambda_{\rm v}(t,\varepsilon)$, and $\mu(t,\varepsilon)$ of the solution to the boundary-value problem (\ref{initial-bound-value-pr}) in the following block form:
\begin{eqnarray}\label{block-form-solution}
z(t,\varepsilon) = \left(\begin{array}{c}z_{1}(t,\varepsilon)\\ z_{2}(t,\varepsilon)\end{array}\right),\  \  \  \   \lambda_{\rm u}(t,\varepsilon)  = \left(\begin{array}{c}\lambda_{\rm u1}(t,\varepsilon)\\ \varepsilon\lambda_{\rm u2}(t,\varepsilon)\end{array}\right),\nonumber\\
 \lambda_{\rm v}(t,\varepsilon)  = \left(\begin{array}{c}\lambda_{\rm v1}(t,\varepsilon)\\ \varepsilon\lambda_{\rm v2}(t,\varepsilon)\end{array}\right),\  \  \  \ \mu(t,\varepsilon)  = \left(\begin{array}{c}\mu_{1}(t,\varepsilon)\\ \mu_{2}(t,\varepsilon)\end{array}\right),\nonumber\\
\end{eqnarray}
where the vectors $z_{1}(t,\varepsilon)$, $\lambda_{\rm u1}(t,\varepsilon)$, $\lambda_{\rm v1}(t,\varepsilon)$ and $\mu_{1}(t,\varepsilon)$ are of the dimension $(n - s)$; the vectors $z_{2}(t,\varepsilon)$, $\lambda_{\rm u2}(t,\varepsilon)$, $\lambda_{\rm v2}(t,\varepsilon)$ and $\mu_{2}(t,\varepsilon)$ are of the dimension $s$.

Substitution of the block forms of the matrices
$D_{\rm v}(t)$, $S_{\rm v}(t,\varepsilon)$, $S_{\rm u,v}(t)$, $A(t)$, $D_{\rm u}(t)$, $S_{\rm u}(t)$ (see the equations (\ref{new-matr-D_v}), (\ref{matr-S_u-S_v-Suv}), (\ref{matr-A-D_u-S_u-blocks})), and the block forms of the vectors $z_{0}$, $z(t)$, $\lambda_{\rm u}(t)$, $\lambda_{\rm v}(t)$, $\mu(t)$
(see the equations (\ref{z_0-blocks}) and (\ref{block-form-solution})) into the boundary-value problem (\ref{initial-bound-value-pr}) yields, after a routine algebra, the following equivalent boundary-value problem in the time interval $[0,t_{f}]$:
\begin{eqnarray}\label{equiv-system}
\frac{dz_{1}(t,\varepsilon)}{dt} = A_{1}(t)z_{1}(t,\varepsilon) + A_{2}(t)z_{2}(t,\varepsilon) - S_{\rm u,1}(t)\lambda_{\rm u1}(t,\varepsilon) - \varepsilon S_{\rm u,2}(t)\lambda_{\rm u2}(t,\varepsilon),\nonumber\\
\varepsilon\frac{dz_{2}(t,\varepsilon)}{dt} = \varepsilon A_{3}(t)z_{1}(t,\varepsilon) + \varepsilon A_{4}(t)z_{2}(t,\varepsilon) - \varepsilon S_{\rm u,2}^{T}(t)\lambda_{\rm u1}(t,\varepsilon)\nonumber\\ - \varepsilon^{2}S_{\rm u,3}(t)\lambda_{\rm u2}(t,\varepsilon) - \lambda_{\rm v2}(t,\varepsilon),\nonumber\\
\frac{d\lambda_{\rm u1}(t,\varepsilon)}{dt} = - D_{\rm u,1}(t)z_{1}(t,\varepsilon) - D_{\rm u,2}(t)z_{2}(t,\varepsilon) - A_{1}^{T}(t)\lambda_{\rm u1}(t,\varepsilon)\nonumber\\
- \varepsilon A_{3}^{T}(t)\lambda_{\rm u2}(t,\varepsilon) + D_{\rm v,1}(t)\mu_{1}(t,\varepsilon),\nonumber\\
\varepsilon\frac{d\lambda_{\rm u2}(t,\varepsilon)}{dt} = - D_{\rm u,2}^{T}(t)z_{1}(t,\varepsilon) - D_{\rm u,3}(t)z_{2}(t,\varepsilon) - A_{2}^{T}(t)\lambda_{\rm u1}(t,\varepsilon)\nonumber\\
- \varepsilon A_{4}^{T}(t)\lambda_{\rm u2}(t,\varepsilon) + D_{\rm v,2}(t)\mu_{2}(t,\varepsilon),\nonumber\\
\frac{d\lambda_{\rm v1}(t,\varepsilon)}{dt} = - D_{\rm v,1}(t)z_{1}(t,\varepsilon) - A_{1}^{T}(t)\lambda_{\rm v1}(t,\varepsilon) - \varepsilon A_{3}^{T}(t)\lambda_{\rm v2}(t,\varepsilon),\nonumber\\
\varepsilon\frac{d\lambda_{\rm v2}(t,\varepsilon)}{dt} = - D_{\rm v,2}(t)z_{2}(t,\varepsilon) - A_{2}^{T}(t)\lambda_{\rm v1}(t,\varepsilon) - \varepsilon A_{4}^{T}(t)\lambda_{\rm v2}(t,\varepsilon),\nonumber\\
\frac{d\mu_{1}(t,\varepsilon)}{dt} = A_{1}(t)\mu_{1}(t,\varepsilon) + A_{2}(t)\mu_{2}(t,\varepsilon),\nonumber\\
\varepsilon\frac{d\mu_{2}(t,\varepsilon)}{dt} = \lambda_{\rm u2}(t,\varepsilon) - \varepsilon^{4}G_{\rm u,v}(t)\lambda_{\rm v2}(t,\varepsilon) + \varepsilon A_{3}(t)\mu_{1}(t,\varepsilon) + \varepsilon A_{4}(t)\mu_{2}(t,\varepsilon),\nonumber\\
\end{eqnarray}
\begin{eqnarray}\label{equiv-cond} z_{1}(0,\varepsilon) = z_{0 1},\  \  \  \ z_{2}(0,\varepsilon) = z_{0 2},\  \  \  \ \lambda_{\rm u1}(t_{f},\varepsilon) = 0,\  \  \  \ \lambda_{\rm u2}(t_{f},\varepsilon) = 0,\nonumber\\
\lambda_{\rm v1}(t_{f},\varepsilon) = 0,\  \  \  \ \lambda_{\rm v2}(t_{f},\varepsilon) = 0,\  \  \  \ \mu_{1}(0,\varepsilon) = 0,\  \  \  \ \mu_{2}(0,\varepsilon) = 0.
\end{eqnarray}

\begin{remark}\label{partitioning-matr-A-D_v} It is important to note that the above made partitioning the matrices $A(t)$, $D_{\rm u}(t)$, $S_{\rm u}(t)$ and the vector $z_{0}$ (see the equations (\ref{matr-A-D_u-S_u-blocks}) and (\ref{z_0-blocks})), as well as the representation of the state variables $z(t)$, $\lambda_{\rm u}(t)$, $\lambda_{\rm v}(t)$, $\mu(t)$ in the block form (see the equation (\ref{block-form-solution})), allow us to partition each equation of the differential system in (\ref{initial-bound-value-pr}) into two considerably different modes (see the equation (\ref{equiv-system})). The first of these modes is with the multiplier $1$ for the derivative, while the second mode is with the small multiplier $\varepsilon > 0$ for the derivative. This means that the state variable, described by the second mode, varies much faster than the state variable, described by the first mode. Therefore, the second mode and the corresponding state variable are called the fast mode and the fast state variable. The first mode and the corresponding state variable are called the slow mode and the slow state variable. Thus, the aforementioned matrices' partitioning and the state variables' representation allow us to convert the boundary-value problem (\ref{initial-bound-value-pr}) into the explicit form of a singularly perturbed boundary-value problem, namely, the problems
(\ref{equiv-system})-(\ref{equiv-cond}).
\end{remark}
\begin{remark}\label{comparison-cheap-leader-follower} It is important to note the following. The singularly perturbed boundary-value problem (\ref{equiv-system})-(\ref{equiv-cond}), arising in the analysis of the Stackelberg differential game with the cheap control of the follower, has four slow modes and four fast modes. This structure of the problem (\ref{equiv-system})-(\ref{equiv-cond}) differs considerably from the structure of the singularly perturbed boundary-value problem, arising in the analysis of the Stackelberg differential game with the cheap control of the leader (see \cite{Glizer-Turetsky-Axioms-2024}). The latter problem has four slow modes and only two fast modes. The difference in the structures of two singularly perturbed boundary-value problems, being the consequence of the hierarchical character of the Stackelberg differential game, yields a considerable difference in the asymptotic analysis of these problems and in the results of this analysis.
\end{remark}

\subsection{First-order formal asymptotic solution of the problem (\ref{equiv-system})-(\ref{equiv-cond})}\label{formal-first-order-sol}

Based on the Boundary Functions Method (see, e.g., \cite{Vasil'eva-Butuzov-Kalachev,Esipova-Diff-Eq}), we look for the first-order asymptotic solution $${\rm col}\big(z_{1}^{1}(t,\varepsilon),z_{2}^{1}(t,\varepsilon),\lambda_{\rm u1}^{1}(t,\varepsilon),\lambda_{\rm u2}^{1}(t,\varepsilon),
\lambda_{\rm v1}^{1}(t,\varepsilon),\lambda_{\rm v2}^{1}(t,\varepsilon),\mu_{1}^{1}(t,\varepsilon),\mu_{2}^{1}(t,\varepsilon)\big)$$ of the problem (\ref{equiv-system})-(\ref{equiv-cond}) in the form
\begin{eqnarray}\label{first-order-asympt-solution}
z_{j}^{1}(t,\varepsilon) = \sum_{k = 0}^{1}\varepsilon^{k}\big[\bar{z}_{j,k}(t) + z_{j,k}^{0}(\xi) + z_{j,k}^{f}(\rho)\big],
\nonumber\\
\lambda_{{\rm u}j}^{1}(t,\varepsilon) = \sum_{k = 0}^{1}\varepsilon^{k}\big[\bar{\lambda}_{{\rm u}j,k}(t) + \lambda_{{\rm u}j,k}^{0}(\xi) + \lambda_{{\rm u}j,k}^{f}(\rho)\big],\nonumber\\
\lambda_{{\rm v}j}^{1}(t,\varepsilon) =
\sum_{k = 0}^{1}\varepsilon^{k}\big[\bar{\lambda}_{{\rm v}j,k}(t) + \lambda_{{\rm v}j,k}^{0}(\xi) + \lambda_{{\rm v}j,k}^{f}(\rho)\big],\nonumber\\
\mu_{j}^{1}(t,\varepsilon) =
\sum_{k = 0}^{1}\varepsilon^{k}\big[\bar{\mu}_{j,k}(t) + \mu_{j,k}^{0}(\xi) + \mu_{j,k}^{f}(\rho)\big],\nonumber\\
\end{eqnarray}
where $(j = 1,2)$,
\begin{eqnarray}\label{xi-rho}\xi = \frac{t}{\varepsilon},\  \  \  \  \ \rho = \frac{t - t_{f}}{\varepsilon}.\end{eqnarray}

\begin{remark}\label{explanation-zero-order-solution} In the equation (\ref{first-order-asympt-solution}), the terms with the overbar constitute the so-called outer solution, the terms with the superscript $0$ are the boundary corrections in the right-hand neighborhood of $t = 0$, the terms with the superscript $f$ are the boundary corrections in the left-hand neighborhood of $t = t_{f}$. Equations and boundary conditions for the asymptotic solution terms are obtained by substituting $z_{j}^{1}(t,\varepsilon)$, $\lambda_{{\rm u}j}^{1}(t,\varepsilon)$,
$\lambda_{{\rm v}j}^{1}(t,\varepsilon)$ and $\mu_{j}^{1}(t,\varepsilon)$, $(j = 1,2)$ from
(\ref{first-order-asympt-solution}) into the problem (\ref{equiv-system})-(\ref{equiv-cond}) instead of $z_{j}(t)$, $\lambda_{{\rm u}j}(t)$, $\lambda_{{\rm v}j}(t)$ and
$\mu_{j}(t)$, $(j = 1,2)$, respectively, and equating the coefficients for the same power of $\varepsilon$ on both sides of the resulting equations, separately depending on $t$, on $\xi$, and on $\rho$. Additionally, it should be noted the following. If $\varepsilon \rightarrow + 0$, then, for any $t \in (0,t_{f}]$, $\xi \rightarrow + \infty$, while, for any $t \in [0,t_{f})$, $\rho \rightarrow - \infty$.
\end{remark}

\subsubsection{Obtaining the boundary corrections $z_{1,0}^{0}(\xi)$, $\lambda_{{\rm u}1,0}^{0}(\xi)$, $\lambda_{{\rm v}1,0}^{0}(\xi)$ and $\mu_{1,0}^{0}(\xi)$}\label{boundary-correction-1-0}\label{left-zero-bound-corr}

These boundary corrections satisfy the differential equations
\begin{eqnarray}\label{eq-for-bound-corr-l}
\frac{dz_{1,0}^{0}(\xi)}{d\xi} = 0,\
 \  \  \ \xi \ge 0,\nonumber\\
\frac{d\lambda_{{\rm u}1,0}^{0}(\xi)}{d\xi} = 0,\  \  \  \ \xi \ge 0,\nonumber\\
\frac{d\lambda_{{\rm v}1,0}^{0}(\xi)}{d\xi} = 0,\
 \  \  \ \xi \ge 0,\nonumber\\
\frac{d\mu_{1,0}^{0}(\xi)}{d\xi} = 0,\  \  \  \ \xi \ge 0.\nonumber\\
\end{eqnarray}

To obtain the unique solutions of these equations, we should give some additional conditions for the unknown functions. By virtue of the Boundary Functions Method, these conditions are
\begin{equation}\label{cond-for-bound-corr-l}
\lim_{\xi \rightarrow + \infty}z_{1,0}^{0}(\xi)= 0,\  \ \lim_{\xi \rightarrow + \infty}\lambda_{{\rm u}1,0}^{0}(\xi)= 0,\  \
\lim_{\xi \rightarrow + \infty}\lambda_{{\rm v}1,0}^{0}(\xi)= 0,\  \ \lim_{\xi \rightarrow + \infty}\mu_{1 ,0}^{0}(\xi)= 0.\end{equation}

The set of the equations (\ref{eq-for-bound-corr-l}) subject to the conditions (\ref{cond-for-bound-corr-l}) has the unique solution
\begin{equation}\label{bound-corr-1}z_{1,0}^{0}(\xi) \equiv 0,\  \  \ \lambda_{{\rm u}1,0}^{0}(\xi) \equiv 0,\  \  \ \lambda_{{\rm v}1,0}^{0}(\xi) \equiv 0,\  \  \ \mu_{1,0}^{0}(\xi) \equiv 0,\  \  \ \xi \ge 0.
\end{equation}

\subsubsection{Obtaining the boundary corrections $z_{1,0}^{f}(\rho)$, $\lambda_{{\rm u}1,0}^{f}(\rho)$, $\lambda_{{\rm v}1,0}^{f}(\rho)$ and $\mu_{1,0}^{f}(\eta)$}\label{boundary-corrections-f-0}

For these boundary corrections, we obtain the following differential equations:
\begin{eqnarray}\label{eq-for-bound-corr-2}
\frac{dz_{1,0}^{f}(\rho)}{d\rho} = 0,\
 \  \  \ \rho \le 0,\nonumber\\
\frac{d\lambda_{{\rm u}1,0}^{f}(\rho)}{d\rho} = 0,\  \  \  \ \rho \le 0,\nonumber\\
\frac{d\lambda_{{\rm v}1,0}^{f}(\rho)}{d\rho} = 0,\  \  \  \ \rho \le 0,\nonumber\\
\frac{d\mu_{1,0}^{f}(\rho)}{d\rho} = 0,\  \  \  \ \rho \le 0.\nonumber\\
\end{eqnarray}

Due to the Boundary Functions Method, we require that
\begin{equation}\label{cond-for-bound-corr-2}
\lim_{\rho \rightarrow - \infty}z_{1,0}^{f}(\rho)= 0,\  \ \lim_{\rho \rightarrow - \infty}\lambda_{{\rm u}1,0}^{f}(\rho)= 0,\  \
\lim_{\rho \rightarrow - \infty}\lambda_{{\rm v}1,0}^{f}(\rho)= 0,\  \ \lim_{\rho \rightarrow - \infty}\mu_{1 ,0}^{f}(\rho)= 0.\end{equation}

Thus, the set of the equations (\ref{eq-for-bound-corr-2}) subject to the conditions (\ref{cond-for-bound-corr-2}) yields the unique solution
\begin{equation}\label{bound-corr-2}z_{1,0}^{f}(\rho) \equiv 0,\  \  \ \lambda_{{\rm u}1,0}^{f}(\rho) \equiv 0,\  \  \ \lambda_{{\rm v}1,0}^{f}(\rho) \equiv 0,\  \  \ \mu_{1,0}^{f}(\rho) \equiv 0,\  \  \ \rho
 \le 0.
\end{equation}

\subsubsection{Obtaining the outer solution terms $\bar{z}_{j,0}(t)$, $\bar{\lambda}_{{\rm u}j,0}(t)$, $\bar{\lambda}_{{\rm v}j,0}(t)$, $\bar{\mu}_{j0}(t)$, $j = 1,2$}\label{outer-solution-zero-order}

Using Remark \ref{explanation-zero-order-solution} and the equations (\ref{bound-corr-1}),(\ref{bound-corr-2}), we obtain the following differential-algebraic system of equations for the outer solution terms $\bar{z}_{j,0}(t)$, $\bar{\lambda}_{{\rm u}j,0}(t)$, $\bar{\lambda}_{{\rm v}j,0}(t)$, $\bar{\mu}_{j0}(t)$, $(j = 1,2)$ in the time interval $[0,t_{f}]$:
\begin{eqnarray}\label{outer-solution-system}
\frac{d\bar{z}_{1,0}(t)}{dt} =
A_{1}(t)\bar{z}_{1,0}(t) + A_{2}(t)\bar{z}_{2,0}(t) - S_{\rm u,1}(t)\bar{\lambda}_{\rm u1,0}(t),\nonumber\\
\bar{\lambda}_{\rm v2,0}(t) = 0,\nonumber\\
\frac{d\bar{\lambda}_{\rm u1,0}(t)}{dt} = - D_{\rm u,1}(t)\bar{z}_{1,0}(t) - D_{\rm u,2}(t)\bar{z}_{2,0}(t) - A_{1}^{T}(t)\bar{\lambda}_{\rm u1,0}(t) + D_{\rm v,1}(t)\bar{\mu}_{1,0}(t),\nonumber\\
0 = - D_{\rm u,2}^{T}(t)\bar{z}_{1,0}(t) - D_{\rm u,3}(t)\bar{z}_{2,0}(t) - A_{2}^{T}(t)\bar{\lambda}_{\rm u1,0}(t)  + D_{\rm v,2}(t)\bar{\mu}_{2,0}(t),\nonumber\\
\frac{d\bar{\lambda}_{\rm v1,0}(t)}{dt} = - D_{\rm v,1}(t)\bar{z}_{1,0}(t) - A_{1}^{T}(t)\bar{\lambda}_{\rm v1,0}(t),\nonumber\\
0 = - D_{\rm v,2}(t)\bar{z}_{2,0}(t) - A_{2}^{T}(t)\bar{\lambda}_{\rm v1,0}(t),\nonumber\\
\frac{d\bar{\mu}_{1,0}(t)}{dt} = A_{1}(t)\bar{\mu}_{1,0}(t) + A_{2}(t)\bar{\mu}_{2,0}(t),\nonumber\\
\bar{\lambda}_{\rm u2,0}(t) = 0,
\end{eqnarray}
\begin{eqnarray}\label{outer-solution-cond} \bar{z}_{1,0}(0) = z_{0 1},\  \  \  \ \bar{\lambda}_{\rm u1,0}(t_{f}) = 0,\  \  \  \ \bar{\lambda}_{\rm v1,0}(t_{f}) = 0,\  \  \  \ \bar{\mu}_{1,0}(0) = 0.\end{eqnarray}

\begin{remark}\label{no-cond-for-two-variables} The system (\ref{outer-solution-system}) consists of four differential equations with respect to $\bar{z}_{1,0}(t)$, $\bar{\lambda}_{\rm u1,0}(t)$, $\bar{\lambda}_{\rm v1,0}(t)$, $\bar{\mu}_{1,0}(t)$ and four algebraic equations with respect to $\bar{\lambda}_{\rm v2,0}(t)$, $\bar{z}_{2,0}(t)$, $\bar{\lambda}_{\rm u2,0}(t)$, $\bar{\mu}_{2,0}(t)$. Therefore, in (\ref{outer-solution-cond}), boundary conditions for $\bar{\lambda}_{\rm v2,0}(t)$, $\bar{z}_{2,0}(t)$, $\bar{\lambda}_{\rm u2,0}(t)$, $\bar{\mu}_{2,0}(t)$ are absent.\end{remark}

Resolving the sixth equation in the system (\ref{outer-solution-system}) with respect to $\bar{z}_{2,0}(t)$ and taking into account the invertibility of the matrix $D_{\rm v,2}(t)$ for all $t \in [0,t_{f}]$, we obtain
\begin{equation}\label{bar-z_2}\bar{z}_{2,0}(t) = - D_{\rm v,2}^{-1}(t)A_{2}^{T}(t)\bar{\lambda}_{\rm v1,0}(t),\  \  \  \ t \in [0,t_{f}].
\end{equation}

Substituting (\ref{bar-z_2}) into the fourth equation of the system (\ref{outer-solution-system}) and solving the resulting equation with respect to $\bar{\mu}_{2,0}(t)$ yield
\begin{eqnarray}\label{bar-mu_20}\bar{\mu}_{2,0}(t) = D_{\rm v,2}^{-1}(t)\big[D_{\rm u,2}^{T}(t)\bar{z}_{1,0}(t) - D_{\rm u,3}(t)D_{\rm v,2}^{-1}(t)A_{2}^{T}(t)\bar{\lambda}_{\rm v1,0}(t)\nonumber\\ + A_{2}^{T}(t)\bar{\lambda}_{\rm u1,0}(t)\big],\  \  \  \ t \in [0,t_{f}].
\end{eqnarray}

Finally, substitution of (\ref{bar-z_2})-(\ref{bar-mu_20}) into the first, third and seventh equations of the  system (\ref{outer-solution-system}) and use of the fifth equation of this system yield after a routine algebra the following set of four differential equations with respect to $\bar{z}_{1,0}(t)$, $\bar{\lambda}_{\rm u1,0}(t)$, $\bar{\lambda}_{\rm v1,0}(t)$, $\bar{\mu}_{1,0}(t)$:
\begin{eqnarray}\label{set-four-diff-eq-outer-sol}
\frac{d\bar{z}_{1,0}(t)}{dt} = A_{1}(t)\bar{z}_{1,0}(t) - S_{\rm u,1}(t)\bar{\lambda}_{\rm u1,0}(t) - A_{2}(t)D_{\rm v,2}^{-1}(t)A_{2}^{T}(t)\bar{\lambda}_{\rm v1,0}(t),\nonumber\\
\frac{d\bar{\lambda}_{\rm u1,0}(t)}{dt} = - D_{\rm u,1}(t)\bar{z}_{1,0}(t) - A_{1}^{T}(t)\bar{\lambda}_{\rm u1,0}(t) + D_{\rm u,2}(t)D_{\rm v,2}^{-1}(t)A_{2}^{T}(t)\bar{\lambda}_{\rm v1,0}(t)\nonumber\\ + D_{\rm v,1}(t)\bar{\mu}_{\rm 1,0}(t),\nonumber\\
\frac{d\bar{\lambda}_{\rm v1,0}(t)}{dt} = - D_{\rm v,1}(t)\bar{z}_{1,0}(t) -
A_{1}^{T}(t)\bar{\lambda}_{\rm v1,0}(t),\nonumber\\
\frac{d\bar{\mu}_{1,0}(t)}{dt} = A_{2}(t)D_{\rm v,2}^{-1}(t)D_{\rm u,2}^{T}(t)\bar{z}_{1,0}(t) + A_{2}(t)D_{\rm v,2}^{-1}(t)A_{2}^{T}(t)\bar{\lambda}_{\rm u1,0}(t)\nonumber\\
- A_{2}(t)D_{\rm v,2}^{-1}(t)D_{u,3}(t)D_{\rm v,2}^{-1}(t)A_{2}^{T}(t)\bar{\lambda}_{\rm v1,0}(t) + A_{1}(t)\bar{\mu}_{1,0}(t).\nonumber\\
\end{eqnarray}

The system of the differential equations (\ref{set-four-diff-eq-outer-sol}) is subject to the boundary conditions  (\ref{outer-solution-cond}).

\begin{remark}\label{dimension-reduced-syst} Let us note the following. The dimension of each of the state variables $z(t)$, $\lambda_{\rm u}(t)$, $\lambda_{\rm v}(t)$, $\mu(t)$ of the differential system in (\ref{initial-bound-value-pr}) is $n$. The dimension of the state variables $\bar{z}_{1,0}(t)$, $\bar{\lambda}_{\rm u1,0}(t)$, $\bar{\lambda}_{\rm v1,0}(t)$, $\bar{\mu}_{1,0}$ of the differential system (\ref{set-four-diff-eq-outer-sol}) is $n - s$. Hence, the differential system (\ref{set-four-diff-eq-outer-sol}) is of a considerably lower dimension than the differential system in (\ref{initial-bound-value-pr}). Moreover, in contrast with the system in (\ref{initial-bound-value-pr}), the system (\ref{set-four-diff-eq-outer-sol}) is independent of $\varepsilon$.
\end{remark}

In what follows, we assume.\\
{\bf (A7)} The boundary-value problem (\ref{set-four-diff-eq-outer-sol}),(\ref{outer-solution-cond}) has the unique solution\\ ${\rm col}\big(\bar{z}_{1,0}(t),\bar{\lambda}_{\rm u1,0}(t),\bar{\lambda}_{\rm v1,0}(t),\bar{\mu}_{1,0}(t)\big)$,\  \ $t \in [0,t_{f}]$.

Thus, the solution of the boundary-value problem (\ref{set-four-diff-eq-outer-sol}),(\ref{outer-solution-cond}), along with $\bar{\lambda}_{\rm v 2,0}(t) = 0$, $\bar{\lambda}_{\rm u 2,0}(t) = 0$ (see the second and the eighth  equations in (\ref{outer-solution-system})) and $\bar{z}_{2,0}(t)$, $\bar{\mu}_{2,0}(t)$ (see the equations (\ref{bar-z_2})-(\ref{bar-mu_20})), constitutes the zero-order (with respect to $\varepsilon$) outer solution.

\subsubsection{Control-theoretic interpretation of the boundary-value problem (\ref{set-four-diff-eq-outer-sol}),(\ref{outer-solution-cond})}

Consider the following system of differential equations in the time interval $[0,t_{f}]$:
\begin{eqnarray}\label{diff-eqs-control-interpr}
\frac{d\bar{z}_{1}(t)}{dt} = A_{1}(t)\bar{z}_{1}(t) - A_{2}(t)D_{\rm v,2}^{-1}(t)A_{2}^{T}(t)\bar{\lambda}_{\rm v1}(t) + B_{\rm u,1}(t)\bar{u}(t),\nonumber\\
\frac{d\bar{\lambda}_{\rm v1}(t)}{dt} = - D_{\rm v,1}(t)\bar{z}_{1}(t) - A_{1}^{T}(t)\bar{\lambda}_{\rm v1}(t),\nonumber\\
\end{eqnarray}
where $\bar{z}_{1}(t) \in \mathbb{R}^{n - s}$ and $\bar{\lambda}_{\rm v1}(t) \in \mathbb{R}^{n - s}$ are state variables; $\bar{u}(t) \in \mathbb{R}^{r}$ is a control variable; the vector-valued function $\bar{u}(t)$ is assumed to be continuous in the interval $[0,t_{f}]$; $B_{\rm u,1}(t)$ is the upper block of the matrix $B_{\rm u}(t)$ of the dimension $(n - s)\times r$.

The system (\ref{diff-eqs-control-interpr}) is subject to the boundary conditions
\begin{equation}\label{bound-cond-control-interpr} \bar{z}_{1}(0) = z_{0 1},\  \  \  \ \bar{\lambda}_{\rm v1}(t_{f}) = 0.\end{equation}

The control variable $\bar{u}(t)$ in the boundary-value problem (\ref{diff-eqs-control-interpr})-(\ref{bound-cond-control-interpr}) is evaluated by the performance index
\begin{eqnarray}\label{perform-index-control-interpr}\bar{J}\big(\bar{u}(t)\big) \stackrel{\triangle}{=} \frac{1}{2}\int_{0}^{t_{f}}\big[\bar{z}_{1}^{T}(t)D_{\rm u,1}(t)\bar{z}_{1}(t) - 2\bar{z}_{1}^{T}(t)D_{\rm u,2}(t)D_{\rm v,2}^{-1}(t)A_{2}^{T}(t)\bar{\lambda}_{\rm v1}(t)\nonumber\\
+ \bar{\lambda}_{\rm v1}^{T}(t)A_{2}(t)D_{\rm v,2}^{-1}(t)D_{\rm u,3}(t)D_{\rm v,2}^{-1}(t)A_{2}^{T}(t)\bar{\lambda}_{\rm v1}(t)
+ \bar{u}^{T}(t)\bar{u}(t)\big]dt \rightarrow \min_{\bar{u}(t)}.
\end{eqnarray}

\begin{proposition}\label{solution-auxiliary-ocp} Let the assumptions (A1)-(A7) be valid. Then, the optimal control problem (\ref{diff-eqs-control-interpr})-(\ref{bound-cond-control-interpr}),(\ref{perform-index-control-interpr}) has the unique open-loop optimal control
\begin{eqnarray}\label{opt-contr-bar-w} \bar{u}^{*}(t) = - B_{\rm u,1}(t)\bar{\lambda}_{\rm u1,0}(t),\  \  \  \ t \in [0,t_{f}],
\end{eqnarray}
where $\bar{\lambda}_{\rm u1,0}(t)$ is the corresponding component of the solution to the boundary-value problem (\ref{set-four-diff-eq-outer-sol}),(\ref{outer-solution-cond}). The optimal value $\bar{J}_{\rm u}^{*}$ of the functional in the optimal control problem (\ref{diff-eqs-control-interpr})-(\ref{bound-cond-control-interpr}),(\ref{perform-index-control-interpr}) has the form
\begin{eqnarray}\label{bar-J_u*}\bar{J}_{\rm u}^{*} = \frac{1}{2}\int_{0}^{t_{f}}\big[\bar{z}_{1,0}^{T}(t)D_{\rm u,1}(t)\bar{z}_{1,0}(t) - 2\bar{z}_{1,0}^{T}(t)D_{\rm u,2}(t)D_{\rm v,2}^{-1}(t)A_{2}^{T}(t)\bar{\lambda}_{\rm v1,0}(t)\nonumber\\
+ \bar{\lambda}_{\rm v1,0}^{T}(t)A_{2}(t)D_{\rm v,2}^{-1}(t)D_{\rm u,3}(t)D_{\rm v,2}^{-1}(t)A_{2}^{T}(t)\bar{\lambda}_{\rm v1,0}(t)
+ \bar{\lambda}_{\rm u1,0}^{T}(t)S_{\rm u,1}(t)\bar{\lambda}_{\rm u1,0}(t)\big]dt,\nonumber
\end{eqnarray}
where $\bar{z}_{1,0}(t)$ and $\bar{\lambda}_{\rm v1,0}(t)$ are the corresponding components of the solution to the boundary-value problem (\ref{set-four-diff-eq-outer-sol}),(\ref{outer-solution-cond}).
\end{proposition}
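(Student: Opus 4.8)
The plan is to recognize the problem (\ref{diff-eqs-control-interpr})--(\ref{bound-cond-control-interpr}),(\ref{perform-index-control-interpr}) as a standard linear-quadratic optimal control problem in which the pair $x(t) = {\rm col}\big(\bar{z}_{1}(t),\bar{\lambda}_{\rm v1}(t)\big)$ plays the role of the state vector, $\bar{u}(t)$ is the control, and the boundary data are split between the two endpoints: $\bar{z}_{1}$ is prescribed at $t=0$ while $\bar{\lambda}_{\rm v1}$ is prescribed at $t=t_{f}$. The optimality conditions will be derived by the Pontryagin Maximum Principle (equivalently, the classical calculus of variations), and the whole argument rests on the observation that the resulting two-point boundary-value problem is \emph{identical} to (\ref{set-four-diff-eq-outer-sol}),(\ref{outer-solution-cond}); its unique solvability is then supplied directly by assumption (A7).

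First I would establish that the problem is convex, which guarantees that the first-order conditions are also sufficient for a global minimum and that well-posedness (existence and uniqueness of the minimizer) holds. The control weight is $I_{r}>0$, so it remains to show that the state-dependent part of the integrand in (\ref{perform-index-control-interpr}) is a positive semi-definite quadratic form in $x(t)$. Introducing the auxiliary variable $w = D_{\rm v,2}^{-1}(t)A_{2}^{T}(t)\bar{\lambda}_{\rm v1}(t)$ and using the symmetry of $D_{\rm v,2}$, that part becomes ${\rm col}(\bar{z}_{1},w)^{T}\,\widetilde{D}_{\rm u}\,{\rm col}(\bar{z}_{1},w)$ with $\widetilde{D}_{\rm u} = {\rm diag}(I,-I)\,D_{\rm u}\,{\rm diag}(I,-I)$. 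Since $D_{\rm u}(t)\ge 0$ (by (A2) and (\ref{new-matr-D_u})) and $\widetilde{D}_{\rm u}$ is a congruence transform of $D_{\rm u}$, it is positive semi-definite; composing with the linear change $\bar{\lambda}_{\rm v1}\mapsto w$ preserves the sign. Hence the cost is convex in $(x,\bar{u})$ and strictly convex in $\bar{u}$.

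Next I would form the Hamiltonian with adjoint variables $\bar{\lambda}_{\rm u1}(t)$ conjugate to $\bar{z}_{1}(t)$ and $\bar{\mu}_{1}(t)$ conjugate to $\bar{\lambda}_{\rm v1}(t)$. The stationarity condition $\partial H/\partial\bar{u}=0$ yields $\bar{u}^{*}(t) = -B_{\rm u,1}^{T}(t)\bar{\lambda}_{\rm u1}(t)$; substituting it into the state dynamics and using $S_{\rm u,1}=B_{\rm u,1}B_{\rm u,1}^{T}$ reproduces the first and third equations of (\ref{set-four-diff-eq-outer-sol}), while the adjoint equations $d\bar{\lambda}_{\rm u1}/dt=-\partial H/\partial\bar{z}_{1}$ and $d\bar{\mu}_{1}/dt=-\partial H/\partial\bar{\lambda}_{\rm v1}$ reproduce the second and fourth equations (here the symmetry of $D_{\rm v,1}$ and $D_{\rm u,3}$ is used). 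The transversality conditions, namely $\bar{\lambda}_{\rm u1}(t_{f})=0$ because $\bar{z}_{1}$ is free at $t_{f}$ and $\bar{\mu}_{1}(0)=0$ because $\bar{\lambda}_{\rm v1}$ is free at $t=0$, together with the prescribed data (\ref{bound-cond-control-interpr}), give exactly (\ref{outer-solution-cond}).

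Finally, by (A7) this boundary-value problem has a unique solution, whose components $\bar{\lambda}_{\rm u1,0}(t)$, $\bar{z}_{1,0}(t)$, $\bar{\lambda}_{\rm v1,0}(t)$ then produce, via convexity, the unique optimal control (\ref{opt-contr-bar-w}). The optimal value follows by inserting $\bar{u}^{*}=-B_{\rm u,1}^{T}\bar{\lambda}_{\rm u1,0}$ into (\ref{perform-index-control-interpr}) and replacing $\bar{u}^{*T}\bar{u}^{*}=\bar{\lambda}_{\rm u1,0}^{T}B_{\rm u,1}B_{\rm u,1}^{T}\bar{\lambda}_{\rm u1,0}=\bar{\lambda}_{\rm u1,0}^{T}S_{\rm u,1}\bar{\lambda}_{\rm u1,0}$, which is precisely the claimed expression for $\bar{J}_{\rm u}^{*}$. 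I expect the convexity verification, in particular confirming that the indefinite-looking cross term leaves the state weight positive semi-definite, to be the main obstacle, since the sufficiency of the first-order conditions and the uniqueness of the minimizer hinge on it; the remaining identification of the two boundary-value problems is routine though lengthy algebra.
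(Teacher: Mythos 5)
Your proposal is correct and follows essentially the same route as the paper: your ${\rm diag}(I,-I)$-congruence argument for positive semi-definiteness of the state weight is exactly the paper's displayed identity expressing that quadratic form as $D_{\rm u}(t)$ evaluated at the vector ${\rm col}\big(\bar{z}_{1}(t),\,-D_{\rm v,2}^{-1}(t)A_{2}^{T}(t)\bar{\lambda}_{\rm v1}(t)\big)$, and your explicit Hamiltonian/transversality computation simply spells out what the paper delegates to its citations of Ioffe--Tikhomirov (existence) and Bryson--Ho (optimality conditions), arriving at the same identification with the boundary-value problem (\ref{set-four-diff-eq-outer-sol}),(\ref{outer-solution-cond}) and the same use of (A7). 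A minor point in your favor: your stationarity condition $\bar{u}^{*}(t)=-B_{\rm u,1}^{T}(t)\bar{\lambda}_{\rm u1,0}(t)$ carries the transpose, which is the dimensionally correct form (the proposition's displayed formula omits it, evidently a typo; compare (\ref{suboptim-controls})).
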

\begin{proof}
First of all, let us observe the following. Due to the assumption (A2) and the equation (\ref{new-matr-D_u}), the matrix $D_{\rm u}(t)$ is positive semi-definite for all $t \in [0,t_{f}]$. Using the block form of $D_{\rm u}(t)$ (see the equation (\ref{matr-A-D_u-S_u-blocks})), the quadratic form of the state variables in the functional $\bar{J}\big(\bar{u}(t)\big)$ can be represented as:
\begin{eqnarray}\label{quadratic-form-states}\bar{z}_{1}^{T}(t)D_{\rm u,1}(t)\bar{z}_{1}(t) - 2\bar{z}_{1}^{T}(t)D_{\rm u,2}(t)D_{\rm v,2}^{-1}(t)A_{2}^{T}(t)\bar{\lambda}_{\rm v1}(t)\nonumber\\
+ \bar{\lambda}_{\rm v1}^{T}(t)A_{2}(t)D_{\rm v,2}^{-1}(t)D_{\rm u,3}(t)D_{\rm v,2}^{-1}(t)A_{2}^{T}(t)\bar{\lambda}_{\rm v1}(t)\nonumber\\
= \Big(\bar{z}_{1}^{T}(t)\ ,\ - \bar{\lambda}_{\rm v1}^{T}(t)A_{2}(t)D_{\rm v,2}^{-1}(t)\Big)D_{\rm u}(t)\left(\begin{array}{l}\  \  \ \bar{z}_{1}(t)\\ - D_{\rm v,2}^{-1}(t)A_{2}^{T}(t)\lambda_{\rm v1}(t)\end{array}\right),\  \  \  \ t \in [0,t_{f}].\nonumber
\end{eqnarray}

Hence, this quadratic form is positive semi-definite for all $t \in [0,t_{f}]$.
Moreover, the quadratic form of the control variable in the functional $\bar{J}\big(\bar{u}(t)\big)$ is positive definite for all $t \in [0,t_{f}]$. Using this observation and
based on the results of \cite{Ioffe-Tikhomirov} (Section 9.2, the existence of a solution to an optimal control problem) and \cite{Bryson-Ho} (Section 2.5, the control optimality conditions), we directly obtain the statements of the proposition.
\end{proof}

\begin{remark}\label{new-form-bar-J_u^*} Using Proposition \ref{solution-auxiliary-ocp} and the equations (\ref{matr-A-D_u-S_u-blocks}),(\ref{bar-z_2}), we can rewrite the expression for the optimal value $\bar{J}_{\rm u}^{*}$ of the functional in the optimal control problem (\ref{diff-eqs-control-interpr})-(\ref{bound-cond-control-interpr}),(\ref{perform-index-control-interpr}) as:
\begin{eqnarray}\label{new-form-optim-value-funct}\bar{J}_{\rm u}^{*} = \frac{1}{2}\int_{0}^{t_{f}}\big[\bar{z}_{0}^{T}(t)D_{\rm u}(t)\bar{z}_{0}(t) + \bar{\lambda}_{\rm u1,0}^{T}(t)S_{\rm u,1}(t)\bar{\lambda}_{\rm u1,0}(t)\big]dt,\nonumber
\end{eqnarray}
\end{remark}
where $\bar{z}_{0}(t) = {\rm col}\big(\bar{z}_{1,0}(t) , \bar{z}_{2,0}(t)\big)$, $t \in [0,t_{f}]$.

\subsubsection{Obtaining the boundary corrections $z_{2,0}^{0}(\xi)$, $\lambda_{\rm u 2,0}^{0}(\xi)$, $\lambda_{\rm v 2,0}^{0}(\xi)$, $\mu_{2,0}^{0}(\xi)$ and $z_{2,0}^{f}(\rho)$, $\lambda_{\rm u 2,0}^{f}(\rho)$, $\lambda_{\rm v 2,0}^{f}(\rho)$, $\mu_{2,0}^{f}(\rho)$}\label{obtaining-bound-corr-3-4}

Due to Remark \ref{explanation-zero-order-solution} and the equations (\ref{bound-corr-1}),(\ref{bound-corr-2}), we have the following differential equations for these boundary corrections:
\begin{eqnarray}\label{eq-bound-corr-3} \frac{dz_{2,0}^{0}(\xi)}{d\xi} = - \lambda_{\rm v 2,0}^{0}(\xi),\  \  \  \ \xi \ge 0,\nonumber\\
\frac{d\lambda_{\rm u 2,0}^{0}(\xi)}{d\xi} = - D_{\rm u,3}(0)z_{2,0}^{0}(\xi) + D_{\rm v,2}(0)\mu_{2,0}^{0}(\xi),\  \  \  \ \xi \ge 0,\nonumber\\
\frac{d\lambda_{\rm v2,0}(\xi)}{d\xi} = -D_{\rm v,2}(0)z_{2,0}^{0}(\xi),\  \  \  \ \xi \ge 0,\nonumber\\
\frac{d\mu_{2,0}^{0}(\xi)}{d\xi} = \lambda_{\rm u2,0}^{0}(\xi),\  \  \  \ \xi \ge 0.\nonumber\\
\end{eqnarray}
\begin{eqnarray}\label{eq-bound-corr-4} \frac{dz_{2,0}^{f}(\rho)}{d\rho} = - \lambda_{\rm v 2,0}^{f}(\rho),\  \  \  \ \rho \le 0,\nonumber\\
\frac{d\lambda_{\rm u 2,0}^{f}(\rho)}{d\rho} = - D_{\rm u,3}(t_{f})z_{2,0}^{f}(\rho) + D_{\rm v,2}(t_{f})\mu_{2,0}^{f}(\rho),\  \  \  \ \rho \le 0,\nonumber\\
\frac{d\lambda_{\rm v2,0}^{f}(\rho)}{d\rho} = - D_{\rm v,2}(t_{f})z_{2,0}^{f}(\rho),\  \  \  \ \rho \le 0,\nonumber\\
\frac{d\mu_{2,0}^{f}(\rho)}{d\rho} = \lambda_{\rm u2,0}^{f}(\rho),\  \  \  \ \rho \le 0.\nonumber\\
\end{eqnarray}

By virtue of the Boundary Functions Method \cite{Vasil'eva-Butuzov-Kalachev,Esipova-Diff-Eq}, we require (similarly to (\ref{cond-for-bound-corr-l}),(\ref{cond-for-bound-corr-2})) that the solutions of the systems (\ref{eq-bound-corr-3}),(\ref{eq-bound-corr-4}) satisfy the conditions
\begin{equation}\label
{additional-cond-for-corr-3} \lim_{\xi \rightarrow + \infty}z_{2,0}^{0}(\xi) = 0,\  \  \lim_{\xi \rightarrow + \infty}\lambda_{\rm u 2,0}^{0}(\xi) = 0,\  \ \lim_{\xi \rightarrow + \infty}\lambda_{\rm v 2,0}^{0}(\xi) = 0,\ \ \lim_{\xi \rightarrow + \infty}\mu_{2,0}^{0}(\xi) = 0,\end{equation}
\begin{equation}\label
{additional-cond-for-corr-4} \lim_{\rho \rightarrow - \infty}z_{2,0}^{f}(\rho) = 0,\  \  \lim_{\rho \rightarrow - \infty}\lambda_{\rm u 2,0}^{f}(\rho) = 0,\  \ \lim_{\rho \rightarrow - \infty}\lambda_{\rm v 2,0}^{f}(\rho) = 0,\ \ \lim_{\rho \rightarrow - \infty}\mu_{2,0}^{f}(\rho) = 0.\end{equation}

Substituting the expressions for $z_{2}^{1}(t,\varepsilon)$ and $\mu_{2}^{1}(t,\varepsilon)$ (see the equation (\ref{first-order-asympt-solution})) into the initial conditions for $z_{2}(t,\varepsilon)$ and $\mu_{2}(t,\varepsilon)$, respectively, (see the equation (\ref{equiv-cond})), equating the coefficients for $\varepsilon$ in the power of $0$ on both sides of the resulting equations and using the first and the fourth limit equalities in (\ref{additional-cond-for-corr-4}), we obtain additional conditions for $z_{2,0}^{0}(\xi)$ and $\mu_{2,0}^{0}(\xi)$, namely,
\begin{equation}\label{init-cond-for-z_2^0-mu_2^0} z_{2,0}^{0}(0) = z_{0 2} - \bar{z}_{2,0}(0),\  \  \ \ \mu_{2,0}^{0}(0) = - \bar{\mu}_{2,0}(0).\end{equation}

Similarly, substituting the expressions for $\lambda_{\rm u2}^{1}(t,\varepsilon)$ and $\lambda_{\rm v2}^{1}(t,\varepsilon)$ (see the equation (\ref{first-order-asympt-solution})) into the terminal conditions for $\lambda_{\rm u2}(t,\varepsilon)$ and $\lambda_{\rm v2}(t,\varepsilon)$, respectively, (see the equation (\ref{equiv-cond})), equating the coefficients for $\varepsilon$ in the power of $0$ on both sides of the resulting equations, using the second and the third limit equalities in (\ref{additional-cond-for-corr-3}) and that $\bar{\lambda}_{\rm u 2,0}(t) \equiv 0$, $\bar{\lambda}_{\rm v 2,0}(t) \equiv 0$, we obtain additional conditions for $\lambda_{\rm u 2,0}^{f}(\rho)$, $\lambda_{\rm v 2,0}^{f}(\rho)$, namely,
\begin{equation}\label{termin-cond-for-lambda_u2^f} \lambda_{\rm u 2,0}^{f}(0) = 0,\  \  \  \ \lambda_{\rm v 2,0}^{f}(0) = 0.\end{equation}

Proceed to solution of the system (\ref{eq-bound-corr-3}) subject to the conditions (\ref{additional-cond-for-corr-3}) and (\ref{init-cond-for-z_2^0-mu_2^0}).

By $D_{\rm v,2}^{1/2}(t)$, $t \in [0,t_{f}]$, we denote the unique symmetric positive definite square root of the symmetric positive definite matrix $D_{\rm v,2}(t)$. By $D_{\rm v,2}^{- 1/2}(t)$, $t \in [0,t_{f}]$, we denote the inverse matrix of this square root. Using these matrices, we consider the following block-form matrix:
\begin{equation}\label{matr-Theta_0} \Theta(t) = \left(\begin{array}{l}\  \
 \ I_{s}\  \  \  \  \  \  \  \  \  \  \  0.5D_{\rm v,2}^{-1/2}(t)\\ - D_{\rm v,2}^{1/2}(t)\  \  \  \  \ 0.5I_{s} \end{array}\right),\  \  \  \ t \in [0,t_{f}].\end{equation}
This matrix is invertible for all $t \in [0,t_{f}]$ and its inverse matrix is
\begin{equation}\label{matr-inverse-Theta_0} \Theta^{-1}(t) = \left(\begin{array}
{l}0.5I_{s}\  \  \  \  \  \  \ -0.5D_{\rm v,2}^{-1/2}(t)\\  D_{\rm v,2}^{1/2}(t)\  \  \  \  \  \  \  \ I_{s} \end{array}\right),\  \  \  \ t \in [0,t_{f}].\end{equation}

Using the matrices $\Theta(t)$ and $\Theta^{-1}(t)$, we make the following transformation of the state variables in the system (\ref{eq-bound-corr-3}) and the conditions (\ref{additional-cond-for-corr-3}),(\ref{init-cond-for-z_2^0-mu_2^0}):
\begin{equation}\label{transform-corr-3} \left(\begin{array}{l} z_{2,0}^{0}(\xi)\\ \\ \lambda_{\rm v 2,0}^{0}(\xi)\end{array}\right) =\Theta(0)\left(\begin{array}{l}x^{0}_{0}(\xi)\\ \\ y^{0}_{0}(\xi)\end{array}\right),\ \  \  \ \left(\begin{array}{l} \mu_{2,0}^{0}(\xi)\\ \\ \lambda_{\rm u 2,0}^{0}(\xi)\end{array}\right) =\Theta(0)\left(\begin{array}{l}\eta^{0}_{0}(\xi)\\ \\ \zeta^{0}_{0}(\xi)\end{array}\right),\end{equation}
where $x^{0}_{0}(\xi) \in \mathbb{R}^{s}$, $y^{0}_{0}(\xi) \in \mathbb{R}^{s}$, $\eta_{0}^{0}(\xi) \in \mathbb{R}^{s}$ and $\zeta_{0}^{0}(\xi) \in \mathbb{R}^{s}$ are new state variables.

The transformation (\ref{transform-corr-3}) converts the system (\ref{eq-bound-corr-3})  and the conditions (\ref{additional-cond-for-corr-3}),(\ref{init-cond-for-z_2^0-mu_2^0}) to the system
\begin{eqnarray}\label{new-eq-bound-corr-3} \frac{dx^{0}_{0}(\xi)}{d\xi} = D_{\rm v,2}^{1/2}(0)x^{0}_{0}(\xi),\  \  \  \ \xi \ge 0,\nonumber\\
\frac{dy^{0}_{0}(\xi)}{d\xi} = - D_{\rm v,2}^{1/2}(0)y^{0}_{0}(\xi),\  \  \  \ \xi \ge 0,\nonumber\\
\frac{d\eta_{0}^{0}(\xi)}{d\xi} = 0.5D_{\rm v,2}^{-1/2}(0)D_{\rm u,3}(0)x_{0}^{0}(\xi) + 0.25D_{\rm v,2}^{-1/2}(0)D_{\rm u,3}(0)D_{\rm v,2}^{-1/2}(0)y_{0}^{0}(\xi)\nonumber\\
- D_{\rm v,2}^{1/2}(0)\eta_{0}^{0}(\xi),\  \  \ \xi \ge 0,\nonumber\\
\frac{d\zeta_{0}^{0}(\xi)}{d\xi} = - D_{\rm u,3}(0)x_{0}^{0}(\xi) - 0.5D_{\rm u,3}(0)D_{\rm v,2}^{-1/2}(0)y_{0}^{0}(\xi) + D_{\rm v,2}^{1/2}(0)\zeta_{0}^{0}(\xi),\  \  \ \xi \ge 0,\nonumber\\
\end{eqnarray}
and the conditions
\begin{equation}\label{transf-add-cond-for-corr-3}\lim_{\xi \rightarrow + \infty}x^{0}_{0}(\xi) = 0,\  \  \  \lim_{\xi \rightarrow + \infty}y^{0}_{0}(\xi) = 0,\  \  \ \lim_{\xi \rightarrow + \infty}\eta_{0}^{0}(\xi) = 0,\  \  \ \lim_{\xi \rightarrow + \infty}\zeta_{0}^{0}(\xi) = 0,\end{equation}
\begin{eqnarray}\label{init-cond-corr-3-transf} x^{0}_{0}(0) = 0.5\big(z_{0 2} - \bar{z}_{2,0}(0)\big) - 0.5D_{\rm v,2}^{-1/2}(0)\lambda_{\rm v 2,0}^{0}(0),\nonumber\\
y^{0}_{0}(0) = D_{\rm v,2}^{1/2}(0)\big(z_{0 2} - \bar{z}_{2,0}(0)\big) + \lambda_{\rm v 2,0}^{0}(0),\nonumber\\
\eta_{0}^{0}(0) = - 0.5\bar{\mu}_{2,0}(0) - 0.5D_{\rm v,2}^{-1/2}(0)\lambda_{\rm u2,0}^{0}(0),\nonumber\\
\zeta_{0}^{0}(0) = - D_{\rm v,2}^{1/2}(0)\bar{\mu}_{2,0}(0) + \lambda_{\rm u2,0}^{0}(0).\nonumber\\
\end{eqnarray}

The first equation in (\ref{new-eq-bound-corr-3}) subject to the first condition in (\ref{init-cond-corr-3-transf}) yields the solution
\begin{equation}\label{x^0} x^{0}_{0}(\xi) = \exp\big(D_{\rm v,2}^{1/2}(0)\xi\big)x^{0}_{0}(0),\  \  \  \ \xi \ge 0.\end{equation}

Since the matrix $D_{\rm v,2}^{1/2}(0)$ is positive definite, then the vector-valued function $x^{0}_{0}(\xi)$, obtained in (\ref{x^0}), satisfies the first limit condition in (\ref{transf-add-cond-for-corr-3}) if and only if $x^{0}_{0}(0) = 0$. Hence, the solution of the first equation in (\ref{new-eq-bound-corr-3}) subject to the first condition in (\ref{transf-add-cond-for-corr-3}) is identically zero, i.e.,
\begin{equation}\label{x^0=0} x^{0}_{0}(\xi) \equiv 0,\  \  \  \ \xi \ge 0.\end{equation}
The latter, along with the first and the second conditions in (\ref{init-cond-corr-3-transf}), yields
\begin{eqnarray}\label{lambda_v2^0(0)} \lambda_{\rm v 2,0}^{0}(0) = D_{\rm v,2}^{1/2}(0)\big(z_{0 2} - \bar{z}_{2,0}(0)\big),\nonumber\\
y^{0}_{0}(0) = 2D_{\rm v,2}^{1/2}(0)\big(z_{0 2} - \bar{z}_{2,0}(0)\big).\nonumber\\
\end{eqnarray}

Solving the second equation in (\ref{new-eq-bound-corr-3}) subject to the second initial condition in (\ref{lambda_v2^0(0)}), we have
\begin{equation}\label{y^0} y^{0}_{0}(\xi) = 2\exp\big(- D_{\rm v,2}^{1/2}(0)\xi\big)D_{\rm v,2}^{1/2}(0)\big(z_{0 2} - \bar{z}_{2,0}(0)\big),\  \  \  \ \xi \ge 0.
\end{equation}
Since the matrix $D_{\rm v,2}^{1/2}(0)$ is positive definite, then the vector-valued function $y^{0}_{0}(\xi)$, obtained in (\ref{y^0}), satisfies the second limit condition in (\ref{transf-add-cond-for-corr-3}). Thus, we have obtained the solution of the first and the second differential equations in (\ref{new-eq-bound-corr-3}) satisfying the first and the second conditions in (\ref{transf-add-cond-for-corr-3}) and in (\ref{init-cond-corr-3-transf}).

Now, let us treat the third and the fourth differential equations in (\ref{new-eq-bound-corr-3}) subject to the third and the fourth conditions in (\ref{transf-add-cond-for-corr-3}) and in (\ref{init-cond-corr-3-transf}). First, substituting (\ref{x^0=0}) and (\ref{y^0}) into the aforementioned differential equations, we obtain after routine matrix calculations
\begin{eqnarray}\label{new-eq-eta_0-zeta_0}\frac{d\eta_{0}^{0}(\xi)}{d\xi} = - D_{\rm v,2}^{1/2}(0)\eta_{0}^{0}(\xi) + 0.5D_{\rm v,2}^{-1/2}(0)D_{\rm u,3}(0)\exp\big(- D_{\rm v,2}^{1/2}(0)\xi\big)\big(z_{02} - \bar{z}_{2,0}(0)\big),
\nonumber\\\xi \ge 0,\nonumber\\
\frac{d\zeta_{0}^{0}(\xi)}{d\xi} = D_{\rm v,2}^{1/2}(0)\zeta_{0}^{0}(\xi) - D_{\rm u,3}(0)\exp\big(- D_{\rm v,2}^{1/2}(0)\xi\big)\big(z_{02} - \bar{z}_{2,0}(0)\big),\nonumber\\\xi \ge 0.\nonumber\\
\end{eqnarray}

The second differential equation of (\ref{new-eq-eta_0-zeta_0}) subject to the fourth initial condition in (\ref{init-cond-corr-3-transf}) yields the solution
\begin{eqnarray}\label{zeta_0^0}\zeta_{0}^{0}(\xi) = \exp\big(D_{\rm v,2}^{1/2}(0)\xi\big)\bigg[\zeta_{0}^{0}(0)\nonumber\\
- \int_{0}^{\xi}\exp\big(- D_{\rm v,2}^{1/2}(0)\sigma\big)D_{\rm u,3}(0)\exp\big(- D_{\rm v,2}^{1/2}(0)\sigma\big)d\sigma \big(z_{02} - \bar{z}_{2,0}(0)\big)\bigg],\  \  \ \xi \ge 0.\end{eqnarray}
This equation can be rewritten as:
\begin{eqnarray}\label{zeta_0^0-I}\exp\big(- D_{\rm v,2}^{1/2}(0)\xi\big)\zeta_{0}^{0}(\xi) = \zeta_{0}^{0}(0)\nonumber\\
- \int_{0}^{\xi}\exp\big(- D_{\rm v,2}^{1/2}(0)\sigma\big)D_{\rm u,3}(0)\exp\big(- D_{\rm v,2}^{1/2}(0)\sigma\big)d\sigma \big(z_{02} - \bar{z}_{2,0}(0)\big),\  \  \ \xi \ge 0.\nonumber
\end{eqnarray}

Calculating the limit of this equation for $\xi \rightarrow + \infty$ and taking into account the positive definiteness of $D_{\rm v,2}^{1/2}(0)$ and the fourth limit condition in (\ref{transf-add-cond-for-corr-3}), we obtain the new expression for $\zeta_{0}^{0}(0)$
\begin{eqnarray}\label{zeta_0^0(0)} \zeta_{0}^{0}(0) = \int_{0}^{+\infty}\exp\big(- D_{\rm v,2}^{1/2}(0)\sigma\big)D_{\rm u,3}(0)\exp\big(- D_{\rm v,2}^{1/2}(0)\sigma\big)d\sigma \big(z_{02} - \bar{z}_{2,0}(0)\big).\end{eqnarray}

Note that, due to the positive definiteness of the matrix $D_{\rm v,2}^{1/2}(0)$, the integral in the right-hand side of (\ref{zeta_0^0(0)}) converges.

Using (\ref{zeta_0^0(0)}), we can rewrite the expression for $\zeta_{0}^{0}(\xi)$ in (\ref{zeta_0^0}) as:
\begin{eqnarray}\label{zeta_0^0-new-form}\zeta_{0}^{0}(\xi) =  \int_{\xi}^{+ \infty}\exp\big(D_{\rm v,2}^{1/2}(0)(\xi - \sigma)\big)D_{\rm u,3}(0)\exp\big(- D_{\rm v,2}^{1/2}(0)\sigma\big)d\sigma \big(z_{02} - \bar{z}_{2,0}(0)\big),\nonumber\\\xi \ge 0.\end{eqnarray}

Due to the positive definiteness of the matrix $D_{\rm v,2}^{1/2}(0)$, the integral in the right-hand side of (\ref{zeta_0^0-new-form}) converges for any $\xi \ge 0$ and $\zeta_{0}^{0}(\xi)$, given by this equation, satisfies the fourth limit condition in  (\ref{transf-add-cond-for-corr-3}).

Furthermore, equating the expressions for $\zeta_{0}^{0}(0)$, given in (\ref{init-cond-corr-3-transf}) and in (\ref{zeta_0^0(0)}), we directly have
\begin{eqnarray}\label{lambda_u20^0(0)} \lambda_{\rm u2,0}^{0}(0) = D_{v,2}^{1/2}(0)\bar{\mu}_{2,0}(0)\nonumber\\
+ \int_{0}^{+\infty}\exp\big(- D_{\rm v,2}^{1/2}(0)\sigma\big)D_{\rm u,3}(0)\exp\big(- D_{\rm v,2}^{1/2}(0)\sigma\big)d\sigma \big(z_{02} - \bar{z}_{2,0}(0)\big).\end{eqnarray}

Substitution of (\ref{lambda_u20^0(0)}) into the third condition in (\ref{init-cond-corr-3-transf}) yields
\begin{eqnarray}\label{eta_0^0(0)}\eta_{0}^{0}(0) = - \bar{\mu}_{2,0}(0)\nonumber\\
- 0.5D_{\rm v,2}^{-1/2}(0)\int_{0}^{+\infty}\exp\big(- D_{\rm v,2}^{1/2}(0)\sigma\big)D_{\rm u,3}(0)\exp\big(- D_{\rm v,2}^{1/2}(0)\sigma\big)d\sigma\times \nonumber\\
\big(z_{02} - \bar{z}_{2,0}(0)\big).\end{eqnarray}

Then, solving the first differential equation in (\ref{new-eq-eta_0-zeta_0}) subject to the initial condition (\ref{eta_0^0(0)}), we directly have
\begin{eqnarray}\label{eta_0^0}\eta_{0}^{0}(\xi) = \exp\big(- D_{\rm v,2}^{1/2}(0)\xi\big)\eta_{0}^{0}(0)\nonumber\\
+ \frac{1}{2}\int_{0}^{\xi}\exp\big(- D_{\rm v,2}^{1/2}(0)(\xi - \sigma)\big)D_{\rm v,2}^{-1/2}(0)D_{u,3}(0)\exp\big(- D_{\rm v,2}^{1/2}(0)\sigma\big)d\sigma\times \nonumber\\
\big(z_{0 2} - \bar{z}_{2,0}(0)\big),\  \  \  \ \xi \ge 0.\nonumber\\
\end{eqnarray}

Thus, the vector-valued functions $x^{0}_{0}(\xi)$, $y^{0}_{0}(\xi)$, $\zeta_{0}^{0}(\xi)$  and $\eta_{0}^{0}(\xi)$ (see the equations (\ref{x^0=0}),(\ref{y^0}),(\ref{zeta_0^0-new-form}) and (\ref{eta_0^0})), respectively) constitute the solution of the system (\ref{new-eq-bound-corr-3}) satisfying the conditions (\ref{transf-add-cond-for-corr-3}),(\ref{init-cond-corr-3-transf}).
Substitution of this solution into (\ref{transform-corr-3}) and use of (\ref{matr-Theta_0}) yield, after a routine rearrangement, the solution of the system (\ref{eq-bound-corr-3}) satisfying the conditions (\ref{additional-cond-for-corr-3}),(\ref{init-cond-for-z_2^0-mu_2^0})
\begin{eqnarray}\label{boundary-correction-3}
z_{2,0}^{0}(\xi) = \exp\big(- D_{\rm v,2}^{1/2}(0)\xi\big)\big(z_{0 2} - \bar{z}_{2,0}(0)\big),\  \  \  \ \xi \ge 0,\nonumber\\
\lambda_{\rm v 2,0}^{0}(\xi) = D_{\rm v, 2}^{1/2}(0)\exp\big(- D_{\rm v,2}^{1/2}(0)\xi\big)\big(z_{0 2} - \bar{z}_{2,0}(0)\big),\  \  \  \ \xi \ge 0,\nonumber\\
\lambda_{\rm u2,0}^{0}(\xi) = - D_{\rm v,2}^{1/2}(0)\eta_{0}^{0}(\xi) + 0.5\zeta_{0}^{0}(\xi),\  \  \  \ \xi \ge 0,\nonumber\\
\mu_{2,0}^{0}(\xi) = \eta_{0}^{0}(\xi) + 0.5D_{\rm v,2}^{-1/2}(0)\zeta_{0}^{0}(\xi),\  \  \  \ \xi \ge 0.\nonumber\\
\end{eqnarray}
This solution satisfies the inequalities
\begin{eqnarray}\label{ineq-for-bound-correc-3} \big\|z_{2,0}^{0}(\xi)\big\| \le a_{\rm z}^{0}\exp(- \beta^{0}\xi),\  \  \  \  \big\|\lambda_{\rm v 2,0}^{0}(\xi)\big\| \le a_{\rm v}^{0}\exp(- \beta^{0}\xi),\  \  \  \ \xi \ge 0,\nonumber\\
\big\|\lambda_{\rm u 2,0}^{0}(\xi)\big\| \le a_{\rm u}^{0}\exp(- 0.5\beta^{0}\xi),\  \  \  \ \big\|\mu_{2,0}^{0}(\xi)\big\| \le a_{\mu}^{0}\exp(- 0.5\beta^{0}\xi),\  \  \  \ \xi \ge 0,\nonumber\\
\end{eqnarray}
where $a_{\rm z}^{0} > 0$, $a_{\rm v}^{0} > 0$, $a_{\rm v}^{0} > 0$, $a_{\mu}^{0} > 0$ are some constants; $\beta^{0} > 0$ is the minimal eigenvalue of the matrix $D_{\rm v,2}^{1/2}(0)$.

For obtaining the solution of the system (\ref{eq-bound-corr-4}) satisfying the conditions (\ref{additional-cond-for-corr-4}),(\ref{termin-cond-for-lambda_u2^f}), we use the procedure similar to that for the derivation of the solution to the problem (\ref{eq-bound-corr-3}),(\ref{additional-cond-for-corr-3}),(\ref{init-cond-for-z_2^0-mu_2^0}). In this procedure, we replace $\Theta(0)$ with $\Theta(t_{f})$. Thus, taking into account that, in contrast with the system (\ref{eq-bound-corr-3}), the independent variable in the system (\ref{eq-bound-corr-4}) is non-positive ($\rho \le 0$), we have
\begin{equation}\label{bound-corrections-4} z_{2,0}^{f}(\rho) \equiv 0,\  \  \  \ \lambda_{\rm v 2,0}^{f}(\rho) \equiv 0,\  \  \  \  \lambda_{\rm u 2,0}^{f}(\rho) \equiv 0,\  \  \  \ \mu_{2,0}^{f}(\rho) \equiv 0,\  \  \  \  \eta \le 0.\end{equation}

\subsubsection{Obtaining the boundary corrections $z_{1,1}^{0}(\xi)$, $\lambda_{\rm u1,1}^{0}(\xi)$, $\lambda_{\rm v1,1}^{0}(\xi)$ and $\mu_{1,1}^{0}(\xi)$}

Using Remark \ref{explanation-zero-order-solution} and taking into account equation (\ref{bound-corr-1}), we obtain that  these boundary corrections satisfy the following equations:
\begin{eqnarray}\label{eq-for-bound-corr-l-1}
\frac{dz_{1,1}^{0}(\xi)}{d\xi} = A_{2}(0)z_{2,0}^{0}(\xi),\
 \  \  \ \xi \ge 0,\nonumber\\
\frac{d\lambda_{\rm u1,1}^{0}(\xi)}{d\xi} = - D_{\rm u,2}(0)z_{2,0}^{0}(\xi),\  \  \  \ \xi \ge 0,\nonumber\\
\frac{d\lambda_{\rm v1,1}^{0}(\xi)}{d\xi} = 0,\
 \  \  \ \xi \ge 0,\nonumber\\
\frac{d\mu_{1,1}^{0}(\xi)}{d\xi} = A_{2}(0)\mu_{2,0}^{0}(\xi),\  \  \  \ \xi \ge 0.\nonumber\\
\end{eqnarray}

Similarly to the results of Subsection \ref{boundary-correction-1-0} (see the equation (\ref{cond-for-bound-corr-l})),
we look for the solution of the system (\ref{eq-for-bound-corr-l-1}) subject to the conditions
\begin{equation}\label{cond-for-bound-corr-l-1}
\lim_{\xi \rightarrow + \infty}z_{1,1}^{0}(\xi)= 0,\  \ \lim_{\xi \rightarrow + \infty}\lambda_{\rm u1,1}^{0}(\xi)= 0,\  \
\lim_{\xi \rightarrow + \infty}\lambda_{\rm v1,1}^{0}(\xi)= 0,\  \ \lim_{\xi \rightarrow + \infty}\mu_{\rm 1,1}^{0}(\xi)= 0.\end{equation}

Now, using the expressions for $z_{2,0}^{0}(\xi)$ and $\mu_{2,0}^{0}(\xi)$ (see the equation (\ref{boundary-correction-3})), we directly obtain the unique solution of the system (\ref{eq-for-bound-corr-l-1}) subject to the conditions (\ref{cond-for-bound-corr-l-1})
\begin{eqnarray}\label{bound-corr-1-1}z_{1,1}^{0}(\xi) = - A_{2}(0)D_{\rm v,2}^{-1/2}(0)\exp\big(- D_{\rm v,2}^{1/2}(0)\xi\big)\big(z_{0 2} - \bar{z}_{2,0}(0)\big),\  \  \  \ \xi \ge 0,\nonumber\\
\lambda_{\rm u1,1}^{0}(\xi) = D_{\rm u,2}(0)D_{\rm v,2}^{-1/2}(0)\exp\big(- D_{\rm v,2}^{1/2}(0)\xi\big)\big(z_{0 2} - \bar{z}_{2,0}(0)\big),\  \  \  \ \xi \ge 0,\nonumber\\
\lambda_{\rm v1,1}^{0}(\xi) \equiv 0,\  \  \ \xi \ge 0,\nonumber\\
\mu_{1,1}^{0}(\xi) = - A_{2}(0)\int_{\xi}^{+\infty}\mu_{2,0}^{0}(\sigma)d\sigma,\  \  \ \xi \ge 0.
\nonumber\\ \end{eqnarray}

Due to the first and the fourth inequalities in (\ref{ineq-for-bound-correc-3}), we have the following inequalities for the vector-valued functions $z_{1,1}^{0}(\xi)$, $\lambda_{\rm u1,1}^{0}(\xi)$ and $\mu_{1,1}^{0}(\xi)$:
\begin{eqnarray}\label{ineq-bound-terms-1-1}
\big\|z_{1,1}^{0}(\xi)\big\| \le a_{z}^{0}\big\| A_{2}(0)D_{\rm v,2}^{-1/2}(0)\big\|\exp(- \beta^{0}\xi),\  \  \  \ \xi \ge 0,\nonumber\\
\big\|\lambda_{\rm u1,1}^{0}(\xi)\big\| \le a_{z}^{0}\big\|D_{\rm u,2}(0)D_{\rm v,2}^{-1/2}(0)\big\|\exp(- \beta^{0}\xi),\  \  \  \ \xi \ge 0,\nonumber\\
\big\|\mu_{1,1}^{0}(\xi)\big\| \le \frac{2a_{\mu}^{0}}{\beta^{0}}\big\|A_{2}(0)\big\|\exp(-0.5\beta^{0}\xi),\  \  \  \ \xi \ge 0.\nonumber\\
\end{eqnarray}

\subsubsection{Obtaining the boundary corrections $z_{1,1}^{f}(\rho)$, $\lambda_{\rm u1,1}^{f}(\rho)$, $\lambda_{\rm v1,1}^{f}(\rho)$ and $\mu_{1,1}^{f}(\rho)$}

Using the equations (\ref{bound-corr-2}) and (\ref{bound-corrections-4}), we derive the following differential equations for these boundary corrections:
\begin{eqnarray}\label{eq-for-bound-corr-2-1}
\frac{dz_{1,1}^{f}(\rho)}{d\rho} = 0,\
 \  \  \ \rho \le 0,\nonumber\\
\frac{d\lambda_{\rm u1,1}^{f}(\rho)}{d\rho} = 0,\  \  \  \ \rho \le 0,\nonumber\\
\frac{d\lambda_{\rm v1,1}^{f}(\rho)}{d\rho} = 0,\  \  \  \ \rho \le 0,\nonumber\\
\frac{d\mu_{\rm 1,1}^{f}(\rho)}{d\rho} = 0,\  \  \  \ \rho \le 0.\nonumber\\
\end{eqnarray}

Similarly to the results of Subsection \ref{boundary-corrections-f-0} (see the equation (\ref{cond-for-bound-corr-2})), we require that
\begin{equation}\label{cond-for-bound-corr-2-1}
\lim_{\rho \rightarrow - \infty}z_{1,1}^{f}(\rho)= 0,\  \ \lim_{\rho \rightarrow - \infty}\lambda_{\rm u1,1}^{f}(\rho)= 0,\  \
\lim_{\rho \rightarrow - \infty}\lambda_{\rm v1,1}^{f}(\rho)= 0,\  \ \lim_{\rho \rightarrow - \infty}\mu_{ 1,1}^{f}(\rho)= 0.\end{equation}

The system (\ref{eq-for-bound-corr-2-1}) has the unique solution satisfying the conditions (\ref{cond-for-bound-corr-2-1})
\begin{equation}\label{bound-corr-2-1}z_{1,1}^{f}(\rho) \equiv 0,\  \  \ \lambda_{\rm u1,1}^{f}(\rho) \equiv 0,\  \  \ \lambda_{\rm v1,1}^{f}(\rho) \equiv 0,\  \  \ \mu_{1,1}^{f}(\rho) \equiv 0,\  \  \ \rho \le 0.
\end{equation}

\subsubsection{Obtaining the outer solution terms $\bar{z}_{j,1}(t)$, $\bar{\lambda}_{\rm uj,1}(t)$, $\bar{\lambda}_{\rm vj,1}(t)$, $\bar{\mu}_{\rm j,1}(t)$, $j = 1,2$}\label{outer-solution-first-order}

Using the second and the eighth equations in (\ref{outer-solution-system}) and the equations (\ref{bound-corr-1-1}),(\ref{bound-corr-2-1}), we derive (similarly to the equations (\ref{outer-solution-system}),(\ref{outer-solution-cond})), the following differential-algebraic system of equation for the outer solution's terms $\bar{z}_{j,1}(t)$, $\bar{\lambda}_{\rm uj,1}(t)$, $\bar{\lambda}_{\rm vj,1}(t)$, $\bar{\mu}_{\rm j,1}(t)$, $(j = 1,2)$ in the time interval $[0,t_{f}]$:
\begin{eqnarray}\label{outer-solution-system-1}
\frac{d\bar{z}_{1,1}(t)}{dt} =
A_{1}(t)\bar{z}_{1,1}(t) + A_{2}(t)\bar{z}_{2,1}(t) - S_{\rm u,1}(t)\bar{\lambda}_{\rm u1,1}(t),\nonumber\\
\frac{d\bar{z}_{2,0}(t)}{dt} = A_{3}(t)\bar{z}_{1,0}(t) + A_{4}(t)\bar{z}_{2,0}(t) - S_{\rm u,2}^{T}(t)\bar{\lambda}_{\rm u1,0}(t) - \bar{\lambda}_{\rm v2,1}(t),\nonumber\\
\frac{d\bar{\lambda}_{\rm u1,1}(t)}{dt} = - D_{\rm u,1}(t)\bar{z}_{1,1}(t) - D_{\rm u,2}(t)\bar{z}_{2,1}(t) - A_{1}^{T}(t)\bar{\lambda}_{\rm u1,1}(t) + D_{\rm v,1}(t)\bar{\mu}_{1,1}(t),\nonumber\\
0 = - D_{\rm u,2}^{T}(t)\bar{z}_{1,1}(t) - D_{\rm u,3}(t)\bar{z}_{2,1}(t) - A_{2}^{T}(t)\bar{\lambda}_{\rm u1,1}(t)  + D_{\rm v,2}(t)\bar{\mu}_{2,1}(t),\nonumber\\
\frac{d\bar{\lambda}_{\rm v1,1}(t)}{dt} = - D_{\rm v,1}(t)\bar{z}_{1,1}(t) - A_{1}^{T}(t)\bar{\lambda}_{\rm v1,1}(t),\nonumber\\
0 = - D_{\rm v,2}(t)\bar{z}_{2,1}(t) - A_{2}^{T}(t)\bar{\lambda}_{\rm v1,1}(t),\nonumber\\
\frac{d\bar{\mu}_{1,1}(t)}{dt} = A_{1}(t)\bar{\mu}_{1,1}(t) + A_{2}(t)\bar{\mu}_{2,1}(t),\nonumber\\
\frac{d\bar{\mu}_{2,0}(t)}{dt} = \bar{\lambda}_{\rm u2,1}(t) + A_{3}(t)\bar{\mu}_{1,0}(t) + A_{4}(t)\bar{\mu}_{2,0}(t),\nonumber\\
\end{eqnarray}
\begin{eqnarray}\label{outer-solution-cond-1} \bar{z}_{1,1}(0) = - z_{1,1}^{0}(0) = A_{2}(0)D_{\rm v,2}^{-1/2}(0)\big(z_{0 2} - \bar{z}_{2,0}(0)\big),\nonumber\\   \bar{\lambda}_{\rm u1,1}(t_{f}) = 0,\  \  \  \  \bar{\lambda}_{\rm v1,1}(t_{f}) = 0,\  \  \  \  \bar{\mu}_{1,1}(0) = - \mu_{1,1}^{0}(0) = A_{2}(0)\int_{0}^{+\infty}\mu_{2,0}^{0}(\sigma)d\sigma.\nonumber\\
\end{eqnarray}

Note that, for the system (\ref{outer-solution-system-1})-(\ref{outer-solution-cond-1}), the assertion similar to Remark \ref{no-cond-for-two-variables} is valid.

Resolving the second, the fourth, the sixth and the eighth equations in the system (\ref{outer-solution-system-1}) with respect to $\bar{\lambda}_{\rm v2,1}(t)$, $\bar{\mu}_{2,1}(t)$, $\bar{z}_{2,1}(t)$ and $\bar{\lambda}_{\rm u2,1}(t)$, respectively, and taking into account the invertibility of the matrix $D_{\rm v,2}(t)$ for all $t \in [0,t_{f}]$, we obtain
\begin{eqnarray}\label{bar-lambda_v21}\bar{\lambda}_{\rm v2,1}(t) = A_{3}(t)\bar{z}_{1,0}(t) + A_{4}(t)\bar{z}_{2,0}(t) - S_{\rm u,2}^{T}(t)\bar{\lambda}_{\rm u1,0}(t) - d\bar{z}_{2,0}(t)/dt,\end{eqnarray}
\begin{eqnarray}\label{bar-mu_21} \bar{\mu}_{2,1}(t) = D_{\rm v,2}^{-1}(t)\big[D_{\rm u,2}^{T}(t)\bar{z}_{1,1}(t) + D_{\rm u,3}(t)\bar{z}_{2,1}(t) + A_{2}^{T}(t)\bar{\lambda}_{\rm u1,1}(t)\big],\end{eqnarray}
\begin{eqnarray}\label{bar-z_21}\bar{z}_{2,1}(t) = - D_{\rm v,2}^{-1}(t)A_{2}^{T}(t)\bar{\lambda}_{\rm v1,1}(t),
\end{eqnarray}
\begin{eqnarray}\label{bar-lambda_u21} \bar{\lambda}_{\rm u2,1}(t) = d\bar{\mu}_{2,0}(t)/dt - A_{3}(t)\bar{\mu}_{1,0}(t) - A_{4}(t)\bar{\mu}_{2,0}(t).\end{eqnarray}

Substituting (\ref{bar-z_21}) into the equation (\ref{bar-mu_21}), we rewrite the expression for $\bar{\mu}_{2,1}(t)$ as:
\begin{eqnarray}\label{bar-mu_21-new-form}\bar{\mu}_{2,1}(t) = D_{\rm v,2}^{-1}(t)\big[D_{\rm u,2}^{T}(t)\bar{z}_{1,1}(t) - D_{\rm u,3}(t)D_{\rm v,2}^{-1}(t)A_{2}^{T}(t)\bar{\lambda}_{\rm v1,1}(t) + A_{2}^{T}(t)\bar{\lambda}_{\rm u1,1}(t)\big].\  \  \  \
\end{eqnarray}

Finally, substitution of (\ref{bar-z_21}),(\ref{bar-mu_21-new-form}) into the first, third and seventh equations of the  system (\ref{outer-solution-system-1}) and use of the fifth equation of this system yield after a routine algebra the following set of four differential equations with respect to $\bar{z}_{1,1}(t)$, $\bar{\lambda}_{\rm u1,1}(t)$, $\bar{\lambda}_{\rm v1,1}(t)$, $\bar{\mu}_{1,1}(t)$:
\begin{eqnarray}\label{set-four-diff-eq-outer-sol-1}
\frac{d\bar{z}_{1,1}(t)}{dt} = A_{1}(t)\bar{z}_{1,1}(t) - S_{\rm u,1}(t)\bar{\lambda}_{\rm u1,1}(t) - A_{2}(t)D_{\rm v,2}^{-1}(t)A_{2}^{T}(t)\bar{\lambda}_{\rm v1,1}(t),\nonumber\\
\frac{d\bar{\lambda}_{\rm u1,1}(t)}{dt} = - D_{\rm u,1}(t)\bar{z}_{1,1}(t) - A_{1}^{T}(t)\bar{\lambda}_{\rm u1,1}(t) + D_{\rm u,2}(t)D_{\rm v,2}^{-1}(t)A_{2}^{T}(t)\bar{\lambda}_{\rm v1,1}(t)\nonumber\\ + D_{\rm v,1}(t)\bar{\mu}_{\rm 1,1}(t),\nonumber\\
\frac{d\bar{\lambda}_{\rm v1,1}(t)}{dt} = - D_{\rm v,1}(t)\bar{z}_{1,1}(t) -
A_{1}^{T}(t)\bar{\lambda}_{\rm v1,1}(t),\nonumber\\
\frac{d\bar{\mu}_{1,1}(t)}{dt} = A_{2}(t)D_{\rm v,2}^{-1}(t)D_{\rm u,2}^{T}(t)\bar{z}_{1,1}(t) + A_{2}(t)D_{\rm v,2}^{-1}(t)A_{2}^{T}(t)\bar{\lambda}_{\rm u1,1}(t)\nonumber\\
- A_{2}(t)D_{\rm v,2}^{-1}(t)D_{u,3}(t)D_{\rm v,2}^{-1}(t)A_{2}^{T}(t)\bar{\lambda}_{\rm v1,1}(t) + A_{1}(t)\bar{\mu}_{1,1}(t).\nonumber\\
\end{eqnarray}

The system of the differential equations (\ref{set-four-diff-eq-outer-sol-1}) is subject to the boundary conditions  (\ref{outer-solution-cond-1}).

\begin{remark}\label{existence-uniqueness-solution-1} It is important to note the following. The boundary-value problem (\ref{set-four-diff-eq-outer-sol-1}),(\ref{outer-solution-cond-1}) is of the same form as the boundary-value problem (\ref{set-four-diff-eq-outer-sol}),(\ref{outer-solution-cond}). Since, due to the assumption (A7), the problem (\ref{set-four-diff-eq-outer-sol}),(\ref{outer-solution-cond}) has the unique solution, then the problem (\ref{set-four-diff-eq-outer-sol-1}),(\ref{outer-solution-cond-1}) also has the unique solution ${\rm col}\big(\bar{z}_{1,1}(t),\bar{\lambda}_{\rm u1,1}(t),\bar{\lambda}_{\rm v1,1}(t),\bar{\mu}_{1,1}(t)\big)$,\  \ $t \in [0,t_{f}]$.
\end{remark}

Hence, the solution of the boundary-value problem (\ref{set-four-diff-eq-outer-sol-1}),(\ref{outer-solution-cond-1}), along with $\bar{\lambda}_{\rm v2,1}(t)$, $\bar{\mu}_{2,1}(t)$, $\bar{z}_{2,1}(t)$ and $\bar{\lambda}_{\rm u2,1}(t)$ (see the equations (\ref{bar-lambda_v21}),(\ref{bar-mu_21}),(\ref{bar-z_21}) and (\ref{bar-lambda_u21})) constitutes the coefficient of the first-order (with respect to $\varepsilon$) addend in the outer solution.

\subsubsection{Obtaining the boundary corrections $z_{2,1}^{0}(\xi)$, $\lambda_{\rm u 2,1}^{0}(\xi)$, $\lambda_{\rm v 2,1}^{0}(\xi)$, $\mu_{2,1}^{0}(\xi)$ and $z_{2,1}^{f}(\rho)$, $\lambda_{\rm u 2,1}^{f}(\rho)$, $\lambda_{\rm v 2,1}^{f}(\rho)$, $\mu_{2,1}^{f}(\rho)$}\label{obtaining-bound-corr-3,4-1}

Using Remark \ref{explanation-zero-order-solution} and the equations (\ref{bound-corr-1}), (\ref{bound-corr-2}), (\ref{boundary-correction-3}), (\ref{bound-corrections-4}), (\ref{bound-corr-1-1}), (\ref{bound-corr-2-1}), we derive the following differential equations for these boundary corrections:
\begin{eqnarray}\label{eq-bound-corr-3-1} \frac{dz_{2,1}^{0}(\xi)}{d\xi} = A_{4}(0)z_{2, 0}^{0}(\xi) -
\lambda_{\rm v 2,1}^{0}(\xi),\  \  \  \ \xi \ge 0,\nonumber\\
\frac{d\lambda_{\rm u 2,1}^{0}(\xi)}{d\xi} = - D_{\rm u,2}^{T}(0)z_{1,1}^{0}(\xi) - D_{\rm u,3}(0)z_{2,1}^{0}(\xi) - \frac{dD_{\rm u,3}(0)}{dt}\xi z_{2, 0}^{0}(\xi)\nonumber\\ - A_{2}^{T}(0)\lambda_{\rm u1,1}^{0}(\xi)
- A_{4}^{T}(0)\lambda_{\rm u2,0}^{0}(\xi) + D_{\rm v,2}(0)\mu_{2,1}^{0}(\xi) + \frac{dD_{\rm v,2}(0)}{dt}\xi\mu_{2,0}^{0}(\xi),\  \  \  \ \xi \ge 0,\nonumber\\
\frac{d\lambda_{\rm v 2,1}^{0}(\xi)}{d\xi} = -D_{\rm v,2}(0)z_{2,1}^{0}(\xi) - \frac{dD_{\rm v,2}(0)}{dt}\xi z_{2, 0}^{0}(\xi) - A_{4}^{T}(0)\lambda_{\rm v2,0}^{0}(\xi),\  \  \  \ \xi \ge 0,\nonumber\\
\frac{d\mu_{2,1}^{0}(\xi)}{d\xi} = \lambda_{\rm u2,1}^{0}(\xi) + A_{4}(0)\mu_{2,0}^{0}(\xi),\  \  \  \ \xi \ge 0,\nonumber\\
\end{eqnarray}
\begin{eqnarray}\label{eq-bound-corr-4-1} \frac{dz_{2,1}^{f}(\rho)}{d\rho} = - \lambda_{\rm v 2,1}^{f}(\rho),\  \  \  \ \rho \le 0,\nonumber\\
\frac{d\lambda_{\rm u 2,1}^{f}(\rho)}{d\rho} = - D_{\rm u,3}(t_{f})z_{2,1}^{f}(\rho) + D_{\rm v,2}(t_{f})\mu_{2,1}^{f}(\rho),\  \  \  \ \rho \le 0,\nonumber\\
\frac{d\lambda_{\rm v 2,1}^{f}(\rho)}{d\rho} = - D_{\rm v,2}(t_{f})z_{2,1}^{f}(\rho),\  \  \  \ \rho \le 0,\nonumber\\
\frac{d\mu_{2,1}^{f}(\rho)}{d\rho} = \lambda_{\rm u2,1}^{f}(\rho),\  \  \  \ \rho \le 0.\nonumber\\
\end{eqnarray}

Similarly to (\ref{additional-cond-for-corr-3}),(\ref{additional-cond-for-corr-4}), we consider the systems (\ref{eq-bound-corr-3-1}) and (\ref{eq-bound-corr-4-1}) subject to the conditions
\begin{equation}\label
{additional-cond-for-corr-3-1} \lim_{\xi \rightarrow + \infty}z_{2,1}^{0}(\xi) = 0,\  \  \ \lim_{\xi \rightarrow + \infty}\lambda_{\rm u 2,1}^{0}(\xi) = 0,\  \  \ \lim_{\xi \rightarrow + \infty}\lambda_{\rm v 2,1}^{0}(\xi) = 0,\ \ \ \lim_{\xi \rightarrow + \infty}\mu_{2,1}^{0}(\xi) = 0,\end{equation}
\begin{equation}\label
{additional-cond-for-corr-4-1} \lim_{\rho \rightarrow - \infty}z_{2,1}^{f}(\rho) = 0,\  \  \ \lim_{\rho \rightarrow - \infty}\lambda_{\rm u 2,1}^{f}(\rho) = 0,\  \  \ \lim_{\rho \rightarrow - \infty}\lambda_{\rm v 2,1}^{f}(\rho) = 0,\ \ \ \lim_{\rho \rightarrow - \infty}\mu_{2,1}^{0}(\rho) = 0.\end{equation}

Moreover, similarly to (\ref{init-cond-for-z_2^0-mu_2^0}),(\ref{termin-cond-for-lambda_u2^f}), we obtain two additional conditions for the solutions of the systems (\ref{eq-bound-corr-3-1}) and (\ref{eq-bound-corr-4-1})
\begin{eqnarray}\label{init-cond-for-z_2^0-1} z_{2,1}^{0}(0) = - \bar{z}_{2,1}(0),\  \  \  \ \mu_{2,1}^{0}(0) = - \bar{\mu}_{2,1}(0),\end{eqnarray}
\begin{eqnarray}\label{termin-cond-for-lambda_u2^f-1} \lambda_{\rm u 2,1}^{f}(0) = - \bar{\lambda}_{\rm u2,1}(t_{f}),\ \  \  \ \lambda_{\rm v 2,1}^{f}(0) = - \bar{\lambda}_{\rm v2,1}(t_{f}).\end{eqnarray}

We start with the system (\ref{eq-bound-corr-3-1}) subject to the conditions (\ref{additional-cond-for-corr-3-1}) and (\ref{init-cond-for-z_2^0-1}).

Similarly to (\ref{transform-corr-3}), we make the following transformation of the state variables in the system (\ref{eq-bound-corr-3-1}) and the conditions (\ref{additional-cond-for-corr-3-1}),(\ref{init-cond-for-z_2^0-1}):
\begin{equation}\label{transform-corr-3-1} \left(\begin{array}{l} z_{2,1}^{0}(\xi)\\ \\ \lambda_{\rm v 2,1}^{0}(\xi)\end{array}\right) =\Theta(0)\left(\begin{array}{l}x^{0}_{1}(\xi)\\ \\ y^{0}_{1}(\xi)\end{array}\right),\ \  \  \ \left(\begin{array}{l} \mu_{2,1}^{0}(\xi)\\ \\ \lambda_{\rm u 2,1}^{0}(\xi)\end{array}\right) = \Theta(0)\left(\begin{array}{l}\eta^{0}_{1}(\xi)\\ \\ \zeta^{0}_{1}(\xi)\end{array}\right),\end{equation}
where the invertible matrix $\Theta(t)$, $t \in [0,t_{f}]$ is given by (\ref{matr-Theta_0}) and its inverse matrix $\Theta^{-1}(t)$ is given by (\ref{matr-inverse-Theta_0}); $x^{0}_{1}(\xi) \in \mathbb{R}^{s}$, $y^{0}_{1}(\xi) \in \mathbb{R}^{s}$, $\eta_{1}^{0}(\xi) \in \mathbb{R}^{s}$ and $\zeta_{1}^{0}(\xi) \in \mathbb{R}^{s}$ are new state variables.

The transformation (\ref{transform-corr-3-1}) converts the system (\ref{eq-bound-corr-3-1}) and the conditions (\ref{additional-cond-for-corr-3-1}),(\ref{init-cond-for-z_2^0-1}) to the system
\begin{eqnarray}\label{new-eq-bound-corr-3-1} \frac{dx^{0}_{1}(\xi)}{d\xi} = D_{\rm v,2}^{1/2}(0)x^{0}_{1}(\xi) + f_{x}(\xi),\  \  \  \ \xi \ge 0,\nonumber\\
\frac{dy^{0}_{1}(\xi)}{d\xi} = - D_{\rm v,2}^{1/2}(0)y^{0}_{1}(\xi) + f_{y}(\xi),\  \  \  \ \xi \ge 0,\nonumber\\
\frac{d\eta_{1}^{0}(\xi)}{d\xi} = 0.5D_{\rm v,2}^{-1/2}(0)D_{\rm u,3}(0)x_{1}^{0}(\xi) + 0.25D_{\rm v,2}^{-1/2}(0)D_{\rm u,3}(0)D_{\rm v,2}^{-1/2}(0)y_{1}^{0}(\xi)\nonumber\\
- D_{\rm v,2}^{1/2}(0)\eta_{1}^{0}(\xi) + f_{\eta}(\xi),\  \  \ \xi \ge 0,\nonumber\\
\frac{d\zeta_{1}^{0}(\xi)}{d\xi} = - D_{\rm u,3}(0)x_{1}^{0}(\xi) - 0.5D_{\rm u,3}(0)D_{\rm v,2}^{-1/2}(0)y_{1}^{0}(\xi)\nonumber\\
+ D_{\rm v,2}^{1/2}(0)\zeta_{1}^{0}(\xi) + f_{\zeta}(\xi),\  \  \ \xi \ge 0,\nonumber\\
\end{eqnarray}
and the conditions
\begin{equation}\label{transf-add-cond-for-corr-3-1}\lim_{\xi \rightarrow + \infty}x^{0}_{1}(\xi) = 0,\  \  \  \lim_{\xi \rightarrow + \infty}y^{0}_{1}(\xi) = 0,\  \  \ \lim_{\xi \rightarrow + \infty}\eta_{1}^{0}(\xi) = 0,\  \  \ \lim_{\xi \rightarrow + \infty}\zeta_{1}^{0}(\xi) = 0,\end{equation}
\begin{eqnarray}\label{init-cond-corr-3-transf-1} x^{0}_{1}(0) = - 0.5\bar{z}_{2,1}(0) - 0.5D_{\rm v,2}^{-1/2}(0)\lambda_{\rm v 2,1}^{0}(0),\nonumber\\
y^{0}_{1}(0) = - D_{\rm v,2}^{1/2}(0)\bar{z}_{2,1}(0) + \lambda_{\rm v 2,1}^{0}(0),\nonumber\\
\eta_{1}^{0}(0) = - 0.5\bar{\mu}_{2,1}(0) - 0.5D_{\rm v,2}^{-1/2}(0)\lambda_{\rm u2,1}^{0}(0),\nonumber\\
\zeta_{1}^{0}(0) = - D_{\rm v,2}^{1/2}(0)\bar{\mu}_{2,1}(0) + \lambda_{\rm u2,1}^{0}(0),\nonumber\\
\end{eqnarray}
where
\begin{eqnarray}\label{f_x,y-1}f_{x}(\xi) = 0.5\bigg(A_{4}(0)z_{2,0}^{0}(\xi) + D_{\rm v,2}^{-1/2}(0)\frac{dD_{\rm v,2}(0)}{dt}\xi z_{2,0}^{0}(\xi)\nonumber\\
+ D_{\rm v,2}^{-1/2}(0)A_{4}^{T}(0)\lambda_{\rm v2,0}^{0}(\xi)\bigg),\nonumber\\
f_{y}(\xi) = D_{\rm v,2}^{1/2}(0)A_{4}(0)z_{2,0}^{0}(\xi) - \frac{dD_{\rm v,2}(0)}{dt}\xi z_{2,0}^{0}(\xi) - A_{4}^{T}(0)\lambda_{\rm v2,0}^{0}(\xi),\nonumber\\
f_{\eta}(\xi) = 0.5\bigg(A_{4}(0)\mu_{2,0}^{0}(\xi) + D_{\rm v,2}^{-1/2}(0)D_{\rm u,2}^{T}(0)z_{1,1}^{0}(\xi)\nonumber\\ + D_{\rm v,2}^{-1/2}(0)\frac{dD_{\rm u,3}(0)}{dt}\xi z_{2,0}^{0}(\xi) + D_{\rm v,2}^{-1/2}(0)A_{2}^{T}(0)\lambda_{\rm u1,1}^{0}(\xi)\nonumber\\ + D_{\rm v,2}^{-1/2}(0)A_{4}^{T}(0)\lambda_{\rm u2,0}^{0}(\xi) - D_{\rm v,2}^{-1/2}(0)\frac{dD_{\rm v,2}(0)}{dt}\xi \mu_{2,0}^{0}(\xi)\bigg),\nonumber\\
f_{\zeta}(\xi) = D_{\rm v,2}^{1/2}(0)A_{4}(0)\mu_{2,0}^{0}(\xi) - D_{\rm u,2}^{T}(0)z_{1,1}^{0}(\xi) - \frac{dD_{\rm u,3}(0)}{dt}\xi z_{2,0}^{0}(\xi)\nonumber\\
- A_{2}^{T}(0)\lambda_{\rm u1,1}^{0}(\xi) - A_{4}^{T}(0)\lambda_{\rm u2,0}^{0}(\xi) + \frac{dD_{\rm v,2}(0)}{dt}\xi \mu_{2,0}^{0}(\xi).\nonumber\\
\end{eqnarray}

By virtue of the inequalities in (\ref{ineq-for-bound-correc-3}),(\ref{ineq-bound-terms-1-1}), we immediately have the estimates for $f_{x}(\xi)$, $f_{y}(\xi)$, $f_{\eta}(\xi)$ and $f_{\zeta}(\xi)$
\begin{eqnarray}\label{estimates-f_x,y}\big\|f_{x}(\xi)\big\| \le \alpha_{x}\exp(- 0.5\beta^{0}\xi),\  \  \  \ \big\|f_{y}(\xi)\big\| \le \alpha_{y}\exp(- 0.5\beta^{0}\xi),\  \  \  \ \xi \ge 0,\nonumber\\
\big\|f_{\eta}(\xi)\big\| \le \alpha_{\eta}\exp(- 0.5\beta^{0}\xi),\  \  \  \ \big\|f_{\zeta}(\xi)\big\| \le \alpha_{\zeta}\exp(- 0.5\beta^{0}\xi),\  \  \  \ \xi \ge 0,\nonumber\\
\end{eqnarray}
where $\alpha_{x}>0$, $\alpha_{y}>0$, $\alpha_{\eta}>0$ and $\alpha_{\zeta}>0$ are some constants; the positive constant $\beta^{0}$ was introduced in (\ref{ineq-for-bound-correc-3}).

Solving the first equation in (\ref{new-eq-bound-corr-3-1}) subject to the first condition in (\ref{transf-add-cond-for-corr-3-1}) and using the positive definiteness of $D_{\rm v,2}^{1/2}(0)$ and the estimate for $f_{x}(\xi)$ in (\ref{estimates-f_x,y}), we obtain after a routine algebra
\begin{eqnarray}\label{x_0^1}x_{1}^{0}(\xi) = - \int_{\xi}^{+ \infty}\exp\big(- D_{\rm v,2}^{1/2}(0)(\sigma - \xi)\big)f_{x}(\sigma)d\sigma,\  \  \  \ \xi \ge 0.\end{eqnarray}
The latter, along with the estimate for $f_{x}(\sigma)$, yields the inequality
\begin{eqnarray}\label{ineq-x_0^1}\big\|x_{1}^{0}(\xi)\big\| \le \omega_{x}\exp(- 0.5\beta^{0}\xi),\  \  \  \ \xi \ge 0,\end{eqnarray}
where $\omega_{x} > 0$ is some constant.

Elimination of $\lambda_{\rm v2,1}^{0}(0)$ from the first and the second equations in (\ref{init-cond-corr-3-transf-1}) and use of (\ref{x_0^1}) yield the initial condition for $y_{1}^{0}(\xi)$
\begin{eqnarray}\label{y_1^0(0)} y_{1}^{0}(0) = 2D_{\rm v,2}^{1/2}(0)\left(- \bar{z}_{2,1}(0) + \int_{0}^{+ \infty}\exp\big(- D_{\rm v,2}^{1/2}(0)\sigma\big)f_{x}(\sigma)d\sigma\right).
\end{eqnarray}

Solving the second equation in (\ref{new-eq-bound-corr-3-1}) subject to this initial condition, we have
\begin{eqnarray}\label{y_1^0}y_{1}^{0}(\xi) = \exp\big(- D_{\rm v,2}^{1/2}(0)\xi\big)y_{1}^{0}(0) + \int_{0}^{\xi}\exp\big(- D_{\rm v,2}^{1/2}(0)(\xi - \sigma)\big)f_{y}(\sigma)d\sigma,\  \  \ \xi \ge 0.\end{eqnarray}
Using the positive definiteness of the matrix $D_{\rm v,2}^{1/2}(0)$ and the estimate for $f_{y}(\xi)$ (see the equation (\ref{estimates-f_x,y})), we directly obtain that the vector-valued function $y_{1}^{0}(\xi)$ satisfies the limit condition in (\ref{transf-add-cond-for-corr-3-1}). Moreover, this function satisfies the inequality
\begin{eqnarray}\label{ineq-y_0^1}\big\|y_{1}^{0}(\xi)\big\| \le \omega_{y}\exp(- 0.5\beta^{0}\xi),\  \  \  \ \xi \ge 0,\end{eqnarray}
where $\omega_{y} > 0$ is some constant.

Now, proceed to the solution of the third and the fourth differential equations in (\ref{new-eq-bound-corr-3-1}) subject to the third and the fourth conditions in (\ref{transf-add-cond-for-corr-3-1}) and in (\ref{init-cond-corr-3-transf-1}). Let us denote
\begin{eqnarray}\label{g_eta-g_zeta}g_{\eta}(\xi) \stackrel{\triangle}{=} 0.5D_{\rm v,2}^{-1/2}(0)D_{\rm u,3}(0)x_{1}^{0}(\xi) + 0.25D_{\rm v,2}^{-1/2}(0)D_{\rm u,3}(0)D_{\rm v,2}^{-1/2}(0)y_{1}^{0}(\xi)\nonumber\\
+ f_{\eta}(\xi),\  \  \  \ \xi \ge 0,\nonumber\\
g_{\zeta}(\xi) \stackrel{\triangle}{=} - D_{\rm u,3}(0)x_{1}^{0}(\xi) - 0.5D_{\rm u,3}(0)D_{\rm v,2}^{-1/2}(0)y_{1}^{0}(\xi) + f_{\zeta}(\xi),\nonumber  \\\xi \ge 0.\nonumber\\
\end{eqnarray}

Using these notations, we can rewrite the third and the second equations in (\ref{new-eq-bound-corr-3-1}) as:
\begin{eqnarray}\label{eqs-eta_1^0-zeta_1^0}\frac{d\eta_{1}^{0}(\xi)}{d\xi} = - D_{\rm v,2}^{1/2}(0)\eta_{1}^{0}(\xi) + g_{\eta}(\xi),\  \  \ \xi \ge 0,\nonumber\\
\frac{d\zeta_{1}^{0}(\xi)}{d\xi} = D_{\rm v,2}^{1/2}(0)\zeta_{1}^{0}(\xi) + g_{\zeta}(\xi),\  \  \ \xi \ge 0.\nonumber\\
\end{eqnarray}

Using the positive definiteness of the matrix $D_{\rm v,2}^{1/2}(0)$ and the inequalities for $f_{\eta}(\xi)$, $f_{\zeta}(\xi)$, $x_{1}^{0}(\xi)$, $y_{1}^{0}(\xi)$ (see the equations (\ref{estimates-f_x,y}),(\ref{ineq-x_0^1}),(\ref{ineq-y_0^1})), we obtain (similarly to (\ref{x_0^1}),(\ref{y_1^0(0)})-(\ref{y_1^0})) the unique solution of the system (\ref{eqs-eta_1^0-zeta_1^0}) subject to the third and the fourth conditions in (\ref{transf-add-cond-for-corr-3-1}) and in (\ref{init-cond-corr-3-transf-1})
\begin{eqnarray}\label{eta_1^0-zeta_1^0}\eta_{1}^{0}(\xi) = \exp\big(- D_{\rm v,2}^{1/2}(0)\xi\big)\eta_{1}^{0}(0) + \int_{0}^{\xi}\exp\big(- D_{\rm v,2}^{1/2}(0)(\xi - \sigma)\big)g_{\eta}(\sigma)d\sigma,\  \ \xi \ge 0,\nonumber\\
\zeta_{1}^{0}(\xi) = - \int_{\xi}^{+ \infty}\exp\big(- D_{\rm v,2}^{1/2}(0)(\sigma - \xi)\big)g_{\zeta}(\sigma)d\sigma,\  \  \  \ \xi \ge 0,\nonumber\\
\end{eqnarray}
where
\begin{eqnarray}\label{eta_1^0(0)} \eta_{1}^{0}(0) = - \bar{\mu}_{2,1}(0) + 0.5D_{\rm v,2}^{-1/2}(0)\int_{0}^{+\infty}\exp\big(- D_{\rm v,2}^{1/2}(0)\sigma\big)g_{\zeta}(\sigma)d\sigma.\end{eqnarray}

The vector-valued functions $\eta_{1}^{0}(\xi)$ and $\zeta_{1}^{0}(\xi)$ satisfy the inequalities
\begin{eqnarray}\label{ineq-eta_1^0-zeta_1^0}\big\|\eta_{1}^{0}(\xi)\big\| \le \omega_{\eta}\exp(- 0.5\beta^{0}\xi),\  \  \  \ \big\|\zeta_{1}^{0}(\xi)\big\| \le \omega_{\zeta}\exp(- 0.5\beta^{0}\xi\big),\  \  \  \ \xi \ge 0,\end{eqnarray}
where $\omega_{\eta} > 0$ and $\omega_{\zeta} > 0$ are some constants.

Thus, the vector-valued functions $x^{0}_{1}(\xi)$, $y^{0}_{1}(\xi)$, $\eta_{1}^{0}(\xi)$, $\zeta_{1}^{0}(\xi)$ (see the equations (\ref{x_0^1}),(\ref{y_1^0}),(\ref{eta_1^0-zeta_1^0})) constitute the solution of the system (\ref{new-eq-bound-corr-3-1}) satisfying the conditions (\ref{transf-add-cond-for-corr-3-1}),(\ref{init-cond-corr-3-transf-1}).
Substitution of this solution into (\ref{transform-corr-3-1}) and use of (\ref{matr-Theta_0}) yield, after a routine algebra, the solution of the system (\ref{eq-bound-corr-3-1}) satisfying the conditions (\ref{additional-cond-for-corr-3-1}),(\ref{init-cond-for-z_2^0-1})
\begin{eqnarray}\label{boundary-correction-3-1}
z_{2,1}^{0}(\xi) = x_{1}^{0}(\xi) + 0.5D_{\rm v,2}^{-1/2}(0)y_{1}^{0}(\xi),\  \  \  \ \xi \ge 0,\nonumber\\
\lambda_{\rm v2,1}^{0}(\xi) = - D_{\rm v,2}^{1/2}(0)x_{1}^{0}(\xi) + 0.5y_{1}^{0}(\xi),\  \  \  \ \xi \ge 0.\nonumber\\
\lambda_{\rm u2,1}^{0}(\xi) = - D_{\rm v,2}^{1/2}(0)\eta_{1}^{0}(\xi) + 0.5\zeta_{1}^{0}(\xi),\  \  \  \ \xi \ge 0,\nonumber\\
\mu_{2,1}^{0}(\xi) = \eta_{1}^{0}(\xi) + 0.5D_{\rm v,2}^{-1/2}(0)\zeta_{1}^{0}(\xi),\  \  \  \ \xi \ge 0.\nonumber\\
\end{eqnarray}
Due to the estimates (\ref{ineq-x_0^1}),(\ref{ineq-y_0^1}),(\ref{ineq-eta_1^0-zeta_1^0}), this solution satisfies the inequalities
\begin{eqnarray}\label{ineq-for-bound-correc-3-1} \big\|z_{2,1}^{0}(\xi)\big\| \le b_{\rm z}^{0}\exp(- 0.5\beta^{0}\xi),\  \  \  \  \big\|\lambda_{\rm v 2,1}^{0}(\xi)\big\| \le b_{\rm v}^{0}\exp(- 0.5\beta^{0}\xi),\  \  \  \ \xi \ge 0,\nonumber\\
\big\|\lambda_{\rm u 2,1}^{0}(\xi)\big\| \le b_{\rm u}^{0}\exp(- 0.5\beta^{0}\xi),\  \  \  \ \big\|\mu_{2,1}^{0}(\xi)\big\| \le b_{\mu}^{0}\exp(- 0.5\beta^{0}\xi),\  \  \  \ \xi \ge 0,\nonumber\\
\end{eqnarray}
where $b_{\rm z}^{0} > 0$, $b_{\rm v}^{0} > 0$, $b_{\rm u}^{0} > 0$, $b_{\mu}^{0} > 0$ are some constants; $\beta^{0} > 0$ is the minimal eigenvalue of the matrix $D_{\rm u,2}^{1/2}(0)$.

Proceed to the solution of the system (\ref{eq-bound-corr-4-1}) subject to the conditions (\ref{additional-cond-for-corr-4-1}) and (\ref{termin-cond-for-lambda_u2^f-1}).
Similarly to (\ref{transform-corr-3-1}), we make the following transformation of the state variables in the problem  (\ref{eq-bound-corr-4-1}),(\ref{additional-cond-for-corr-4-1}),(\ref{termin-cond-for-lambda_u2^f-1}):
\begin{equation}\label{transform-corr-4-1} \left(\begin{array}{l} z_{2,1}^{f}(\rho)\\ \\ \lambda_{\rm v 2,1}^{f}(\rho)\end{array}\right) =\Theta(t_{f})\left(\begin{array}{l}x^{f}_{1}(\rho)\\ \\ y^{f}_{1}(\rho)\end{array}\right),\ \  \  \ \left(\begin{array}{l} \mu_{2,1}^{f}(\rho)\\ \\ \lambda_{\rm u 2,1}^{f}(\rho)\end{array}\right) = \Theta(t_{f})\left(\begin{array}{l}\eta^{f}_{1}(\rho)\\ \\ \zeta^{f}_{1}(\rho)\end{array}\right),\end{equation}
where $x^{f}_{1}(\rho) \in \mathbb{R}^{s}$, $y^{f}_{1}(\rho) \in \mathbb{R}^{s}$, $\eta^{f}_{1}(\rho) \in \mathbb{R}^{s}$, $\zeta^{f}_{1}(\rho) \in \mathbb{R}^{s}$ are new state variables.

The transformation (\ref{transform-corr-4-1}) converts the problem  (\ref{eq-bound-corr-4-1}),(\ref{additional-cond-for-corr-4-1}),(\ref{termin-cond-for-lambda_u2^f-1}) to the problem
\begin{eqnarray}\label{new-eq-bound-corr-4-1} \frac{dx^{f}_{1}(\rho)}{d\rho} = D_{\rm v,2}^{1/2}(t_{f})x^{f}_{1}(\rho),\  \  \ \rho \le 0,\nonumber\\
\frac{dy^{f}_{1}(\rho)}{d\rho} = - D_{\rm v,2}^{1/2}(t_{f})y^{f}_{1}(\rho),\  \  \ \rho \le 0,\nonumber\\
\frac{d\eta_{1}^{f}(\rho)}{d\rho} = 0.5D_{\rm v,2}^{-1/2}(t_{f})D_{\rm u,3}(t_{f})x_{1}^{f}(\rho) + 0.25D_{\rm v,2}^{-1/2}(t_{f})D_{\rm u,3}(t_{f})D_{\rm v,2}^{-1/2}(t_{f})y_{1}^{f}(\rho)\nonumber\\
- D_{\rm v,2}^{1/2}(t_{f})\eta_{1}^{f}(\rho),\  \ \rho \le 0,\nonumber\\
\frac{d\zeta_{1}^{f}(\rho)}{d\rho} = - D_{\rm u,3}(t_{f})x_{1}^{f}(\rho) - 0.5D_{\rm u,3}(t_{f})D_{\rm v,2}^{-1/2}(t_{f})y_{1}^{f}(\rho)\nonumber\\
+ D_{\rm v,2}^{1/2}(t_{f})\zeta_{1}^{f}(\rho),\  \ \rho \le 0,\nonumber\\
\end{eqnarray}
\begin{equation}\label{transf-add-cond-for-corr-4-1}\lim_{\rho \rightarrow - \infty}x^{f}_{1}(\rho) = 0,\  \  \ \lim_{\rho \rightarrow - \infty}y^{f}_{1}(\rho) = 0\  \  \ \lim_{\rho \rightarrow - \infty}\eta_{1}^{f}(\rho) = 0,\  \ \  \lim_{\rho \rightarrow - \infty}\zeta_{1}^{f}(\rho) = 0, \end{equation}
\begin{eqnarray}\label{term-cond-corr-4-transf-1} x^{f}_{1}(0) = 0.5z_{2,1}^{f}(0) + 0.5D_{\rm v,2}^{-1/2}(t_{f})\bar{\lambda}_{\rm v 2,1}(t_{f}),\nonumber\\
y^{f}_{1}(0) = D_{\rm v,2}^{1/2}(t_{f})z_{2,1}^{f}(0) - \bar{\lambda}_{\rm v 2,1}(t_{f}),\nonumber\\
\eta_{1}^{f}(0) = 0.5\mu_{2,1}^{f}(0) + 0.5D_{\rm v,2}^{-1/2}(t_{f})\bar{\lambda}_{\rm u2,1}(t_{f}),\nonumber\\
\zeta_{1}^{f}(0) = D_{\rm v,2}^{1/2}(t_{f})\mu_{2,1}^{f}(0) - \bar{\lambda}_{\rm u2,1}(t_{f}).\nonumber\\
\end{eqnarray}

Solving the system of differential equations (\ref{new-eq-bound-corr-4-1}) subject to the conditions (\ref{transf-add-cond-for-corr-4-1})-(\ref{term-cond-corr-4-transf-1}), we obtain (similarly to (\ref{x_0^1}),(\ref{y_1^0}),(\ref{eta_1^0-zeta_1^0}))
\begin{eqnarray}\label{x_1^f-y_1^f-eta_1^f-zeta_1^f} x^{f}_{1}(\rho) = \exp\big(D_{\rm v,2}^{1/2}(t_{f})\rho\big)D_{\rm v,2}^{- 1/2}(t_{f})\bar{\lambda}_{\rm v2,1}(t_{f}),\  \  \  \ y^{f}_{1}(\rho) \equiv 0,\nonumber\\
\eta_{1}^{f}(\rho) =
0.5\int_{- \infty}^{\rho}\exp\big(- D_{\rm v,2}^{1/2}(t_{f})(\rho - \sigma)\big)D_{\rm v,2}^{-1/2}(t_{f})D_{\rm u,3}(t_{f})D_{\rm v,2}^{- 1/2}(t_{f})\times \nonumber\\
\exp\big(D_{\rm v,2}^{1/2}(t_{f})\sigma\big)\bar{\lambda}_{\rm v2,1}(t_{f})d\sigma,\nonumber\\
\zeta_{1}^{f}(\rho) = \exp\big(D_{\rm v,2}^{1/2}(t_{f})\rho\big)\zeta_{1}^{f}(0)\nonumber\\
- \int_{0}^{\rho}\exp\big(D_{\rm v,2}^{1/2}(t_{f})(\rho - \sigma)\big)D_{\rm u,3}(t_{f})D_{\rm v,2}^{- 1/2}(t_{f})\exp\big(D_{\rm v,2}^{1/2}(t_{f})\sigma\big)\bar{\lambda}_{\rm v2,1}(t_{f})d\sigma,\nonumber\\
\end{eqnarray}
where $\rho \le 0$,
\begin{eqnarray}\label{zeta_1^f(0)} \zeta_{1}^{f}(0) = - 2\bar{\lambda}_{\rm u2,1}(t_{f})\nonumber\\
+ \int_{- \infty}^{0}\exp\big(D_{\rm v,2}^{1/2}(t_{f})\sigma\big)D_{\rm u,3}(t_{f})D_{\rm v,2}^{- 1/2}(t_{f})\exp\big(D_{\rm v,2}^{1/2}(t_{f})\sigma\big)\bar{\lambda}_{\rm v2,1}(t_{f})d\sigma.\end{eqnarray}

Substitution of this solution into (\ref{transform-corr-4-1}) and use of the equation (\ref{matr-Theta_0}) yield, after a routine rearrangement, the solution of the system (\ref{eq-bound-corr-4-1}) satisfying the conditions (\ref{additional-cond-for-corr-4-1}),(\ref{termin-cond-for-lambda_u2^f-1})
\begin{eqnarray}\label{boundary-correction-4-1}
z_{2,1}^{f}(\eta) = \exp\big(D_{\rm v,2}^{1/2}(t_{f})\rho\big)D_{\rm v,2}^{- 1/2}(t_{f})\bar{\lambda}_{\rm v2,1}(t_{f}),\  \  \  \ \rho \le 0,\nonumber\\
\lambda_{\rm v 2,1}^{f}(\rho) = - \exp\big( D_{\rm v,2}^{1/2}(t_{f})\rho\big)\bar{\lambda}_{\rm v2,1}(t_{f}),\  \  \  \ \rho \le 0,\nonumber\\
\lambda_{\rm u2,1}^{f}(\rho) = - D_{\rm v,2}^{1/2}(t_{f})\eta_{1}^{f}(\rho) + 0.5\zeta_{1}^{f}(\rho),\  \  \  \ \rho \le 0,\nonumber\\
\mu_{2,1}^{f}(\rho) = \eta_{1}^{f}(\rho) + 0.5D_{\rm v,2}^{-1/2}(t_{f})\zeta_{1}^{f}(\rho),\  \  \  \ \rho \le 0.\nonumber\\
\end{eqnarray}

This solution satisfies the inequalities
\begin{eqnarray}\label{ineq-for-bound-correc-4-1} \big\|z_{2,1}^{f}(\rho)\big\| \le b^{f}_{z}\exp(\beta^{f}\rho),\  \  \  \ \big\|\lambda_{\rm v 2,1}^{f}(\rho)\big\| \le b^{f}_{\rm v}\exp(\beta^{f}\rho),\  \  \  \ \rho \le 0,\nonumber\\
\big\|\lambda_{\rm u2,1}^{f}(\rho)\big\| \le b^{f}_{\rm u}\exp(0.5\beta^{f}\rho),\  \  \  \ \big\|\mu_{ 2,1}^{f}(\rho)\big\| \le b^{f}_{\mu}\exp(0.5\beta^{f}\rho),\  \  \  \ \rho \le 0,\nonumber\\
\end{eqnarray}
where $b^{f}_{z} > 0$, $b^{f}_{\rm v}>0$, $b^{f}_{\rm u}>0$, $b^{f}_{\mu}>0$  are some constants; $\beta^{f} > 0$ is the minimal eigenvalue of the matrix $D_{\rm v,2}^{1/2}(t_{f})$.

\subsection{Justification of the first-order asymptotic solution to the problem (\ref{equiv-system})-(\ref{equiv-cond})}

Using the results of the previous subsection,
we can rewrite the components of the first-order asymptotic solution to the problem (\ref{equiv-system})-(\ref{equiv-cond}) (see equation (\ref{first-order-asympt-solution})) as follows:
\begin{eqnarray}\label{zero-order-asympt-solution-new-form}
z_{1}^{1}(t,\varepsilon) = \bar{z}_{1,0}(t) + \varepsilon\big[\bar{z}_{1,1}(t) + z_{1,1}^{0}(t/\varepsilon)\big],\nonumber\\
z_{2}^{1}(t,\varepsilon) = \bar{z}_{2,0}(t) + z_{2,0}^{0}(t/\varepsilon) + \varepsilon\big[\bar{z}_{2,1}(t) + z_{2,1}^{0}(t/\varepsilon) + z_{2,1}^{f}\big((t - t_{f})/\varepsilon\big)\big],\nonumber\\
\lambda_{\rm u1}^{1}(t,\varepsilon) = \bar{\lambda}_{\rm u1,0}(t) + \varepsilon\big[\bar{\lambda}_{\rm u1,1}(t) + \lambda_{\rm u1,1}^{0}(t/\varepsilon)\big],\nonumber\\
\lambda_{\rm u2}^{1}(t,\varepsilon) = \lambda_{\rm u2,0}^{0}(t/\varepsilon) + \varepsilon\big[\bar{\lambda}_{\rm u2,1}(t) + \lambda_{\rm u2,1}^{0}(t/\varepsilon) + \lambda_{\rm u2,1}^{f}\big((t - t_{f})/\varepsilon\big)\big],\nonumber\\
\lambda_{\rm v1}^{1}(t,\varepsilon) = \bar{\lambda}_{\rm v1,0}(t) + \varepsilon\bar{\lambda}_{\rm v1,1}(t),\nonumber\\
\lambda_{\rm v2}^{1}(t,\varepsilon) = \lambda_{\rm v2,0}^{0}(t/\varepsilon) + \varepsilon\big[\bar{\lambda}_{\rm v2,1}(t) + \lambda_{\rm v2,1}^{0}(t/\varepsilon) + \lambda_{\rm v2,1}^{f}\big((t - t_{f})/\varepsilon\big)\big],\nonumber\\
\mu_{1}^{1}(t,\varepsilon) = \bar{\mu}_{1,0}(t) + \varepsilon\big[\bar{\mu}_{1,1}(t) + \mu_{1,1}^{0}(t/\varepsilon)\big],\nonumber\\
\mu_{2}^{1}(t,\varepsilon) = \bar{\mu}_{2,0}(t) + \mu_{2,0}^{0}(t/\varepsilon) + \varepsilon\big[\bar{\mu}_{2,1}(t) + \mu_{2,1}^{0}(t/\varepsilon) + \mu_{2,1}^{f}\big((t - t_{f})/\varepsilon\big)\big].\nonumber\\
\end{eqnarray}
\begin{lemma}\label{justific-asympt-solution} Let the assumptions (A1)-(A7) be satisfied. Then, there exists a number $\varepsilon_{0} > 0$ such that, for all $t \in [0,t_{f}]$ and $\varepsilon \in (0,\varepsilon_{0}]$,
the following inequalities are valid:
\begin{eqnarray}\label{ineq-zero-order-solution}\big\|z_{j}(t,\varepsilon) - z_{j}^{1}(t,\varepsilon)\big\| \le c_{0}\varepsilon^{2},\  \  \  \ \big\|\lambda_{\rm uj}(t,\varepsilon) - \lambda_{\rm uj}^{1}(t,\varepsilon)\big\|\le c_{0}\varepsilon^{2},\  \  \  \ j = 1,2, \nonumber\\
\big\|\lambda_{\rm vj}(t,\varepsilon) - \lambda_{\rm vj}^{1}(t,\varepsilon)\big\|\le c_{0}\varepsilon^{2},\  \  \  \
\big\|\mu_{j}(t,\varepsilon) - \mu_{j}^{1}(t,\varepsilon)\big\| \le c_{0}\varepsilon^{2},\  \  \  \ j = 1,2,\nonumber\\
\end{eqnarray}
where 
$${\rm col}\big(z_{1}(t,\varepsilon),z_{2}(t,\varepsilon),\lambda_{\rm u 1}(t,\varepsilon),\lambda_{\rm u 2}(t,\varepsilon),\lambda_{\rm v1}(t,\varepsilon),\lambda_{\rm v2}(t,\varepsilon),\mu_{1}(t,\varepsilon),\mu_{2}(t,\varepsilon)\big)$$ is the unique solution of the singularly perturbed boundary-value problem (\ref{equiv-system}),(\ref{equiv-cond});
$c_{0} > 0$ is some constant independent of $\varepsilon$.
\end{lemma}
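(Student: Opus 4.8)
The plan is to show that the constructed first-order formal solution satisfies the singularly perturbed problem (\ref{equiv-system})--(\ref{equiv-cond}) up to an $O(\varepsilon^{2})$ residual in both the differential equations and the boundary conditions, and then to convert this residual bound into the stated bound on the difference between the exact and the approximate solutions by means of a uniform (in $\varepsilon$) stability estimate for the associated linear boundary-value operator.

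First I would substitute the stacked formal solution
\[Y^{1}(t,\varepsilon)={\rm col}\big(z_{1}^{1},z_{2}^{1},\lambda_{\rm u1}^{1},\lambda_{\rm u2}^{1},\lambda_{\rm v1}^{1},\lambda_{\rm v2}^{1},\mu_{1}^{1},\mu_{2}^{1}\big)(t,\varepsilon)\]
from (\ref{zero-order-asympt-solution-new-form}) into (\ref{equiv-system}) and (\ref{equiv-cond}) and collect the discrepancy. By the very construction via the Boundary Functions Method, all terms of orders $\varepsilon^{0}$ and $\varepsilon^{1}$ cancel identically: the outer terms annihilate the slow part of the residual, while the $\xi$- and $\rho$-boundary corrections annihilate the corresponding fast parts at $t=0$ and $t=t_{f}$. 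What remains in the equations are (i) genuine $\varepsilon^{2}$-order terms produced by the Taylor expansions of the coefficient matrices about $t=0$ and $t=t_{f}$ and by the truncation of the two-term series, and (ii) boundary-layer functions of one endpoint evaluated at the opposite endpoint, e.g.\ $z_{2,0}^{0}(t_{f}/\varepsilon)$ or $\lambda_{\rm v2,1}^{f}(-t_{f}/\varepsilon)$. By the exponential estimates already established in (\ref{ineq-for-bound-correc-3}), (\ref{ineq-bound-terms-1-1}), (\ref{ineq-for-bound-correc-3-1}) and (\ref{ineq-for-bound-correc-4-1}), every such foreign term is bounded by $\exp(-c/\varepsilon)$ for some $c>0$ and is therefore $o(\varepsilon^{2})$. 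The same inspection of (\ref{equiv-cond}) shows that the boundary conditions are met up to exponentially small terms, since the matching conditions (\ref{init-cond-for-z_2^0-mu_2^0}), (\ref{termin-cond-for-lambda_u2^f}), (\ref{outer-solution-cond-1}), (\ref{init-cond-for-z_2^0-1}) and (\ref{termin-cond-for-lambda_u2^f-1}) were imposed precisely to cancel the $\varepsilon^{0}$- and $\varepsilon^{1}$-order endpoint values. Hence the residual $r(t,\varepsilon)$ in the equations and the residual $q(\varepsilon)$ in the boundary data satisfy $\|r\|+\|q\|\le c_{1}\varepsilon^{2}$ uniformly on $[0,t_{f}]$.

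Next I would pass to the error $\Delta(t,\varepsilon)=Y(t,\varepsilon)-Y^{1}(t,\varepsilon)$, where $Y$ is the exact solution guaranteed by Proposition \ref{Stackelberg-solution}. Linearity of (\ref{equiv-system})--(\ref{equiv-cond}) implies that $\Delta$ solves the \emph{same} linear singularly perturbed boundary-value problem, now driven by $-r(t,\varepsilon)$ in the equations and $-q(\varepsilon)$ in the boundary data, with the fast blocks carrying the $\varepsilon$-scaling of (\ref{block-form-solution}). Thus (\ref{ineq-zero-order-solution}) will follow once I establish the uniform a priori bound: there exist $\varepsilon_{0}>0$ and $C>0$, independent of $\varepsilon$, such that every solution of (\ref{equiv-system})--(\ref{equiv-cond}) with right-hand side $h(t,\varepsilon)$ and boundary data $p(\varepsilon)$ satisfies $\|\Delta(\cdot,\varepsilon)\|\le C\big(\max_{t}\|h(t,\varepsilon)\|+\|p(\varepsilon)\|\big)$ for all $\varepsilon\in(0,\varepsilon_{0}]$.

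The main difficulty lies in proving this uniform stability estimate, which I would do by verifying the hypotheses of the general singular-perturbation boundary-value theory in the conditionally stable case (\cite{Vasil'eva-Butuzov-Kalachev,Esipova-Diff-Eq}). Two structural facts must be checked. After the diagonalizing transformation built from $\Theta(t)$ in (\ref{matr-Theta_0}), the fast subsystem decouples to leading order into a part governed by $D_{\rm v,2}^{1/2}(t)$ and a part governed by $-D_{\rm v,2}^{1/2}(t)$; since $D_{\rm v,2}(t)$ is uniformly positive definite on $[0,t_{f}]$ by Proposition \ref{init-problem-transf}, these furnish a genuine exponential dichotomy, with the stable directions absorbed by the $t=0$ layer and the unstable directions by the $t=t_{f}$ layer. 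The reduced (slow) operator is exactly the boundary-value problem (\ref{set-four-diff-eq-outer-sol}),(\ref{outer-solution-cond}), which is invertible by assumption (A7). Combining the dichotomy of the fast part with the nondegeneracy of the slow part yields, through a Green's-function construction for the full problem, a solution-operator bound uniform in $\varepsilon$. The delicate point is precisely that there are \emph{four} fast modes split two-and-two between the endpoints (cf.\ Remark \ref{comparison-cheap-leader-follower}), so the dichotomy and the compatibility of both boundary layers with the slow problem must be tracked simultaneously; this is the step that does not reduce to the simpler one-sided-layer situation and carries the bulk of the technical work. Once the uniform constant $C$ is secured, applying the bound to $\Delta$ with $h=-r$, $p=-q$ and using $\|r\|+\|q\|\le c_{1}\varepsilon^{2}$ gives $\|\Delta(\cdot,\varepsilon)\|\le Cc_{1}\varepsilon^{2}=:c_{0}\varepsilon^{2}$, which, read off componentwise in the scaled variables of (\ref{block-form-solution}), is exactly (\ref{ineq-zero-order-solution}).
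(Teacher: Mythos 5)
Your proposal is correct, and it reaches the same destination by the same fundamental route as the paper: both proofs reduce the lemma to checking that (\ref{equiv-system})--(\ref{equiv-cond}) is a linear, conditionally stable singularly perturbed boundary-value problem and then invoke the general justification theory of \cite{Esipova-Diff-Eq,Vasil'eva-Butuzov-Kalachev}. The differences are organizational. The paper's proof consists only of hypothesis verification: regularity and linearity of the coefficients, the fact that the boundary data for the fast variables are split between the two endpoints (conditions for $z_{2}$, $\mu_{2}$ at $t=0$ and for $\lambda_{\rm v2}$, $\lambda_{\rm u2}$ at $t=t_{f}$), and the spectral dichotomy of the fast-mode coefficient matrix $\Phi(t,\varepsilon)$ in (\ref{matr-Phi}), which it establishes by computing $\det\big(\Phi(t,0)\big)=\big[\det D_{\rm v,2}(t)\big]^{2}$ via row permutations and the Frobenius formula, obtaining the eigenvalues $\pm\kappa_{k}(t)$ of $\pm D_{\rm v,2}^{1/2}(t)$, and extending the $2s$/$2s$ splitting to small $\varepsilon>0$ by continuity of eigenvalues uniform in $t$; after that, the error bound is quoted wholesale from \cite{Esipova-Diff-Eq}, with no residual computation at all, since that theorem is precisely a justification theorem for Boundary-Functions-Method expansions and contains the residual and stability arguments internally. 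You instead unpack that internal mechanism explicitly (an $O(\varepsilon^{2})$ residual in equations and boundary data, then a uniform-in-$\varepsilon$ solution-operator bound built from the exponential dichotomy plus invertibility of the reduced problem (\ref{set-four-diff-eq-outer-sol}),(\ref{outer-solution-cond}) under (A7)), and you obtain the dichotomy from the $\Theta(t)$ block-diagonalization of (\ref{matr-Theta_0}) rather than from the determinant computation -- an equivalent derivation, since that transformation block-triangularizes the fast system with diagonal blocks $\pm D_{\rm v,2}^{1/2}$. What the paper's route buys is economy: it matches the cited theorem exactly and avoids redoing its proof. What your route buys is transparency about where each hypothesis is used; but note that your crucial stability estimate is still only deferred to the same references, so your version is not actually more self-contained, only more verbose. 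Neither route has a gap.
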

\begin{proof} First of all, let us observe the following. \\
{\bf O1.} The differential system (\ref{equiv-system}) is a linear homogeneous one. Furthermore, due to Proposition \ref{init-problem-transf} and the equations (\ref{matr-S_u-S_v-Suv}),(\ref{matr-A-D_u-S_u-blocks}), all the matrix-valued coefficients in this system are continuously differentiable in the interval $[0,t_{f}]$. Moreover, the matrix-valued coefficients $D_{\rm v,1}(t)$ and $D_{\rm v,2}(t)$ are twice continuously differentiable in this interval.\\ 
{\bf O2.} The boundary conditions (\ref{equiv-cond}) are explicit and linear. Moreover, the boundary conditions for the fast state variables $z_{2}(t)$ and $\lambda_{\rm v 2}(t)$, and $\mu_{2}(t)$ and $\lambda_{\rm u2}(t)$ are given at the opposite ends of the interval $[0,t_{f}]$. Namely, the conditions for $z_{2}(t)$ and $\mu_{2}(t)$ are given at $t = 0$, while the conditions for $\lambda_{\rm v 2}(t)$ and $\lambda_{\rm u2}(t)$ are given at $t = t_{f}$.\\

Let us consider the matrix of the coefficients for the fast state vector ${\rm col}\big(z_{2}(t),\allowbreak\lambda_{\rm u 2}(t),\lambda_{\rm v2}(t),\mu_{2}(t)\big)$ in the set of the fast modes of the system (\ref{equiv-system}), i.e., the matrix
\begin{equation}\label{matr-Phi} \Phi(t,\varepsilon) \stackrel{\triangle}{=}\left(\begin{array}{l}\  \  \ \varepsilon A_{4}(t)\  \  \ - \varepsilon^{2}S_{\rm u,3}(t)\  \  \ - I_{s}\  \  \  \  \  \  \  \  \  \ 0\\
- D_{\rm u,3}(t)\  \  \  - \varepsilon A_{4}^{T}(t)\  \  \  \  \  \  \  \  \ 0\  \  \  \  \  \  \  \  \ D_{\rm v,2}(t)\\
- D_{\rm v,2}(t)\  \  \  \  \  \  \  \  \ 0\  \  \  \  \  \  \ - \varepsilon A_{4}^{T}(t)\  \  \  \  \  \  \ 0\\
\  \  \  \ 0\  \  \  \  \  \  \  \  \  \  \  \  \  \
I_{s}\  \  \  \  \  \ - \varepsilon^{4}G_{\rm u,v}(t)\  \  \ \varepsilon A_{4}(t)
\end{array}\right),\  \  \  t \in [0,t_{f}],\  \  \  \varepsilon > 0.\end{equation}

The matrix-valued function $\Phi(t,\varepsilon)$ is of the dimension $4s \times 4s$ and it is continuous with respect to $(t,\varepsilon) \in [0,t_{f}]\times(- \infty,+ \infty)$. By $\nu_{i}(t,\varepsilon)$, $(i = 1,...,2s,2s + 1,...,4s)$, we denote the eigenvalues of the matrix $\Phi(t,\varepsilon)$.
We are going to show that, for all $t \in [0,t_{f}]$ and all sufficiently small $\varepsilon > 0$, $2s$ eigenvalues $\nu_{i}(t,\varepsilon)$, $(i = 1,...,2s)$ of this matrix satisfy the inequality
\begin{equation}\label{Re-mu_i>0} {\rm Re}\big(\nu_{i}(t,\varepsilon)\big) \ge \alpha,\  \  \  \ i = 1,...,2s,\end{equation}
while the other eigenvalues satisfy the inequality
\begin{equation}\label{Re-mu_j<0} {\rm Re}\big(\nu_{j}(t,\varepsilon)\big) \le - \alpha,\  \  \  \ j = 2s+1,...,4s,\end{equation}
where $\alpha > 0$ is some constant independent of $\varepsilon$.

Setting $\varepsilon = 0$ in (\ref{matr-Phi}), we obtain
\begin{eqnarray}\label{Phi-eps=0} \Phi(t,0) = \left(\begin{array}{l}\  \  \  \ 0\  \  \  \  \  \  \  \  \ 0\  \  \ - I_{s}\  \  \  \  \  \  \  \  \  \  0\\
- D_{\rm u,3}(t)\  \  \ 0\  \  \  \  \  \  \ 0 \  \  \  \  \  \  \  \ D_{\rm v,2}(t)\\
- D_{\rm v,2}(t)\  \  \ 0\  \  \  \  \  \  \ 0\  \  \  \  \  \  \  \  \  \  \ 0\\
\  \  \  \ 0\  \  \  \  \  \  \  \ I_{s}\  \  \  \  \  \  \ 0\  \  \  \  \  \  \  \  \  \  \ 0
\end{array}\right),\  \  \  \ t \in [0,t_{f}].\end{eqnarray}

Permutation of the first and third rows, as well as the second and fourth rows of the matrix $\Phi(t,0)$, yield the matrix
\begin{eqnarray}\label{matr-Psi}\Psi(t) = \left(\begin{array}{l}- D_{\rm v,2}(t)\  \  \ 0\  \  \  \  \  \  \ 0\  \  \  \  \  \  \ 0\\
\  \  \  \ 0\  \  \  \  \  \  \  \ I_{s}\  \  \  \  \  \  \ 0\  \  \  \  \  \  \ 0\\
\  \  \  \ 0\  \  \  \  \  \  \  \  \ 0\  \  \   - I_{s}\  \  \  \  \  \  0\\
- D_{\rm u,3}(t)\  \  \ 0\  \  \  \  \  \  \ 0\  \  \  \  \ D_{\rm v,2}(t)\end{array}\right),\  \  \  \ t \in [0,t_{f}].\nonumber
\end{eqnarray}

The determinants of the matrices $\Phi(t,0)$ and $\Psi(t)$ are equal to each other and $\det\big(\Phi(t,0)\big) = \det\big(\Psi(t)\big) = \big[\det\big(D_{\rm v,2}(t)\big)\big]^{2}$, $t \in [0,t_{f}]$. Since $D_{\rm v,2}(t)$ is a positive definite matrix for all $t \in [0,t_{f}]$, then $\det\big(\Phi(t,0)\big) \neq 0$ for all $t \in [0,t_{f}]$. Therefore, all the eigenvalues of the matrix $\Phi(t,0)$ are non-zero for all $t \in [0,t_{f}]$. Using this observation and the Frobenius formula for the determinant of a block-form matrix \cite{Frobenius-formula}, we obtain that any eigenvalue $\kappa(t)$, $t \in [0,t_{f}]$ of the matrix $\Phi(t,0)$ satisfies the equation
\begin{eqnarray}\label{eq-for-kappa}\Big[\det\Big(D_{\rm v,2}(t) - \big(\kappa(t)\big)^{2}I_{s}\Big)\Big]^{2} = 0,\  \  \  \ t \in [0,t_{f}].\nonumber
\end{eqnarray}

Solving this equation, we obtain $2s$ positive and $2s$ negative eigenvalues of the matrix $\Phi(t,0)$. Namely, the positive eigenvalues are $\kappa_{1}(t)$,...,$\kappa_{s}(t)$,$\kappa_{1}(t)$,...,$\kappa_{s}(t)$, while the negative eigenvalues are $-\kappa_{1}(t)$,...,$-\kappa_{s}(t)$,$-\kappa_{1}(t)$,...,$-\kappa_{s}(t)$, where $\kappa_{k}(t) > 0$, $(k = 1,...,s)$, $t \in [0,t_{f}]$ are the eigenvalues of the symmetric positive definite matrix $D_{\rm v,2}^{1/2}(t)$.

Since the matrix-valued function $D_{\rm v,2}^{1/2}(t)$ is continuous in the interval $[0,t_{f}]$, then, due to the results of \cite{Coddington-Levinson,Sibuya}, the functions $\kappa_{k}(t)$, $(k = 1,...,s)$ also are continuous for $t \in [0,t_{f}]$. This feature, along with the aforementioned positiveness of these functions, directly yields the existence of a positive number $\alpha$ such that the following inequality is satisfied:
\begin{equation}\label{ineq-kappa_k} \kappa_{k}(t) \ge 2\alpha,\  \  \  \ t\in [0,t_{f}],\  \  \  \ k = 1,...,s.\end{equation}

Furthermore, since, for any $t \in [0,t_{f}]$, the matrix-valued function $\Phi(t,\varepsilon)$ is continuous with respect to $\varepsilon \in (- \infty,+ \infty)$, then
\begin{eqnarray}\label{nu(t,0)=kappa(t)}  \nu_{i_{1}}(t,0) = \kappa_{i_{1}}(t),\  \  \ i_{1} = 1,...,s,\  \  \  \ \nu_{i_{2}}(t,0) = \kappa_{i_{2}-s}(t),\  \  \ i_{2} = s+1,...,2s,\nonumber\\ t \in [0,t_{f}],\nonumber\\
\nu_{i_{1}+2s}(t,0) = -\kappa_{i_{1}}(t),\  \  \ i_{1} = 1,...,s,\  \  \  \ \nu_{i_{2}+2s}(t,0) = - \kappa_{i_{2}-s}(t),\  \  \ i_{2} = s+1,...,2s,\nonumber\\ t \in [0,t_{f}],\nonumber
\end{eqnarray}
meaning, along with (\ref{ineq-kappa_k}),
\begin{equation}\label{ineq-nu_i} \nu_{i}(t,0) \ge 2\alpha,\  \  \  \ \nu_{i+2s}(t,0) \le -  2\alpha,\  \  \ t \in [0,t_{f}],\  \  \ i = 1,...,2s.\end{equation}

The matrix-valued function $\Phi(t,\varepsilon)$, being continuous with respect to $(t,\varepsilon) \in [0,t_{f}]\times(-\infty,+\infty)$, is continuous in $\varepsilon \in(-\infty,+\infty)$ uniformly with respect to $t \in [0,t_{f}]$. Therefore, by virtue of \cite{Coddington-Levinson,Sibuya}, the eigenvalues $\nu_{i}(t,\varepsilon)$, $(i = 1,...,2s)$ and $\nu_{j}(t,\varepsilon)$, $(j = 2s + 1,...,4s)$ are continuous in $\varepsilon \in(-\infty,+\infty)$ uniformly with respect to $t \in [0,t_{f}]$. This observation, along with the inequalities in (\ref{ineq-nu_i}), yields the existence of a number $\bar{\varepsilon} > 0$ such that, for all $t \in [0,t_{f}]$ and all $\varepsilon \in (0,\bar{\varepsilon}]$, the inequalities (\ref{Re-mu_i>0}) and (\ref{Re-mu_j<0}) are satisfied. The validity of these inequalities means that the singularly perturbed boundary-value problem (\ref{equiv-system})-(\ref{equiv-cond}) is of the conditionally stable type (\cite{Vasil'eva-Butuzov-Kalachev,Esipova-Diff-Eq}). Taking into account this feature of the problem (\ref{equiv-system})-(\ref{equiv-cond}), as well as the aforementioned observations O1 and O2, and using the results of the work \cite{Esipova-Diff-Eq}, we immediately obtain the statements of the lemma.
\end{proof}

\section{Main results}

\subsection{Asymptotic approximation of the Stackelberg solution to the game (\ref{new-eq})-(\ref{new-perf-ind-2})}

For the sake of further analysis, we partition the matrix $B_{\rm u}(t)$ into blocks as:
\begin{eqnarray}\label{B_u-blocks} B_{\rm u}(t) = \left(\begin{array}{c}B_{\rm u,1}(t)\\ B_{\rm u,2}(t)\end{array}\right),\  \  \  \ t \in [0,t_{f}],\end{eqnarray}
where the matrices $B_{\rm u,1}(t)$ and $B_{\rm u,2}(t)$ are of the dimensions $(n - s)\times r$ and $s\times r$, respectively.

Using the equations (\ref{new-matr-B_uv}),(\ref{u*-v*}),(\ref{block-form-solution}),(\ref{B_u-blocks}), we can rewrite the components of the solution to the Stackelberg game (\ref{new-eq})--(\ref{new-perf-ind-2}) in the form
\begin{eqnarray}\label{u*-v*-new-form} u^{*}(t,\varepsilon) = - B_{\rm u,1}^{T}(t)\lambda_{\rm u1}(t,\varepsilon) - \varepsilon B_{\rm u,2}^{T}(t)\lambda_{\rm u2}(t,\varepsilon),\  \  \ v^{*}(t,\varepsilon) = - \frac{1}{\varepsilon}\lambda_{\rm v2}(t,\varepsilon),\nonumber\\ t \in [0,t_{f}],\ \ \varepsilon > 0.\nonumber\\
\end{eqnarray}

Along with the optimal controls (\ref{u*-v*-new-form}), we consider the following controls of the leader and the follower, respectively:
\begin{eqnarray}\label{hat-u-v} \widehat{u}(t,\varepsilon) = - B_{\rm u,1}^{T}(t)\lambda_{\rm u1}^{1}(t,\varepsilon) - \varepsilon B_{\rm u,2}^{T}(t)\lambda_{\rm u2}^{1}(t,\varepsilon),\  \  \ \widehat{v}(t,\varepsilon) = - \frac{1}{\varepsilon}\lambda_{\rm v2}^{1}(t,\varepsilon),\nonumber\\ t \in [0,t_{f}],\ \ \varepsilon > 0.\nonumber\\
\end{eqnarray}

\begin{remark}\label{obtaining-approximate-hat-u-v} The control $\widehat{u}(t,\varepsilon)$ is obtained from the control $u^{*}(t,\varepsilon)$ by replacing in the later $\lambda_{\rm u1}(t,\varepsilon)$ and $\lambda_{\rm u2}(t,\varepsilon)$ with their asymptotic approximations $\lambda_{\rm u1}^{1}(t,\varepsilon)$ and $\lambda_{\rm u2}^{1}(t,\varepsilon)$, respectively (see the equation (\ref{zero-order-asympt-solution-new-form})). Similarly, the control $\widehat{v}(t,\varepsilon)$ is obtained from the control $v^{*}(t,\varepsilon)$ by replacing in the later $\lambda_{\rm v2}(t,\varepsilon)$ with its asymptotic approximation $\lambda_{\rm v2}^{1}(t,\varepsilon)$.
\end{remark}

\begin{theorem}\label{asymp-u*-v*}
Let the assumptions (A1)-(A7) be valid. Then, for all $t \in [0,t_{f}]$ and all $\varepsilon \in (0,\varepsilon_{0}]$, the open-loop Stackelberg solution $\big(u^{*}(t,\varepsilon),v^{*}(t,\varepsilon)\big)$ to the game (\ref{new-eq})-(\ref{new-perf-ind-2}) satisfies the inequalities
\begin{eqnarray}\label{ineq-u*-v*}
\big\|u^{*}(t,\varepsilon) - \widehat{u}(t,\varepsilon)\big\| \le c_{1}\varepsilon^{2} + c_{2}\varepsilon^{3},\nonumber\\
\big\|v^{*}(t,\varepsilon) -  \widehat{v}(t,\varepsilon)\big\| \le c_{0}\varepsilon,\nonumber\\
\end{eqnarray}
where the numbers $\varepsilon_{0} > 0$ and $c_{0} > 0$ are introduced in Lemma \ref{justific-asympt-solution};
\begin{eqnarray}c_{1} = c_{0}\Big(\max_{t \in [0,t_{f}]}\big\|B_{\rm u,1}(t)\big\|\Big),\  \  \  \ c_{2} = c_{0}\Big(\max_{t \in [0,t_{f}]}\big\|B_{\rm u,2}(t)\big\|\Big). \nonumber\\
\end{eqnarray}
\end{theorem}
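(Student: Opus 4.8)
The plan is to derive both inequalities directly from Lemma \ref{justific-asympt-solution}, which already contains all the analytic content; the present argument is then purely a matter of the triangle inequality, submultiplicativity of the Euclidean norm, and compactness of $[0,t_{f}]$. First I would subtract the approximate controls (\ref{hat-u-v}) from the exact controls (\ref{u*-v*-new-form}) term by term, obtaining
\[
u^{*}(t,\varepsilon) - \widehat{u}(t,\varepsilon) = - B_{\rm u,1}^{T}(t)\big[\lambda_{\rm u1}(t,\varepsilon) - \lambda_{\rm u1}^{1}(t,\varepsilon)\big] - \varepsilon B_{\rm u,2}^{T}(t)\big[\lambda_{\rm u2}(t,\varepsilon) - \lambda_{\rm u2}^{1}(t,\varepsilon)\big]
\]
together with $v^{*}(t,\varepsilon) - \widehat{v}(t,\varepsilon) = - \varepsilon^{-1}\big[\lambda_{\rm v2}(t,\varepsilon) - \lambda_{\rm v2}^{1}(t,\varepsilon)\big]$. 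This reduces the whole estimate to the component errors $\lambda_{\rm u1} - \lambda_{\rm u1}^{1}$, $\lambda_{\rm u2} - \lambda_{\rm u2}^{1}$ and $\lambda_{\rm v2} - \lambda_{\rm v2}^{1}$, each of which is bounded by $c_{0}\varepsilon^{2}$ in (\ref{ineq-zero-order-solution}).

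For the leader's control I would take Euclidean norms, apply submultiplicativity and the identity $\|A^{T}\| = \|A\|$, and insert the two bounds $\|\lambda_{\rm uj}(t,\varepsilon) - \lambda_{\rm uj}^{1}(t,\varepsilon)\| \le c_{0}\varepsilon^{2}$, $(j=1,2)$. This gives $\|u^{*}(t,\varepsilon) - \widehat{u}(t,\varepsilon)\| \le c_{0}\|B_{\rm u,1}(t)\|\varepsilon^{2} + c_{0}\|B_{\rm u,2}(t)\|\varepsilon^{3}$; replacing $\|B_{\rm u,1}(t)\|$ and $\|B_{\rm u,2}(t)\|$ by their maxima over the compact interval $[0,t_{f}]$ (finite, by the continuity of $B_{\rm u}(t)$ asserted in Proposition \ref{init-problem-transf}) yields the first claimed inequality with $c_{1}$ and $c_{2}$ exactly as stated.

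For the follower's control the same norm estimate gives $\|v^{*}(t,\varepsilon) - \widehat{v}(t,\varepsilon)\| = \varepsilon^{-1}\|\lambda_{\rm v2}(t,\varepsilon) - \lambda_{\rm v2}^{1}(t,\varepsilon)\| \le \varepsilon^{-1}c_{0}\varepsilon^{2} = c_{0}\varepsilon$, which is the second inequality. I do not foresee a genuine obstacle once the lemma is available; the only point worth flagging is that the factor $\varepsilon^{-1}$ in $v^{*}$ costs one power of $\varepsilon$, so the follower's approximation is necessarily one order less accurate than the leader's. The estimate survives precisely because Lemma \ref{justific-asympt-solution} delivers second-order (rather than merely first-order) accuracy for $\lambda_{\rm v2}$: had only first-order accuracy been available, division by $\varepsilon$ would leave an $O(1)$ error and the bound would fail. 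Since all the component bounds in (\ref{ineq-zero-order-solution}) are uniform in $t$, the two final inequalities hold for every $t \in [0,t_{f}]$ and every $\varepsilon \in (0,\varepsilon_{0}]$.
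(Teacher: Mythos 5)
Your proposal is correct and is exactly the argument the paper intends: its proof of this theorem consists of the single sentence that the statement "directly follows" from the control expressions (\ref{u*-v*-new-form}), (\ref{hat-u-v}) and Lemma \ref{justific-asympt-solution}, and your computation — subtracting the two pairs of controls, applying the triangle inequality and submultiplicativity, inserting the uniform $c_{0}\varepsilon^{2}$ bounds, and maximizing $\|B_{\rm u,1}(t)\|$, $\|B_{\rm u,2}(t)\|$ over $[0,t_{f}]$ — is precisely the omitted routine verification, reproducing the constants $c_{1}$, $c_{2}$ exactly. Your remark that the $\varepsilon^{-1}$ factor in the follower's control consumes one order of accuracy, so that the second-order bound of the lemma is essential, is a correct and worthwhile observation.
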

\begin{proof} The statement of the theorem directly follows from the equations (\ref{u*-v*-new-form}),(\ref{hat-u-v}) and Lemma
\ref{justific-asympt-solution}.
\end{proof}

Let $\widehat{J}_{\rm u}(\varepsilon)$ and $\widehat{J}_{\rm v}(\varepsilon)$ be the values of the functionals  $J_{\rm u}(u,v)$ and $J_{\rm v}(u,v)$, respectively, generated by the pair of the controls $\big(\widehat{u}(t,\varepsilon),\widehat{v}(t,\varepsilon)\big)$ in the game (\ref{new-eq})-(\ref{new-perf-ind-2}).

\begin{theorem}\label{suboptimal-solution-hat} Let the assumptions (A1)-(A7) be valid. Then, for all $\varepsilon \in (0,\varepsilon_{0}]$, the following inequalities are satisfied:
\begin{eqnarray}\label{ineq-hat-for-J_u*-J_v*} \big|J_{\rm u}^{*}(\varepsilon) - \widehat{J}_{\rm u}(\varepsilon)\big| \le \widehat{b}\varepsilon,\nonumber\\
\big|J_{\rm v}^{*}(\varepsilon) - \widehat{J}_{\rm v}(\varepsilon)\big| \le \widehat{b}\varepsilon,\nonumber\\
\end{eqnarray}
where $\widehat{b} > 0$ is some constant independent of $\varepsilon$.
\end{theorem}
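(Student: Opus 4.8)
The plan is to reduce the cost estimate to the control and trajectory approximations already in hand. Let $z^{*}(t,\varepsilon) = z(t,\varepsilon)$ be the optimal trajectory generated by $\big(u^{*},v^{*}\big)$, and let $\widehat{z}(t,\varepsilon)$ be the trajectory generated by $\big(\widehat{u},\widehat{v}\big)$ through the dynamics (\ref{new-eq}) with the same initial vector $z_{0}$. Writing $\delta z = z^{*} - \widehat{z}$, $\delta u = u^{*} - \widehat{u}$, $\delta v = v^{*} - \widehat{v}$, the difference $\delta z$ obeys the linear problem
$$\frac{d\,\delta z(t)}{dt} = A(t)\delta z(t) + B_{\rm u}(t)\delta u(t) + B_{\rm v}(t)\delta v(t),\qquad \delta z(0) = 0.$$
By Theorem \ref{asymp-u*-v*}, $\|\delta u\| \le c_{1}\varepsilon^{2}+c_{2}\varepsilon^{3}$ and $\|\delta v\| \le c_{0}\varepsilon$ uniformly in $t$, so the forcing is $O(\varepsilon)$ uniformly. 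Applying the variation-of-constants formula with the bounded state-transition matrix of $A(t)$ on the compact interval $[0,t_{f}]$ (equivalently, Gronwall's inequality) yields the uniform bound $\|\delta z(t,\varepsilon)\| \le C\varepsilon$ with an $\varepsilon$-independent $C$.

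Next I would record the boundedness facts needed below. The optimal trajectory $z^{*}$ and the optimal leader control $u^{*}$ are $O(1)$ uniformly in $t$: this follows from Lemma \ref{justific-asympt-solution} together with the explicit outer terms and the exponentially decaying boundary-layer terms built in Section \ref{sec4}, all bounded on $[0,t_{f}]$. Hence $\widehat{z} = z^{*}-\delta z$ and $\widehat{u} = u^{*}-\delta u$ are $O(1)$ uniformly as well. The follower controls $v^{*}$ and $\widehat{v}$ are only $O(1/\varepsilon)$, being of the form $-\varepsilon^{-1}\lambda_{\rm v2}$ with $\lambda_{\rm v2}$ carrying the $O(1)$ boundary layer $\lambda_{\rm v2,0}^{0}(t/\varepsilon)$; however, the exponential-decay estimate (\ref{ineq-for-bound-correc-3}) gives $\int_{0}^{t_{f}}\|\widehat{v}(t,\varepsilon)\|\,dt = O(1)$, which is the quantity that actually controls the follower contribution.

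Then I would expand each cost difference about the suboptimal controls. For the leader,
$$J_{\rm u}^{*}(\varepsilon) - \widehat{J}_{\rm u}(\varepsilon) = \frac{1}{2}\int_{0}^{t_{f}}\Big[2\widehat{z}^{T}D_{\rm u}\delta z + \delta z^{T}D_{\rm u}\delta z + 2\widehat{u}^{T}\delta u + \delta u^{T}\delta u + \varepsilon^{2}\big(2\widehat{v}^{T}G_{\rm u,v}\delta v + \delta v^{T}G_{\rm u,v}\delta v\big)\Big]dt,$$
and analogously for $J_{\rm v}$ with $D_{\rm v}$, $G_{\rm v,u}$ and the weight $\varepsilon^{2}$ on the pure follower term. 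The state group is $O(\varepsilon)$, the dominant contribution, coming from $\int\widehat{z}^{T}D_{\rm u}\delta z\,dt$ with $\widehat{z}=O(1)$ and $\delta z = O(\varepsilon)$; the quadratic state term is $O(\varepsilon^{2})$. The leader group is $O(\varepsilon^{2})$ since $\delta u = O(\varepsilon^{2})$ and $\widehat{u}=O(1)$. The follower group is bounded by $2\varepsilon^{3}c_{0}\big(\max_{t}\|G_{\rm u,v}(t)\|\big)\int_{0}^{t_{f}}\|\widehat{v}\|\,dt + \varepsilon^{2}O(\varepsilon^{2}) = O(\varepsilon^{3})$, using $\int\|\widehat{v}\|\,dt = O(1)$ and $\|\delta v\|\le c_{0}\varepsilon$. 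Summing, $|J_{\rm u}^{*}-\widehat{J}_{\rm u}| = O(\varepsilon)$, and the identical argument gives $|J_{\rm v}^{*}-\widehat{J}_{\rm v}| = O(\varepsilon)$; taking $\widehat{b}$ as the larger of the two implied constants establishes (\ref{ineq-hat-for-J_u*-J_v*}).

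The point requiring the most care is the follower term, whose singular $1/\varepsilon$ scaling a priori threatens an $O(1)$ cost discrepancy. The obstacle dissolves because that term carries the weight $\varepsilon^{2}$ while the approximation error $\delta v$ is only $O(\varepsilon)$ and $\widehat{v}$ is concentrated in a thin boundary layer of $O(1)$ total mass; the genuine leading error is therefore not the follower cost but the state cross-term $\int\widehat{z}^{T}D\,\delta z\,dt$, which inherits the $O(\varepsilon)$ size of $\delta z$ and thus of $\delta v$. Everything else reduces to routine quadratic bookkeeping atop Theorem \ref{asymp-u*-v*} and Lemma \ref{justific-asympt-solution}.
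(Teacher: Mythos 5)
Your proposal is correct and takes essentially the same route as the paper's proof: both compare the trajectory $z^{*}$ generated by $(u^{*},v^{*})$ with the trajectory $\widehat{z}$ generated by $(\widehat{u},\widehat{v})$, bound $\widehat{\delta}_{z}=z^{*}-\widehat{z}$ by $O(\varepsilon)$ via the variation-of-constants formula together with the control estimates of Theorem \ref{asymp-u*-v*}, and then estimate the cost differences term by term. The paper compresses the final quadratic bookkeeping (including the $\varepsilon^{2}$-weighted follower term against the $1/\varepsilon$-scaled, boundary-layer-concentrated $\widehat{v}$) into a single concluding sentence; your write-up simply makes those details explicit.
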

\begin{proof}Let us start with the proof of the first inequality in (\ref{ineq-hat-for-J_u*-J_v*}).

Substituting the open-loop Stackelberg solution $\big(u^{*}(t,\varepsilon),v^{*}(t,\varepsilon)\big)$ into the system (\ref{new-eq}) instead of $\big(u(t),v(t)\big)$, we obtain for all $\varepsilon \in (0,\varepsilon_{0}]$
\begin{eqnarray}\label{syst-for-u^*-v^*}
\frac{dz(t)}{dt}=A(t)z(t)+B_{\rm u}(t)u^{*}(t,\varepsilon)+B_{\rm v}(t)v^{*}(t,\varepsilon),\  \  \ t\in [0,t_{f}],\  \  \ z(0)=z_{0}.
\end{eqnarray}
The solution $z^{*}(t,\varepsilon)$ of this system is the Stackelberg optimal trajectory of the game (\ref{new-eq})-(\ref{new-perf-ind-2}). Due to Proposition \ref{Stackelberg-solution} and the equivalence of the boundary-value problems (\ref{initial-bound-value-pr}) and (\ref{equiv-system})-(\ref{equiv-cond}), we directly have
\begin{eqnarray}\label{z^*=z}z^{*}(t,\varepsilon) = z(t,\varepsilon),\  \  \  \ t \in [0,t_{f}],\  \  \  \ \varepsilon \in (0,\varepsilon_{0}],\end{eqnarray}
where $z(t,\varepsilon) = {\rm col}\big(z_{1}(t,\varepsilon), z_{2}(t,\varepsilon)\big)$, and $z_{1}(t,\varepsilon)$ and $z_{2}(t,\varepsilon)$ are the corresponding components of the solution to the boundary-value problem (\ref{equiv-system})-(\ref{equiv-cond}).

Thus,
\begin{eqnarray}\label{J_u^*-hat}J_{\rm u}^{*}(\varepsilon) = J_{\rm u}\big(u^{*}(t,\varepsilon),v^{*}(t,\varepsilon)\big) = \frac{1}{2}\int_{0}^{t_{f}}\big[\big(z^{*}(t,\varepsilon)\big)^{T}D_{\rm u}(t)z^{*}(t,\varepsilon)\nonumber\\
+ \big(u^{*}(t,\varepsilon)\big)^{T}u^{*}(t,\varepsilon) +  \varepsilon^{2}\big(v^{*}(t,\varepsilon)\big)^{T}G_{\rm u,v}(t)v^{*}(t,\varepsilon)\big]dt,\  \  \
 \  \varepsilon \in (0,\varepsilon_{0}].\nonumber\\
\end{eqnarray}
Now, let us substitute the pair of the controls $\big(\widehat{u}(t,\varepsilon),\widehat{v}(t,\varepsilon)\big)$ into the system (\ref{new-eq}) instead of $\big(u(t),v(t)\big)$. This substitution yields for all $\varepsilon \in (0,\varepsilon_{0}]$
\begin{eqnarray}\label{syst-for-hat-u-v}
\frac{dz(t)}{dt}=A(t)z(t)+B_{\rm u}(t)\widehat{u}(t,\varepsilon)+B_{\rm v}(t)\widehat{v}(t,\varepsilon),\  \  \ t\in [0,t_{f}],\  \  \ z(0)=z_{0}.
\end{eqnarray}
Let $z = \widehat{z}(t,\varepsilon)$, $t \in [0,t_{f}]$ denote the solution of this system.

Substituting  $\widehat{z}(t,\varepsilon)$, $\widehat{u}(t,\varepsilon)$ and $\widehat{v}(t,\varepsilon)$ into the functional (\ref{new-perf-ind-1}) instead of $z(t)$, $u(t)$ and $v(t)$, respectively, yields the following expression for $\widehat{J}_{\rm u}(\varepsilon)$:
\begin{eqnarray}\label{widehat-J_u}\widehat{J}_{\rm u}(\varepsilon) = J_{\rm u}\big(\widehat{u}(t,\varepsilon),\widehat{v}(t,\varepsilon)\big) = \frac{1}{2}\int_{0}^{t_{f}}\big[\widehat{z}^{T}(t,\varepsilon)D_{\rm u}(t)\widehat{z}(t,\varepsilon)\nonumber\\
+ \widehat{u}^{T}(t,\varepsilon)\widehat{u}(t,\varepsilon) +  \varepsilon^{2}\widehat {v}^{T}(t,\varepsilon)G_{\rm u,v}(t)\widehat{v}(t,\varepsilon)\big]dt,\  \  \
 \  \varepsilon \in (0,\varepsilon_{0}].\nonumber\\
\end{eqnarray}

Let us denote
\begin{eqnarray}\label{hat-delta}\widehat{\delta}_{z}(t,\varepsilon) \stackrel{\triangle}{=} z^{*}(t,\varepsilon) - \widehat{z}(t,\varepsilon),\  \  \  \ t \in [0,t_{f}],\  \  \  \ \varepsilon \in (0,\varepsilon_{0}].\end{eqnarray}

Using the equations (\ref{syst-for-u^*-v^*}) and (\ref{syst-for-hat-u-v}), we directly obtain that $\widehat{\delta}_{z}(t,\varepsilon)$ is the unique solution of the system
\begin{eqnarray}\label{system-hat-delta_z}\frac{d\widehat{\delta}_{z}(t,\varepsilon)}{dt} = A(t)\widehat{\delta}_{z}(t,\varepsilon) + B_{\rm u}(t)\big(u^{*}(t,\varepsilon) - \widehat{u}(t,\varepsilon)\big) + B_{\rm v}(t)\big(v^{*}(t,\varepsilon) - \widehat{v}(t,\varepsilon)\big),\nonumber\\
t \in [0,t_{f}],\  \  \  \  \  \ \widehat{\delta}_{z}(0,\varepsilon) = 0.\nonumber\\
\end{eqnarray}

Let $\Gamma(t,\sigma)$, $0 \le \sigma \le t \le t_{f}$ be the fundamental matrix solution of the homogeneous system corresponding to the differential equation in (\ref{system-hat-delta_z}). Hence, we can express $\widehat{\delta}_{z}(t,\varepsilon)$ as follows:
\begin{eqnarray}\label{hat-delta_z} \widehat{\delta}_{z}(t,\varepsilon) = \int_{0}^{t}\Gamma(t,\sigma)\big[B_{\rm u}(\sigma)\big(u^{*}(\sigma,\varepsilon) - \widehat{u}(\sigma,\varepsilon)\big) + B_{\rm v}(\sigma)\big(v^{*}(\sigma,\varepsilon) - \widehat{v}(\sigma,\varepsilon)\big)\big]d\sigma,\nonumber\\
t \in [0,t_{f}],\  \  \  \  \ \varepsilon \in (0,\varepsilon_{0}].\nonumber
\end{eqnarray}
This equation, along with the inequalities in (\ref{ineq-u*-v*}), yields immediately
\begin{eqnarray}\label{ineq-hat-delta_z} \big\|\widehat{\delta}_{z}(t,\varepsilon)\big\| \le c_{3}\varepsilon,\  \  \  \ t \in [0,t_{f}],\  \  \  \ \varepsilon \in (0,\varepsilon_{0}],\end{eqnarray}
where $c_{3} > 0$ is some constant independent of $\varepsilon$.

Now, the equations (\ref{J_u^*-hat}),(\ref{widehat-J_u}),(\ref{hat-delta}) and the inequalities (\ref{ineq-u*-v*}),(\ref{ineq-hat-delta_z}) yield the first inequality in (\ref{ineq-hat-for-J_u*-J_v*}). The second inequality in (\ref{ineq-hat-for-J_u*-J_v*}) is proven similarly. This completes the proof of the theorem.
\end{proof}

\begin{remark}\label{suboptim-solution-hat} Due to Theorem \ref{suboptimal-solution-hat}, the
pair of the controls $\big(\widehat{u}(t),\widehat{v}(t,\varepsilon)\big)$ is an asymptotically suboptimal Stackelberg solution in the game (\ref{new-eq})-(\ref{new-perf-ind-2}). To obtain this solution, one has to obtain all the addends appearing in the expressions for $\lambda_{\rm u 1}^{1}(t,\varepsilon)$, $\lambda_{\rm u 2}^{1}(t,\varepsilon)$, $\lambda_{\rm v 2}^{1}(t,\varepsilon)$ (see the equation (\ref{zero-order-asympt-solution-new-form})).
\end{remark}

Consider the values
\begin{eqnarray}\label{hat-J} \bar{J}_{\rm u,0} = \frac{1}{2}\int_{0}^{t_{f}}\big[\bar{z}_{0}^{T}(t)D_{\rm u}(t)\bar{z}_{0}(t) + \bar{\lambda}_{\rm u1,0}^{T}(t)S_{\rm u,1}(t)\bar{\lambda}_{\rm u1,0}(t)\big]dt,\nonumber\\
\bar{J}_{\rm v,0} = \frac{1}{2}\int_{0}^{t_{f}}\big[\bar{z}_{0}^{T}(t)D_{\rm v}(t)\bar{z}_{0}(t) + \bar{\lambda}_{\rm u1,0}^{T}(t)B_{\rm u,1}(t)G_{\rm v,u}(t)B_{\rm u,1}^{T}(t)\bar{\lambda}_{\rm u1,0}(t)]dt,\nonumber\\
\end{eqnarray}
where $\bar{z}_{0}(t) = \left(\begin{array}{c}\bar{z}_{1,0}(t)\\ \bar{z}_{2,0}(t)\end{array}\right)$,
$\bar{z}_{1,0}(t)$ and $\bar{\lambda}_{\rm u1,0}(t)$ are the corresponding components of the solution to the boundary-value problem (\ref{set-four-diff-eq-outer-sol}),(\ref{outer-solution-cond}); $\bar{z}_{2,0}(t)$ is given by the equation (\ref{bar-z_2}).

It is seen that $\bar{J}_{\rm u,0}$ and $\bar{J}_{\rm v,0}$ are independent of $\varepsilon$.

\begin{corollary}\label{asympt-J_u-J_v} Let the assumptions (A1)-(A7) be valid. Then, for all $\varepsilon \in (0,\varepsilon_{0}]$, the Stackelberg optimal values of the cost functionals $J_{\rm u}^{*}(\varepsilon)$ and $J_{\rm v}^{*}(\varepsilon)$ in the game (\ref{new-eq})-(\ref{new-perf-ind-2}) satisfy the inequalities
\begin{eqnarray}\label{ineq-for-J_u*-J_v*} \big|J_{\rm u}^{*}(\varepsilon) - \bar{J}_{\rm u,0}\big| \le \bar{b}\varepsilon,\nonumber\\
\big|J_{\rm v}^{*}(\varepsilon) - \bar{J}_{\rm v,0}\big| \le \bar{b}\varepsilon,\nonumber\\
\end{eqnarray}
where $\bar{b} > 0$ is some constant independent of $\varepsilon$.
\end{corollary}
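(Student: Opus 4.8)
The plan is to establish both inequalities by the same argument, carrying out the details for the leader's functional $J_{\rm u}^{*}(\varepsilon)$ and then indicating the analogous changes for $J_{\rm v}^{*}(\varepsilon)$. I would begin from the closed form (\ref{J_u^*-hat}) of $J_{\rm u}^{*}(\varepsilon)$ in terms of the Stackelberg optimal trajectory $z^{*}(t,\varepsilon) = z(t,\varepsilon)$ and the optimal controls. Using (\ref{u*-v*-new-form}), the follower's control enters through $\varepsilon^{2}(v^{*})^{T}G_{\rm u,v}v^{*} = \lambda_{\rm v2}^{T}G_{\rm u,v}\lambda_{\rm v2}$, while $u^{*} = -B_{\rm u,1}^{T}\lambda_{\rm u1} - \varepsilon B_{\rm u,2}^{T}\lambda_{\rm u2}$. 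By Lemma \ref{justific-asympt-solution}, each exact component $z,\lambda_{\rm u1},\lambda_{\rm u2},\lambda_{\rm v2}$ may be replaced by its first-order asymptotic approximation $z^{1},\lambda_{\rm u1}^{1},\lambda_{\rm u2}^{1},\lambda_{\rm v2}^{1}$ at the cost of an $O(\varepsilon^{2})$ error; since these approximations are uniformly bounded on $[0,t_{f}]$, the induced error in the (quadratic) integrand is also $O(\varepsilon^{2})$, hence $O(\varepsilon)$ after integration.

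Next I would insert the explicit expansions (\ref{zero-order-asympt-solution-new-form}) in order to separate the $\varepsilon$-free part of the integrand from the rest. From (\ref{zero-order-asympt-solution-new-form}) one reads off $z_{1}^{1}=\bar{z}_{1,0}+O(\varepsilon)$ and $\lambda_{\rm u1}^{1}=\bar{\lambda}_{\rm u1,0}+O(\varepsilon)$ (their zero-order boundary corrections vanish), whereas $z_{2}^{1}=\bar{z}_{2,0}+z_{2,0}^{0}(t/\varepsilon)+O(\varepsilon)$, $\lambda_{\rm u2}^{1}=\lambda_{\rm u2,0}^{0}(t/\varepsilon)+O(\varepsilon)$ and $\lambda_{\rm v2}^{1}=\lambda_{\rm v2,0}^{0}(t/\varepsilon)+O(\varepsilon)$ carry a leading boundary-layer term. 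Substituting, and using that the upper-left block of $S_{\rm u}=B_{\rm u}B_{\rm u}^{T}$ equals $B_{\rm u,1}B_{\rm u,1}^{T}=S_{\rm u,1}$, the purely $\varepsilon$-independent part of the integrand is exactly $\bar{z}_{0}^{T}D_{\rm u}\bar{z}_{0}+\bar{\lambda}_{\rm u1,0}^{T}S_{\rm u,1}\bar{\lambda}_{\rm u1,0}$, whose half-integral over $[0,t_{f}]$ is $\bar{J}_{\rm u,0}$ (cf. (\ref{hat-J}) and Remark \ref{new-form-bar-J_u^*}). The factor $\varepsilon B_{\rm u,2}^{T}\lambda_{\rm u2}^{1}$ in $u^{*}$ is manifestly $O(\varepsilon)$, and $\lambda_{\rm v2}^{T}G_{\rm u,v}\lambda_{\rm v2}$ consists only of boundary-layer terms, contributing nothing at order $\varepsilon^{0}$.

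The main (though routine) obstacle is the bookkeeping that shows every remaining term is $O(\varepsilon)$ after integration. Here I would rely on the elementary estimate that, for a function $g$ with $\|g(\xi)\|\le a\,e^{-\beta\xi}$, $\beta>0$, one has $\int_{0}^{t_{f}}\|g(t/\varepsilon)\|\,dt = \varepsilon\int_{0}^{t_{f}/\varepsilon}\|g(\xi)\|\,d\xi \le (a/\beta)\varepsilon$, and likewise $\int_{0}^{t_{f}}\|g_{1}(t/\varepsilon)\|\,\|g_{2}(t/\varepsilon)\|\,dt=O(\varepsilon)$ and $\int_{0}^{t_{f}}|h(t)|\,\|g(t/\varepsilon)\|\,dt=O(\varepsilon)$ for bounded $h$. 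The required exponential-decay bounds are precisely those already proved in (\ref{ineq-for-bound-correc-3}) and (\ref{ineq-bound-terms-1-1}) (together with their first-order analogues). Consequently every cross term between the outer solution and a boundary layer, every boundary-layer-squared term, and every term bearing an explicit factor $\varepsilon$ is $O(\varepsilon)$. Collecting, $J_{\rm u}^{*}(\varepsilon)=\bar{J}_{\rm u,0}+O(\varepsilon)$, which yields the first inequality with a suitable constant.

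Finally, I would repeat the computation for $J_{\rm v}^{*}(\varepsilon)$ starting from (\ref{new-perf-ind-2}), where $\varepsilon^{2}(v^{*})^{T}v^{*}=\lambda_{\rm v2}^{T}\lambda_{\rm v2}$ is a pure boundary-layer term (hence $O(\varepsilon)$ after integration), the state term splits along the block structure (\ref{new-matr-D_v}) of $D_{\rm v}$, and $(u^{*})^{T}G_{\rm v,u}u^{*}=\bar{\lambda}_{\rm u1,0}^{T}B_{\rm u,1}G_{\rm v,u}B_{\rm u,1}^{T}\bar{\lambda}_{\rm u1,0}+O(\varepsilon)$. The surviving $\varepsilon$-free part is $\bar{z}_{0}^{T}D_{\rm v}\bar{z}_{0}+\bar{\lambda}_{\rm u1,0}^{T}B_{\rm u,1}G_{\rm v,u}B_{\rm u,1}^{T}\bar{\lambda}_{\rm u1,0}$, whose half-integral is $\bar{J}_{\rm v,0}$ from (\ref{hat-J}), so that $J_{\rm v}^{*}(\varepsilon)=\bar{J}_{\rm v,0}+O(\varepsilon)$. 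Taking $\bar{b}$ to be the larger of the two constants produced then yields both inequalities in (\ref{ineq-for-J_u*-J_v*}). As a shortcut, one could alternatively combine Theorem \ref{suboptimal-solution-hat} with the same boundary-layer estimates applied to $\widehat{J}_{\rm u}(\varepsilon)$ and $\widehat{J}_{\rm v}(\varepsilon)$ through the triangle inequality.
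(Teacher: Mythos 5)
Your proposal is correct and follows essentially the same route as the paper: the paper likewise rewrites $J_{\rm u}^{*}(\varepsilon)$ via (\ref{u*-v*-new-form}) into the quadratic form (\ref{expression-J_u*}) in the components of the boundary-value problem's solution, then invokes Lemma \ref{justific-asympt-solution}, the exponential-decay estimates (\ref{ineq-for-bound-correc-3}), (\ref{ineq-bound-terms-1-1}), (\ref{ineq-for-bound-correc-3-1}), (\ref{ineq-for-bound-correc-4-1}), and the definition (\ref{hat-J}) to conclude both inequalities, treating $J_{\rm v}^{*}(\varepsilon)$ analogously. The integration bookkeeping you spell out (boundary layers contributing $O(\varepsilon)$ after integration, the outer part matching $\bar{J}_{\rm u,0}$ and $\bar{J}_{\rm v,0}$) is exactly what the paper compresses into ``yields immediately.''
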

\begin{proof} Let us start with the proof of the first inequality in (\ref{ineq-for-J_u*-J_v*}).

Substituting the expressions for the $u^{*}(t,\varepsilon)$, $v^{*}(t,\varepsilon)$ (see the equation (\ref{u*-v*-new-form})) into the equation (\ref{J_u^*-hat}) and using the expression for the matrix $S_{\rm u}(t)$, the block form of this matrix and the block form of the matrix $B_{\rm u}(t)$ (see the equations (\ref{matr-S_u-S_v-Suv}),(\ref{matr-A-D_u-S_u-blocks}) and (\ref{B_u-blocks})), we obtain after a routine rearrangement the following expression for $J_{\rm u}^{*}(\varepsilon)$:
\begin{eqnarray}\label{expression-J_u*} J_{\rm u}^{*}(\varepsilon)
= \frac{1}{2}\int_{0}^{t_{f}}\big[\big(z^{*}(t,\varepsilon)\big)^{T}D_{\rm u}(t)z^{*}(t,\varepsilon) + \lambda_{\rm u 1}^{T}(t,\varepsilon)S_{\rm u,1}(t)\lambda_{\rm u 1}(t,\varepsilon)\nonumber\\ + 2\varepsilon\lambda_{\rm u1}^{T}(t,\varepsilon)S_{\rm u,2}(t)\lambda_{\rm u2}(t,\varepsilon)
+ \varepsilon^{2}\lambda_{\rm u2}^{T}(t)S_{\rm u,2}(t)\lambda_{\rm u2}(t)\nonumber\\ + \lambda_{\rm v2}^{T}(t)G_{\rm u,v}(t)\lambda_{\rm v2}(t)\big]dt,\
 \  \ \varepsilon \in (0,\varepsilon_{0}],\nonumber\\
 \end{eqnarray}
where, due to (\ref{z^*=z}), the vector $z^{*}(t,\varepsilon) = {\rm col}\big(z_{1}(t,\varepsilon),z_{2}(t,\varepsilon)\big)$,  and its entries $z_{1}(t,\varepsilon)$ and $z_{2}(t,\varepsilon)$, as well as the vectors $\lambda_{\rm u 1}(t,\varepsilon)$, $\lambda_{\rm u 2}(t,\varepsilon)$, $\lambda_{\rm v2}(t,\varepsilon)$, are the corresponding components of the solution to boundary-value problem (\ref{equiv-system})-(\ref{equiv-cond}).

Now, using the equation (\ref{expression-J_u*}), as well as Lemma \ref{justific-asympt-solution}, the inequalities in (\ref{ineq-for-bound-correc-3}), (\ref{ineq-bound-terms-1-1}), (\ref{ineq-for-bound-correc-3-1}), (\ref{ineq-for-bound-correc-4-1}) and the first equation in (\ref{hat-J}), yields immediately the first inequality in (\ref{ineq-for-J_u*-J_v*}).

The second inequality in (\ref{ineq-for-J_u*-J_v*}) is proven quite similarly to the proof of the first inequality. This completes the proof of the theorem.
\end{proof}

\begin{remark}\label{hat-J_u=bar-J_u} Using Proposition \ref{solution-auxiliary-ocp}, Remark \ref{new-form-bar-J_u^*} and the first equation in (\ref{hat-J}), we can conclude that $\bar{J}_{\rm u,0} = \bar{J}_{\rm u}^{*}$.
\end{remark}

\subsection{Simplified asymptotically suboptimal Stackelberg solution to the game (\ref{new-eq})-(\ref{new-perf-ind-2})}

Consider the following controls for the leader and the follower, respectively:
\begin{eqnarray}\label{suboptim-controls}\widetilde{u}(t) = - B_{\rm u,1}^{T}(t)\bar{\lambda}_{\rm u1,0}(t),\  \  \  \ t \in [0,t_{f}],\nonumber\\
{\rm and}\nonumber\\
\widetilde{v}(t,\varepsilon) = - \frac{1}{\varepsilon}\lambda_{\rm v2,0}^{0}(t/\varepsilon) - \bar{\lambda}_{\rm v2,1}(t),\  \  \  \ t \in [0,t_{f}],\  \  \  \ \varepsilon \in (0,\varepsilon_{0}].\nonumber\\
\end{eqnarray}

\begin{remark}\label{comparison-tilde-hat} Comparison of the equations (\ref{hat-u-v}) and (\ref{suboptim-controls}) with each other directly shows that the pair of the controls $\big(\widetilde{u}(t),\widetilde{v}(t,\varepsilon)\big)$ is much simpler than the pair of the controls $\big(\widehat{u}(t,\varepsilon),\widehat{v}(t,\varepsilon)\big)$. Namely, in contrast with $\widehat{u}(t,\varepsilon)$, the control $\widetilde{u}(t)$ is independent of $\varepsilon$. Moreover, the calculation of $\widetilde{u}(t)$ requires only the obtaining $\bar{\lambda}_{\rm u 1,0}(t)$, which is much simpler than the calculation of $\widehat{u}(t,\varepsilon)$. The calculation of $\widetilde{v}(t,\varepsilon)$ requires only the obtaining $\lambda_{\rm
 v 2,0}^{0}(t/\varepsilon)$ and $\bar{\lambda}_{\rm v 2,1}(t)$, which is much simpler than the calculation of $\widehat{v}(t,\varepsilon)$.
\end{remark}

Let $\widetilde{J}_{\rm u}(\varepsilon)$ and $\widetilde{J}_{\rm v}(\varepsilon)$ be the values of the functionals  $J_{\rm u}(u,v)$ and $J_{\rm v}(u,v)$, respectively, generated by the pair of the controls $\big(\widetilde{u}(t),\widetilde{v}(t,\varepsilon)\big)$ in the game (\ref{new-eq})-(\ref{new-perf-ind-2}).

\begin{theorem}\label{suboptimal-solution} Let the assumptions (A1)-(A7) be valid. Then, for all $\varepsilon \in (0,\varepsilon_{0}]$, the following inequalities are satisfied:
\begin{eqnarray}\label{ineq-2-for-J_u*-J_v*} \big|J_{\rm u}^{*}(\varepsilon) - \widetilde{J}_{\rm u}(\varepsilon)\big| \le \widetilde{b}\varepsilon,\nonumber\\
\big|J_{\rm v}^{*}(\varepsilon) - \widetilde{J}_{\rm v}(\varepsilon)\big| \le \widetilde{b}\varepsilon,\nonumber\\
\end{eqnarray}
where $\widetilde{b} > 0$ is some constant independent of $\varepsilon$.
\end{theorem}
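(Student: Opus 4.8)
The plan is to follow closely the structure of the proof of Theorem~\ref{suboptimal-solution-hat}, replacing the control pair $\big(\widehat{u}(t,\varepsilon),\widehat{v}(t,\varepsilon)\big)$ with $\big(\widetilde{u}(t),\widetilde{v}(t,\varepsilon)\big)$, and tracking carefully the fact that the simplified controls in (\ref{suboptim-controls}) discard the first-order boundary-layer corrections of $\lambda_{\rm v2}$. Alternatively, one could bound $|\widetilde{J}_{\rm u}-\widehat{J}_{\rm u}|$ and $|\widetilde{J}_{\rm v}-\widehat{J}_{\rm v}|$ and invoke Theorem~\ref{suboptimal-solution-hat} via the triangle inequality, but I prefer the direct route since it parallels the earlier argument.

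First I would estimate the two control discrepancies. Combining the representation (\ref{zero-order-asympt-solution-new-form}), Lemma~\ref{justific-asympt-solution}, and the definitions (\ref{u*-v*-new-form}),(\ref{suboptim-controls}), one gets the uniform bound $\|u^{*}(t,\varepsilon)-\widetilde{u}(t)\|\le C\varepsilon$, because $\lambda_{\rm u1}(t,\varepsilon)=\bar{\lambda}_{\rm u1,0}(t)+O(\varepsilon)$ (by Lemma~\ref{justific-asympt-solution} and the boundedness of the first-order outer and boundary terms) and $\varepsilon B_{\rm u,2}^{T}(t)\lambda_{\rm u2}(t,\varepsilon)=O(\varepsilon)$ since $\lambda_{\rm u2}(t,\varepsilon)$ is bounded. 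For the follower, a direct computation using (\ref{zero-order-asympt-solution-new-form}) yields
\[
v^{*}(t,\varepsilon)-\widetilde{v}(t,\varepsilon)=-\big[\lambda_{\rm v2,1}^{0}(t/\varepsilon)+\lambda_{\rm v2,1}^{f}((t-t_{f})/\varepsilon)\big]-\tfrac{1}{\varepsilon}\big[\lambda_{\rm v2}(t,\varepsilon)-\lambda_{\rm v2}^{1}(t,\varepsilon)\big],
\]
so $v^{*}-\widetilde{v}$ is only $O(1)$ pointwise, but it is the sum of exponentially decaying boundary-layer functions (bounded through (\ref{ineq-for-bound-correc-3-1}),(\ref{ineq-for-bound-correc-4-1})) plus a uniformly $O(\varepsilon)$ remainder coming from Lemma~\ref{justific-asympt-solution}.

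Next, letting $\widetilde{z}(t,\varepsilon)$ be the trajectory of (\ref{new-eq}) generated by $\big(\widetilde{u}(t),\widetilde{v}(t,\varepsilon)\big)$ and setting $\widetilde{\delta}_{z}=z^{*}-\widetilde{z}$, I would represent $\widetilde{\delta}_{z}$ by the variation-of-constants formula with the fundamental matrix $\Gamma(t,\sigma)$, exactly as in the proof of Theorem~\ref{suboptimal-solution-hat}. The contribution of $B_{\rm u}(\sigma)\big(u^{*}-\widetilde{u}\big)$ is $O(\varepsilon)$ by the uniform bound above. The contribution of $B_{\rm v}(\sigma)\big(v^{*}-\widetilde{v}\big)$ is also $O(\varepsilon)$: although $v^{*}-\widetilde{v}$ is $O(1)$, its boundary-layer constituents decay on the scale $\varepsilon$, so that $\int_{0}^{t_{f}}\exp(-0.5\beta^{0}\sigma/\varepsilon)d\sigma$ and $\int_{0}^{t_{f}}\exp(\beta^{f}(\sigma-t_{f})/\varepsilon)d\sigma$ are both $O(\varepsilon)$. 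Hence $\|\widetilde{\delta}_{z}(t,\varepsilon)\|\le C\varepsilon$ uniformly on $[0,t_{f}]$.

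Finally, substituting into (\ref{new-perf-ind-1}),(\ref{new-perf-ind-2}), the state term and the $u$-control term each differ by $O(\varepsilon)$ via the factorizations $(z^{*})^{T}D_{\rm u}z^{*}-\widetilde{z}^{T}D_{\rm u}\widetilde{z}=\widetilde{\delta}_{z}^{T}D_{\rm u}(z^{*}+\widetilde{z})$ and $(u^{*})^{T}u^{*}-\widetilde{u}^{T}\widetilde{u}=(u^{*}-\widetilde{u})^{T}(u^{*}+\widetilde{u})$, together with the uniform boundedness of $z^{*},\widetilde{z},u^{*},\widetilde{u}$. I expect the main obstacle to be the quadratic follower-cost term $\varepsilon^{2}v^{T}G_{\rm u,v}v$ (and the analogous $\varepsilon^{2}v^{T}v$ in $J_{\rm v}$): there $v^{*}$ and $\widetilde{v}$ are individually $O(1/\varepsilon)$ in the initial layer, so a naive estimate gives $\varepsilon^{2}\cdot O(1/\varepsilon^{2})=O(1)$, which is too crude. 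The resolution is that $v^{*}$ and $\widetilde{v}$ share the same singular leading part $-\tfrac{1}{\varepsilon}\lambda_{\rm v2,0}^{0}(t/\varepsilon)$; hence in $(v^{*})^{T}Gv^{*}-\widetilde{v}^{T}G\widetilde{v}=(v^{*}-\widetilde{v})^{T}G(v^{*}+\widetilde{v})$ the factor $v^{*}-\widetilde{v}$ is bounded while $v^{*}+\widetilde{v}$ is $O(1/\varepsilon)$, so multiplication by $\varepsilon^{2}$ produces a uniformly $O(\varepsilon)$ integrand and an $O(\varepsilon)$ integral. Collecting the three $O(\varepsilon)$ estimates gives the first inequality in (\ref{ineq-2-for-J_u*-J_v*}); the second follows identically from $J_{\rm v}$, its control term being controlled by $u^{*}-\widetilde{u}=O(\varepsilon)$. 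This proves the theorem.
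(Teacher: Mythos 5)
Your proposal is correct and follows essentially the same route as the paper: the same control-discrepancy estimates ($u^{*}-\widetilde{u}=O(\varepsilon)$, and $v^{*}-\widetilde{v}$ equal to an $O(\varepsilon)$ remainder plus exponentially decaying layer terms), the same variation-of-constants bound $\|\widetilde{\delta}_{z}\|\le C\varepsilon$ exploiting that the layer exponentials integrate to $O(\varepsilon)$, and the same term-by-term cost comparison. The only cosmetic difference is that the paper neutralizes the $\varepsilon^{2}v^{T}G_{\rm u,v}v$ difficulty by first rewriting the control costs in terms of $\lambda_{\rm v2}$ (so no $1/\varepsilon$ ever appears), whereas you keep the controls and use the factorization $(v^{*}-\widetilde{v})^{T}G(v^{*}+\widetilde{v})$ with cancellation of the shared singular part — algebraically the identical estimate.
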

\begin{proof}Let us start with the proof of the first inequality in (\ref{ineq-2-for-J_u*-J_v*}).

Let us substitute the pair of the controls $\big(\widetilde{u}(t),\widetilde{v}(t,\varepsilon)\big)$ into the system (\ref{new-eq}) instead of $\big(u(t),v(t)\big)$. This substitution yields for all $\varepsilon \in (0,\varepsilon_{0}]$
\begin{eqnarray}\label{syst-for-tilde-u-v}
\frac{dz(t)}{dt}=A(t)z(t)+B_{\rm u}(t)\widetilde{u}(t)+B_{\rm v}(t)\widetilde{v}(t,\varepsilon),\  \  \ t\in [0,t_{f}],\  \  \ z(0)=z_{0}.
\end{eqnarray}
Let $z = \widetilde{z}(t,\varepsilon)$, $t \in [0,t_{f}]$ denote the solution of this system.

Substitution of  $\widetilde{z}(t,\varepsilon)$, $\widetilde{u}(t)$ and $\widetilde{v}(t,\varepsilon)$ into the functional (\ref{new-perf-ind-1}) instead of $z(t)$, $u(t)$ and $v(t)$, respectively, yields after a routine rearrangement the following expression for $\widetilde{J}_{\rm u}(\varepsilon)$:
\begin{eqnarray}\label{widetilde-J_u}\widetilde{J}_{\rm u}(\varepsilon) = J_{\rm u}\big(\widetilde{u}(t,\varepsilon),\widetilde{v}(t)\big) = \frac{1}{2}\int_{0}^{t_{f}}\big[\widetilde{z}^{T}(t,\varepsilon)D_{\rm u}(t)\widetilde{z}(t,\varepsilon)\nonumber\\
+ \bar{\lambda}_{\rm u1,0}^{T}(t)S_{\rm u,1}(t)\bar{\lambda}_{\rm u1,0}(t) + \big(\lambda_{\rm v 2,0}^{0}(t/\varepsilon) + \varepsilon \bar{\lambda}_{\rm v2,1}(t)\big)^{T}\big(\lambda_{\rm v 2,0}^{0}(t/\varepsilon) + \varepsilon \bar{\lambda}_{\rm v2,1}(t)\big)\big]dt,\nonumber\\  \varepsilon \in (0,\varepsilon_{0}],\nonumber\\
\end{eqnarray}
where $S_{\rm u,1}(t)$ is the upper left-hand block of the matrix $S_{\rm u}(t)$ given in
(\ref{matr-A-D_u-S_u-blocks}).

Let us denote
\begin{eqnarray}\label{tilde-delta}\widetilde{\delta}_{z}(t,\varepsilon) \stackrel{\triangle}{=} z^{*}(t,\varepsilon) - \widetilde{z}(t,\varepsilon),\  \  \  \ t \in [0,t_{f}],\  \  \  \ \varepsilon \in (0,\varepsilon_{0}],\end{eqnarray}
where $z^{*}(t,\varepsilon)$ is the solution of the system (\ref{syst-for-u^*-v^*}).

Using the equations (\ref{syst-for-u^*-v^*}) and (\ref{syst-for-tilde-u-v}), we directly obtain that $\widetilde{\delta}_{z}(t,\varepsilon)$ is the unique solution of the system
\begin{eqnarray}\label{system-tilde-delta_z}\frac{d\widetilde{\delta}_{z}(t,\varepsilon)}{dt} = A(t)\widetilde{\delta}_{z}(t,\varepsilon) + B_{\rm u}(t)\big(u^{*}(t,\varepsilon) - \widetilde{u}(t)\big) + B_{\rm v}(t)\big(v^{*}(t,\varepsilon) - \widetilde{v}(t,\varepsilon)\big),\nonumber\\
t \in [0,t_{f}],\  \  \  \  \  \ \widetilde{\delta}_{z}(0,\varepsilon) = 0.\nonumber\\
\end{eqnarray}

Let us estimate the differences $\big(u^{*}(t,\varepsilon) - \widetilde{u}(t)\big)$ and $\big(v^{*}(t,\varepsilon) - \widetilde{v}(t,\varepsilon)\big)$.

Using the expressions for $\lambda_{\rm u1}^{1}(t,\varepsilon)$, $\widehat{u}(t,\varepsilon)$ and  $\widetilde{u}(t)$ (see the equations (\ref{zero-order-asympt-solution-new-form}),(\ref{hat-u-v}) and (\ref{suboptim-controls})), as well as the second inequality in (\ref{ineq-bound-terms-1-1}) and the first inequality in (\ref{ineq-u*-v*}), we have
\begin{eqnarray}\label{estim-first-difference}\big\|u^{*}(t,\varepsilon) - \widetilde{u}(t)\big\| \le c_{4}\varepsilon,\  \  \  \ t \in [0,t_{f}],\  \  \  \ \varepsilon \in (0,\varepsilon_{0}],\end{eqnarray}
where $c_{4} > 0$ is some constant independent of $\varepsilon$.

Due to Lemma \ref{justific-asympt-solution} and the equations (\ref{ineq-for-bound-correc-3-1}),(\ref{ineq-for-bound-correc-4-1}), we have
\begin{eqnarray}\label{auxiliary-est-1}\big\|\lambda_{\rm v2}(t,\varepsilon) - \lambda_{\rm v2,0}^{0}(t/\varepsilon) - \varepsilon\bar{\lambda}_{\rm v2,1}(t)\big\| \le c_{0}\varepsilon^{2}\nonumber\\
+ b_{\rm v}^{0}\varepsilon\exp\big(-0.5\beta^{0}t/\varepsilon\big) + b_{\rm v}^{f}\varepsilon\exp\big(\beta^{f}(t - t_{f})/\varepsilon\big),\  \  \  \
t \in [0,t_{f}],\  \  \ \varepsilon \in (0,\varepsilon_{0}].\nonumber\\
\end{eqnarray}

Using this inequality and the expressions for $v^{*}(t,\varepsilon)$ and $\widetilde{v}(t,\varepsilon)$ (see the equations (\ref{u*-v*-new-form}) and (\ref{suboptim-controls})) directly yields
\begin{eqnarray}\label{estim-second-difference}\big\|v^{*}(t,\varepsilon) - \widetilde{v}(t,\varepsilon)\big\| \le c_{0}\varepsilon + b_{\rm v}^{0}\exp\big(-0.5\beta^{0}t/\varepsilon\big) + b_{\rm v}^{f}\exp\big(\beta^{f}(t - t_{f})/\varepsilon\big),\nonumber\\
t \in [0,t_{f}],\  \  \  \  \ \varepsilon \in (0,\varepsilon_{0}].\nonumber\\
\end{eqnarray}

Using the fundamental matrix solution $\Gamma(t,\sigma)$, $0 \le \sigma \le t \le t_{f}$ of the homogeneous system corresponding to the differential equation in (\ref{system-tilde-delta_z}) yields the following expression for $\widetilde{\delta}_{z}(t,\varepsilon)$:
\begin{eqnarray}\label{tilde-delta_z} \widetilde{\delta}_{z}(t,\varepsilon) = \int_{0}^{t}\Gamma(t,\sigma)\big[B_{\rm u}(\sigma)\big(u^{*}(\sigma,\varepsilon) - \widetilde{u}(\sigma)\big) + B_{\rm v}(\sigma)\big(v^{*}(\sigma,\varepsilon) - \widetilde{v}(\sigma,\varepsilon)\big)]d\sigma,\nonumber\\
t \in [0,t_{f}],\  \  \  \  \ \varepsilon \in (0,\varepsilon_{0}].\nonumber
\end{eqnarray}

This equation, along with the inequalities (\ref{estim-first-difference}),(\ref{estim-second-difference}), yields immediately
\begin{eqnarray}\label{ineq-tilde-delta_z} \big\|\widetilde{\delta}_{z}(t,\varepsilon)\big\| \le c_{5}\varepsilon,\  \  \  \ t \in [0,t_{f}],\  \  \  \ \varepsilon \in (0,\varepsilon_{0}],\end{eqnarray}
where $c_{5} > 0$ is some constant independent of $\varepsilon$.

Furthermore, by virtue of the equation for $\lambda_{\rm u 1}^{1}(t,\varepsilon)$ and the inequality for $\lambda_{\rm u 1}(t,\varepsilon)$ (see the equations (\ref{zero-order-asympt-solution-new-form}) and (\ref{ineq-zero-order-solution})), we have the inequality
\begin{eqnarray}\label{ineq-lambda_v}\big\|\lambda_{\rm u1}(t,\varepsilon) - \bar{\lambda}_{\rm u1,0}(t)\big\| \le c_{6}\varepsilon,\  \  \  \ t \in [0,t_{f}],\  \  \  \ \varepsilon \in (0,\varepsilon_{0}],\end{eqnarray}
where $c_{6} > 0$ is some constant independent of $\varepsilon$.

Now, the equations (\ref{expression-J_u*}),(\ref{widetilde-J_u}),(\ref{tilde-delta}) and the inequalities (\ref{auxiliary-est-1}),(\ref{ineq-tilde-delta_z}),(\ref{ineq-lambda_v}) yield the first inequality in (\ref{ineq-2-for-J_u*-J_v*}). The second inequality in (\ref{ineq-2-for-J_u*-J_v*}) is proven similarly. This completes the proof of the theorem.
\end{proof}

\begin{remark}\label{suboptim-solution} Due to Theorem \ref{suboptimal-solution}, the
pair of the controls $\big(\widetilde{u}(t),\widetilde{v}(t,\varepsilon)\big)$ is an asymptotically suboptimal Stackelberg solution in the game (\ref{new-eq})-(\ref{new-perf-ind-2}). Moreover, due to Remark \ref{comparison-tilde-hat}, this suboptimal solution is much simpler than the previously derived in Theorem \ref{suboptimal-solution-hat} suboptimal solution $\big(\widehat{u}(t,\varepsilon),\widehat{v}(t,\varepsilon)\big)$.
\end{remark}

\section{Numerical example} Consider the supply chain model of \cite{Turetsky-Glizer-IMA-OR-2025} which structure is similar to \cite{Assarzadegan_etal}.

\subsection{Model description}

This supply chain consists of a single Manufacturer who produces the National Brand (NB) product and, \textit{additionally, invests in innovation}, and a single Retailer that sells the Manufacturer's product and, \textit{additionally, promotes her own substitute Store Brand (SB)}. Both National Brand and Store Brand have their "badwill" levels, $B_{\rm NB}$ and $B_{SB}$, respectively.  The badwill represents the opposite of the goodwill (see, e.g. \cite{Assarzadegan_etal}), i.e., the variable indicating the customer's loyalty to the brand and the brand reputation. The badwill dynamics is described by the linear differential equations for $t\in [0, t_f]$:
\begin{eqnarray}\label{badwill-dynamics}
\dot B_{\rm NB}=a_1  B_{\rm NB} - b_1 u+c_1 v,\nonumber\\
\dot B_{\rm SB}=a_2  B_{\rm SB} + b_2 u-c_2 v,\nonumber\\
\end{eqnarray}
where $a_1$, $a_2$, $b_1$, $b_2$, $c_1$, $c_2$ are constant positive coefficients, $u=u(t)\in \mathbb{R}$ is the Manufacturer's control (the NB innovation investment), $v=v(t)\in \mathbb{R}$ is the Retailer's control (the SB promotion investment).
The equations in (\ref{badwill-dynamics}) imply that by investing in innovation, the Manufacturer decreases the NB badwill and, consequently, increases the SB badwill. And, vice versa, by investing in promoting SB, the Retailer increases the NB badwill and decreases the SB badwill. In the absence of the additional investments, both badwills have the tendency to increase.

For
\begin{equation}\label{modelmatrices}Z(t)=\left(B_{\rm NB}(t), B_{\rm SB}(t)\right)^T,\ {\mathcal A}(t)\equiv\begin{bmatrix}a_1 & 0\\a_2 & 0\end{bmatrix},\ \ {\mathcal B}_{\rm u}(t)\equiv\begin{bmatrix}-b_1\\b_2\end{bmatrix},\ \ {\mathcal B}_{\rm v}(t)\equiv\begin{bmatrix}c_1\\-c_2\end{bmatrix},
\end{equation}
the system (\ref{badwill-dynamics}) admits a form (\ref{or-eq}).

Both players minimize their storage cost which is assumed to be proportional to the squared badwill, as well as their control effort, yielding the cost functionals
\begin{equation}\label{JM}J_{M}=\frac{1}{2}\int_0^{t_f}\left[k_M B_{\rm NB}^2(t)+ \alpha u^2(t)\right]dt,\end{equation}
\begin{equation}\label{JR}J_{R}=\frac{1}{2}\int_0^{t_f}\left[k_R B_{\rm SB}^2(t)+\beta v^2(t)\right]dt,\end{equation} of the Manufacturer and the Retailer, respectively, where $k_M, k_R, \alpha, \beta>0$ are constant coefficients.

The competition in this supply chain is modeled by a two-player finite horizon
linear-quadratic Stackelberg differential game with the Manufacturer as a leader and the Retailer as a follower. The game is formulated in open-loop controls.
In this example, we assume a short-horizon competition (a small enough value of $t_f$). It means that the Manufacturer as a leader is \textit{committed} to the once chosen optimal open-loop strategy and does not recalculate it dynamically.

\subsection{The case of the Retailer's cheap control}
Let $\alpha = 1$ and $\beta=\varepsilon^2\ll 1$. Then, for
\begin{eqnarray}\label{costmatrices}{\mathcal D}_{\rm u}(t)\equiv\begin{bmatrix}k_M & 0\\0 & 0\end{bmatrix},\ \  {\mathcal D}_{\rm v}(t)\equiv\begin{bmatrix}0 & 0\\0 & k_R\end{bmatrix},\nonumber\\ G_{\rm u,u}(t)\equiv 1,\  G_{\rm v,v}(t)\equiv 1,\  G_{\rm u,v}(t)=G_{\rm v,u}(t)\equiv 0,\end{eqnarray} the game (\ref{badwill-dynamics}) -- (\ref{JR}) admits the form of  (\ref{or-eq}) -- (\ref{perf-ind-2}) (see Remark \ref{mathcal-G_uu-G_vv=I}).

It is readily verified that the assumptions (A1) -- (A6) are satisfied. In order to carry out the transform (\ref{state-transform}), we chose a complement matrix to ${\mathcal B}_{\rm v}(t)$ as ${\mathcal B}_{c}(t)\equiv\begin{bmatrix}c_2,c_1\end{bmatrix}^T$. By simple algebra, this yields
\begin{equation}\label{LvRv}{\mathcal L}_{\rm v}(t)\equiv \begin{bmatrix}\dfrac{c_1^2}{c_2}+c_2\\0\end{bmatrix},\ \ {\mathcal R}_{\rm v}(t)\equiv \begin{bmatrix}\dfrac{c_1^2}{c_2}+c_2 & c_1\\0 & -c_2\end{bmatrix}.\end{equation}
In this example, the boundary-value problem (\ref{equiv-system}) -- (\ref{equiv-cond}) is written down for the scalar variables $z_i(t,\varepsilon)$, $\lambda_{\rm ui}(t,\varepsilon)$, $\lambda_{\rm vi}(t,\varepsilon)$, $\mu_i(t,\varepsilon)$, $(i=1,2)$. Due to  (\ref{LvRv}), the coefficients of (\ref{equiv-system}) are
\begin{eqnarray}\label{BVP-coefs}A_1(t)\equiv \dfrac{a_1c_2+a_2c_1}{c_2},\  \  \ A_2(t)\equiv \dfrac{c_1(a_1c_2+a_2c_1)}{c_1^2+c_2^2},\nonumber\\  A_3(t)\equiv -\dfrac{a_2(c_1^2+c_2^2)}{c_2^2},\  \  \ A_4(t)\equiv -\dfrac{a_2c_1}{c_2},  \nonumber\\
B_{\rm u,1}(t) \equiv \dfrac{b_{2}c_{1} - b_{1}c_{2}}{c_{1}^{2} + c_{2}^{2}},\  \  \ B_{u,2}(t) \equiv - \dfrac{b_{2}}{c_{2}},\nonumber\\
S_{\rm u,1}(t)\equiv\left(\dfrac{b_1c_2-b_2c_1}{c_1^2+c_2^2}\right)^2,\  \ S_{\rm u,2}(t)\equiv \dfrac{b_2(b_1c_2-b_2c_1)}{c_2(c_1^2+c_2^2)},\  \ S_{\rm u,3}(t)\equiv \dfrac{b_2^2}{c_2^2},\nonumber\\
S_{\rm v,1}(t)=S_{\rm v,2}(t)\equiv 0,\  \  \ S_{\rm v,3}(t)\equiv \dfrac{1}{\varepsilon^2},\nonumber\\
D_{\rm u,1}(t)\equiv k_M\left(\dfrac{c_1^2+c_2^2}{c_2}\right)^2,\  \ D_{\rm u,2}(t)\equiv k_M\dfrac{c_1(c_1^2+c_2^2)}{c_2},\  \ D_{\rm u,3}(t)\equiv k_M c_1^2,\nonumber\\
D_{\rm v,1}(t) \equiv 0,\  \  \ D_{\rm v,2}(t)\equiv k_R c_2^2.\nonumber\\
\end{eqnarray}
Using the equations (\ref{u*-v*-new-form}) and (\ref{BVP-coefs}), we obtain the optimal open-loop Stackelberg controls  $u^*(t,\varepsilon)$ and $v^*(t,\varepsilon)$ in this example as:
\begin{eqnarray}\label{optimalstrategies-example}u^*(t,\varepsilon)=\dfrac{b_1c_2-b_2c_1}{c_1^2+c_2^2}\lambda_{\rm u1}(t,\varepsilon)+\dfrac{\varepsilon b_2}{c_2}\lambda_{\rm u2}(t,\varepsilon),\ \
v^*(t,\varepsilon) = - \frac{1}{\varepsilon}\lambda_{\rm v2}(t,\varepsilon),\nonumber\\ t\in [0, t_f],\ \ \varepsilon>0,
\end{eqnarray}
where $\lambda_{\rm u1}(t,\varepsilon)$, $\lambda_{\rm u2}(t,\varepsilon)$, and $\lambda_{\rm v2}(t,\varepsilon)$ are the components of the solution of the boundary-value problem (\ref{equiv-system}) -- (\ref{equiv-cond}) with the coefficients (\ref{BVP-coefs}).

The respective asymptotically suboptimal controls (\ref{hat-u-v}) and simplified asymptotically suboptimal controls (\ref{suboptim-controls}) in this example are
\begin{eqnarray}\label{approximatestrategies-example}\widehat{u}(t,\varepsilon)=\dfrac{b_1c_2-b_2c_1}{c_1^2+c_2^2}\lambda^1_{\rm u1}(t,\varepsilon)+\dfrac{\varepsilon b_2}{c_2}\lambda^1_{\rm u2}(t,\varepsilon),\  \  \
\widehat{v}(t,\varepsilon) = - \frac{1}{\varepsilon}\lambda^1_{\rm v2}(t,\varepsilon),\nonumber\\ t\in [0, t_f],\ \ \varepsilon>0,
\end{eqnarray}
\begin{eqnarray}\label{suboptimalstrategies-example}\widetilde{u}(t)=\dfrac{b_1c_2-b_2c_1}{c_1^2+c_2^2}\bar{\lambda}_{\rm u1,0}(t),\  \  \
\widetilde{v}(t,\varepsilon) = - \frac{1}{\varepsilon}\lambda_{\rm v2,0}^{0}(t/\varepsilon) - \bar{\lambda}_{\rm v2,1}(t),\nonumber\\ t\in [0, t_f],\ \ \varepsilon>0,
\end{eqnarray}
where $\lambda^1_{\rm u1}(t,\varepsilon)$, $\lambda^1_{\rm u2}(t,\varepsilon)$, $\lambda^1_{\rm v1}(t,\varepsilon)$, $\bar{\lambda}_{\rm u1,0}(t)$, $\lambda_{\rm v2,0}^{0}(t/\varepsilon)$, and $\bar{\lambda}_{\rm v2,1}(t)$ are the components of the first-order asymptotic solution (\ref{first-order-asympt-solution}) of the boundary-value problem (\ref{equiv-system}) -- (\ref{equiv-cond}) with the coefficients (\ref{BVP-coefs}).
In the considered example, the components of this asymptotic solution are obtained numerically.

In Fig. \ref{deltauv-fig}, the control errors
\begin{equation}\label{Deltauv}\Delta \widehat{u}(\varepsilon)=\max_{t \in [0,t_{f}]}\|u^*(t,\varepsilon)-\widehat{u}(t,\varepsilon)\|,\  \  \  \  \Delta \widehat{v}(\varepsilon)=\max_{t \in [0,t_{f}]}\|v^*(t,\varepsilon)-\widehat{v}(t,\varepsilon)\|,
\end{equation}
are depicted for $a_1=0.1$, $a_2=0.2$, $b_1=0.5$, $b_2=0.4$, $c_1=0.2$, $c_2=0.6$, $k_M=1$, $k_R=5$, $t_f=2$. This figure illustrates the statement of Theorem \ref{asymp-u*-v*} that the asymptotically suboptimal control of a non-cheap control player (the Manufacturer) guarantees a higher order accuracy (with respect to $\varepsilon$) than that of a cheap control player (the Retailer).

In Fig. \ref{deltaJuJv-fig}, the corresponding suboptimal cost errors
\begin{equation}\label{Deltahat_JuJv}\Delta \widehat{J}_k(\varepsilon)=|J_k^*(\varepsilon)-\widehat{J}_k(\varepsilon)|,\  \  \ k={\rm R}, {\rm M},\textbf{}
\end{equation}
\begin{equation}\label{Deltatilde_JuJv}\Delta \widetilde{J}_k(\varepsilon)=|J_k^*(\varepsilon)-\widetilde{J}_k(\varepsilon)|,\  \  \ k={\rm R}, {\rm M},
\end{equation}
are depicted, illustrating the estimates established in Theorems \ref{suboptimal-solution-hat} and \ref{suboptimal-solution}. It is seen that the asymptotically suboptimal Stackelberg solution $\big(\widehat{u}(t,\varepsilon),\widehat{v}(t,\varepsilon)\big)$, requiring more calculation work, provides a better approximation of the optimal values of the cost functionals than the simplified suboptimal solution $\big(\widetilde{u}(t,\varepsilon),\widetilde{v}(t,\varepsilon)\big)$, requiring less calculation work.

%\newpage
\begin{figure}[hbt!]
    \centering
    \includegraphics[width=0.98\linewidth]{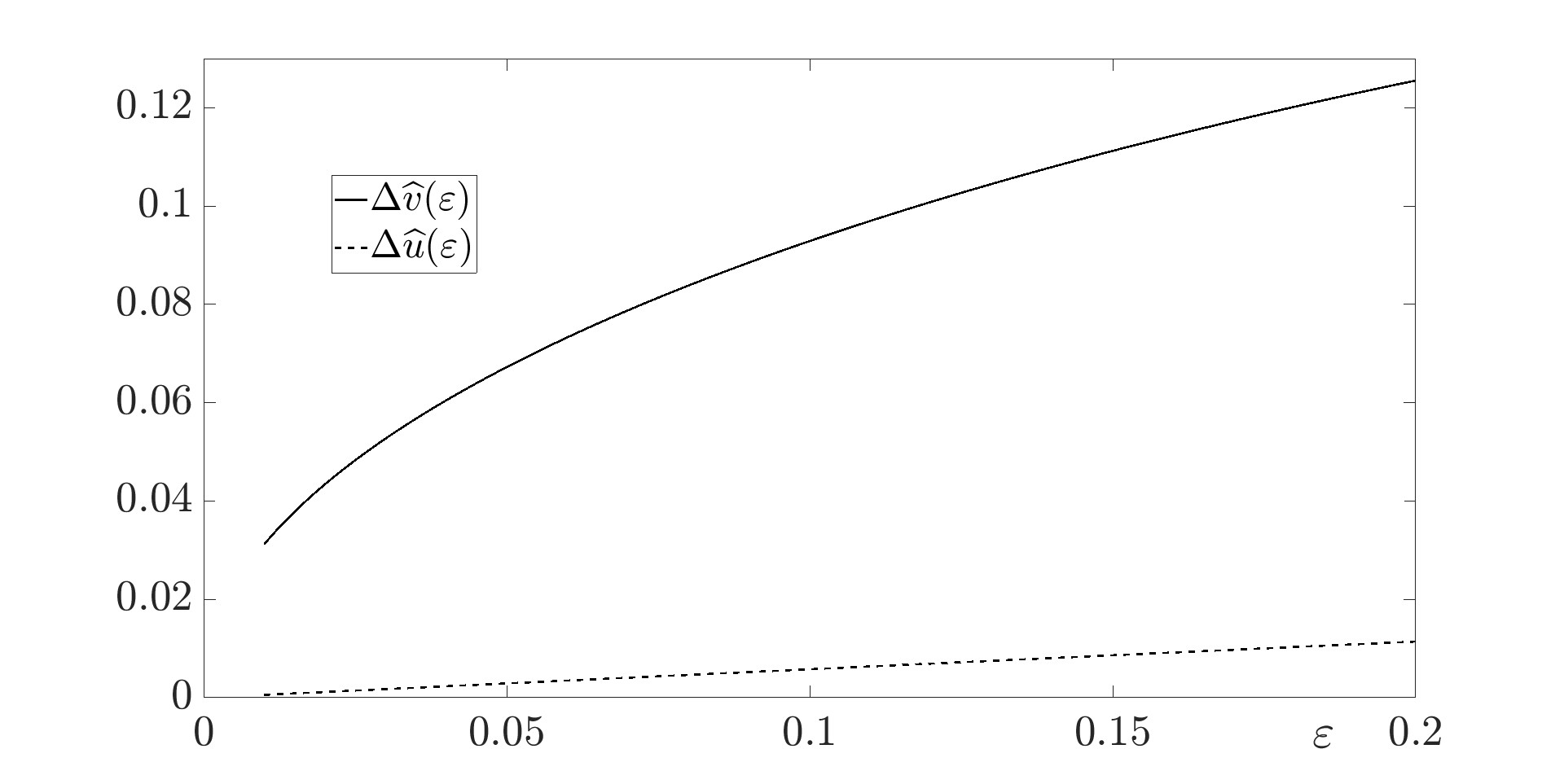}
    \caption{Asymptotically suboptimal controls accuracy}
    \label{deltauv-fig}
\end{figure}

\begin{figure}[hbt!]
    \centering
    \includegraphics[width=0.98\linewidth]{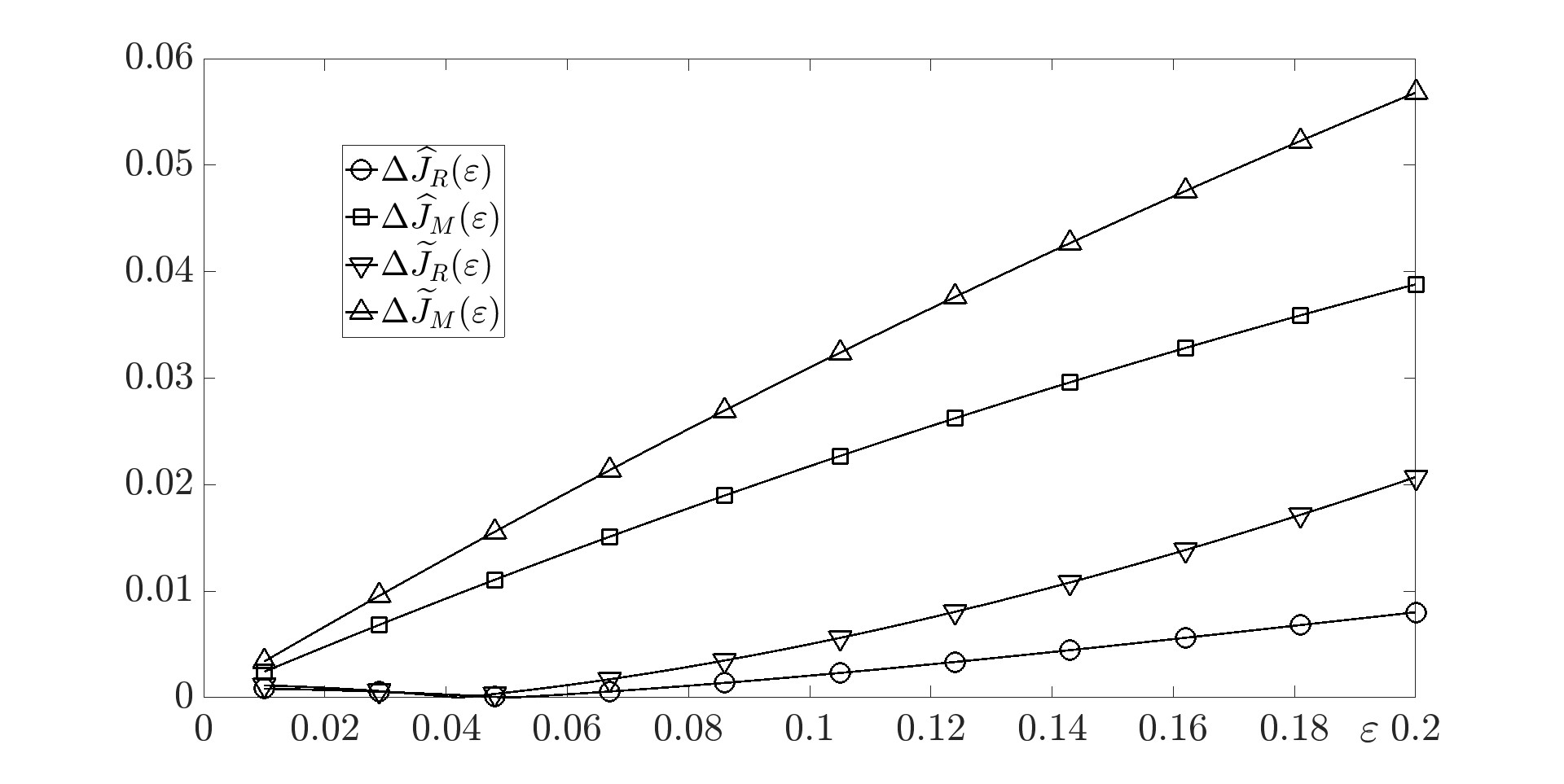}
    \caption{Cost errors by using $\left(\widehat{u}(t,\varepsilon),\widehat{v}(t,\varepsilon)\right)$ and $\left(\widetilde{u}(t,\varepsilon),\widetilde{v}(t,\varepsilon)\right)$}
    \label{deltaJuJv-fig}
\end{figure}

\begin{table}[hbt!]
\bgroup
\def\arraystretch{1.3}
\begin{tabular}{|c|c|c|c|c|}
\hline
$\varepsilon$ &  $\delta \widehat{J}_{M}$ & $\delta \widehat{J}_{R}$ & $\delta \widetilde{J}_{M}$ & $\delta \widetilde{J}_{R}$\\
\hline
 $0.2$ & $1.4368$ & $0.4867$ & $2.1032$ & $1.2561$\\
 \hline
$0.1$ &  $0.7868$ & $0.2630$ & $1.1215$ & $0.6390$\\
\hline
$0.05$ & $0.4114$ & $0.0040$ & $0.5791$ & $0.1245$\\
 \hline
 $0.01$ & $0.0857$ & $1.038$  & $0.1195$ & $1.4233$\\
\hline
\end{tabular}
\egroup
\caption{Relative cost errors}\label{relativeerrors-table}
\end{table}

In Table \ref{relativeerrors-table},
the relative errors
\begin{equation}\label{deltahatJuJv}\delta \widehat{J}_k(\varepsilon)=\dfrac{\Delta \widehat{J}_k(\varepsilon)}{J_k^*(\varepsilon)}\cdot 100\%,\  \  \ k={\rm M}, {\rm R},
\end{equation}
\begin{equation}\label{deltatildeJuJv}\delta \widetilde{J}_k(\varepsilon)=\dfrac{\Delta \widetilde{J}_k(\varepsilon)}{J_k^*(\varepsilon)}\cdot 100\%,\  \  \ k={\rm M}, {\rm R},
\end{equation} are presented.
It is seen that these errors are small and mainly decrease. The increasing values of
$\delta \widehat{J}_R(\varepsilon)$ and $\delta \widetilde{J}_R(\varepsilon)$ for $\varepsilon=0.01$ are explained by small values of the cost $J_R^*$.

Fig. \ref{BSBopt_eps-fig} shows the impact of $\varepsilon$ on the optimal SB badwill $B_{\rm SB}^*(t,\varepsilon)$. The effect is quite natural: for smaller values of $\varepsilon$ (meaning a cheaper SB promotion investment by the Retailer), the badwill faster tends to zero faster.
\begin{figure}[hbt!]
    \centering
    \includegraphics[width=0.98\linewidth]{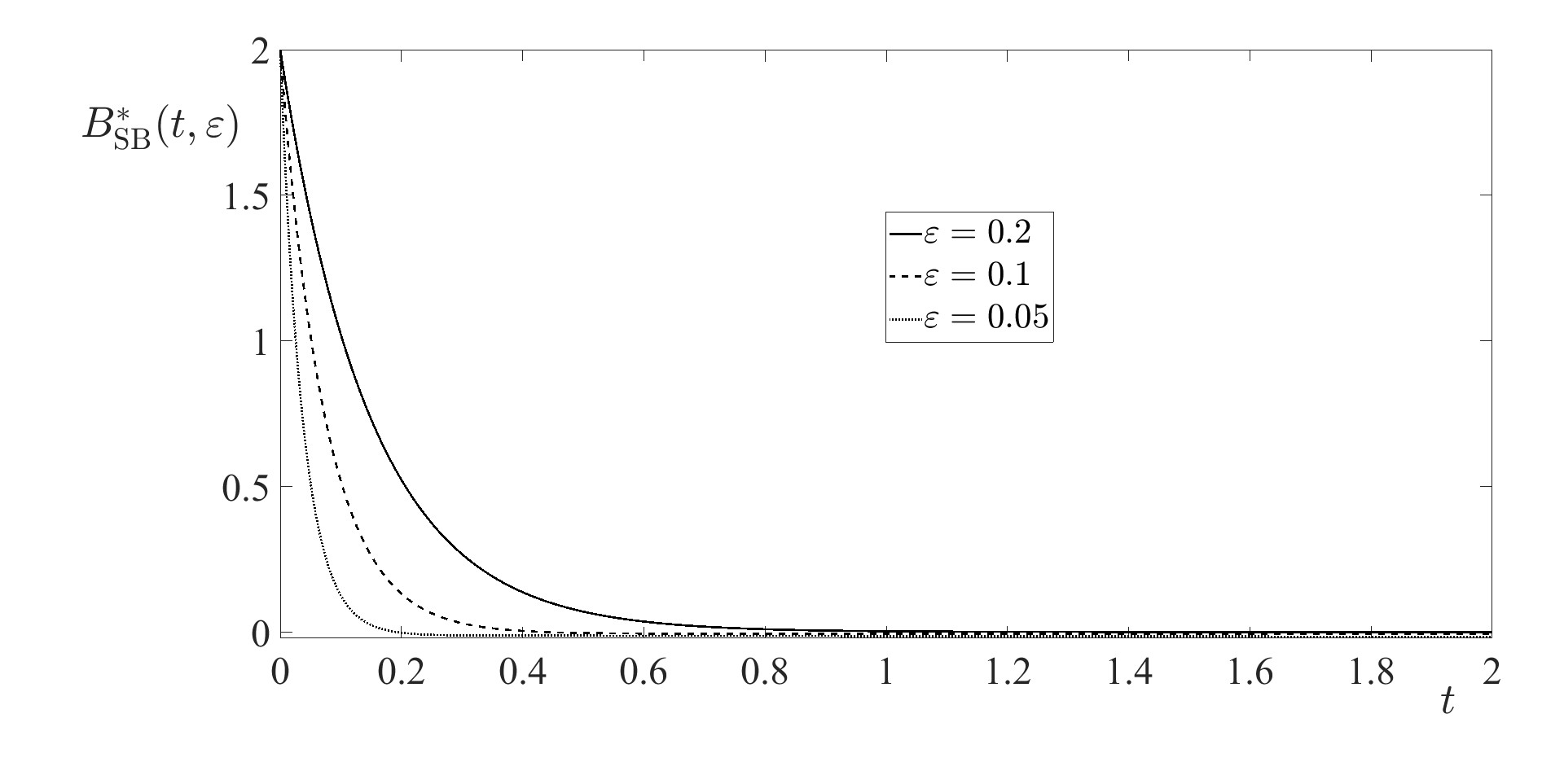}
    \caption{Impact of $\varepsilon$ on the optimal SB badwill}
    \label{BSBopt_eps-fig}
\end{figure}

\subsection{Comparison with the case of the Manufacturer's cheap control}

In this section, along with the case of the Retailer's cheap control ($\alpha=1$, $\beta=\varepsilon^2$), we consider the case of the Manufacturer's cheap control ($\alpha=\varepsilon^2$, $\beta=1$). In this case, the Manufacturer's investment in the innovation is cheap. The asymptotic game solution in the case of the leader's cheap control is established in \cite{Glizer-Turetsky-Axioms-2024}. The baseline case for the comparison is $\alpha=\beta=1$, where the controls of both players are not cheap.

Let $J_k^*(\alpha,\beta)$, ($k={\rm R}, {\rm M}$), denote the optimal costs for given values of the penalty coefficients $\alpha$ and $\beta$. Thus, $J_k^*(1,\varepsilon^2)$, $J_k^*(\varepsilon^2, 1)$, and $J_k^*(1,1)$, ($k={\rm R}, {\rm M}$),  are the optimal costs in the case of the Retailer's cheap control, in the case of the Manufacturer's cheap control, and in the case with no cheap controls, respectively.

Fig. \ref{cost_improvement-fig} demonstrates that both players benefit from using cheap control, i.e., enjoy the decrease of their cost in comparison with the no-cheap control case. It is seen that the values $J_M^*(\varepsilon^2,1)$ and $J_R^*(1,\varepsilon^2)$ are smaller than the respective baseline no-cheap control values $J_M^*(1,1)$ and $J_R^*(1,1)$ in the whole range of $\varepsilon\in [0.01,0.2]$. The opposite effect is that both players suffer from not using the cheap control, i.e., their cost of a non-cheap control increases in comparison with the case where the advisory refrains from using the cheap control. This effect is clearly demonstrated in Fig. \ref{cost_deterioration-fig}: the values  $J_M^*(1,\varepsilon^2)$ and $J_R^*(\varepsilon^2,1)$ are larger than the respective baseline no-cheap control values $J_M^*(1,1)$ and $J_R^*(1,1)$ in the whole range of $\varepsilon\in [0.01,0.2]$. In this figure, one observes that the deterioration of the Retailer's (the follower's) cost is considerably higher than that of the Manufacturer (the leader).

\begin{figure}[hbt!]
    \centering
    \includegraphics[width=0.98\linewidth]{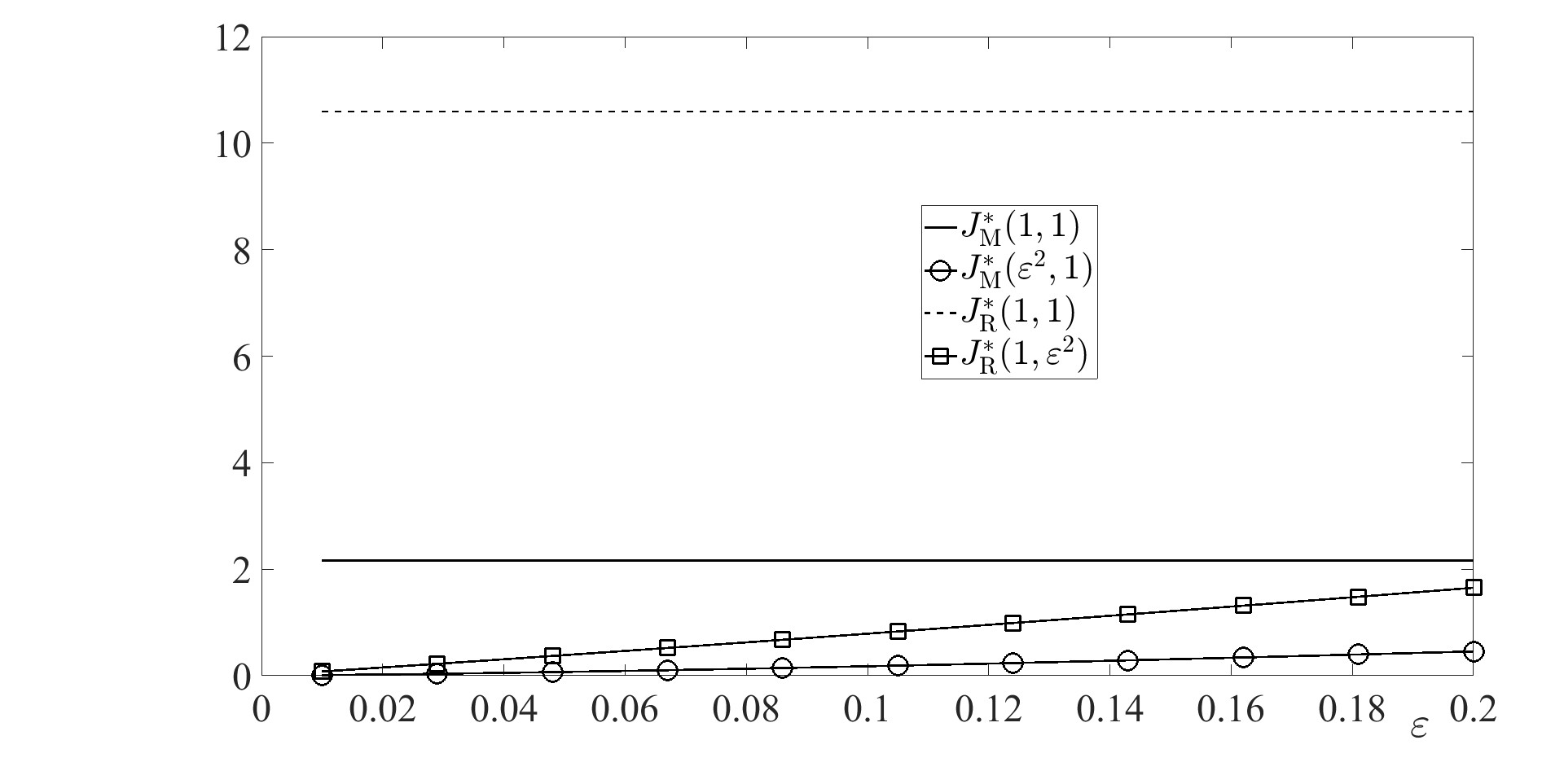}
    \caption{Cost improvement of the player with cheap control}
    \label{cost_improvement-fig}
\end{figure}

\begin{figure}[hbt!]
    \centering
    \includegraphics[width=0.98\linewidth]{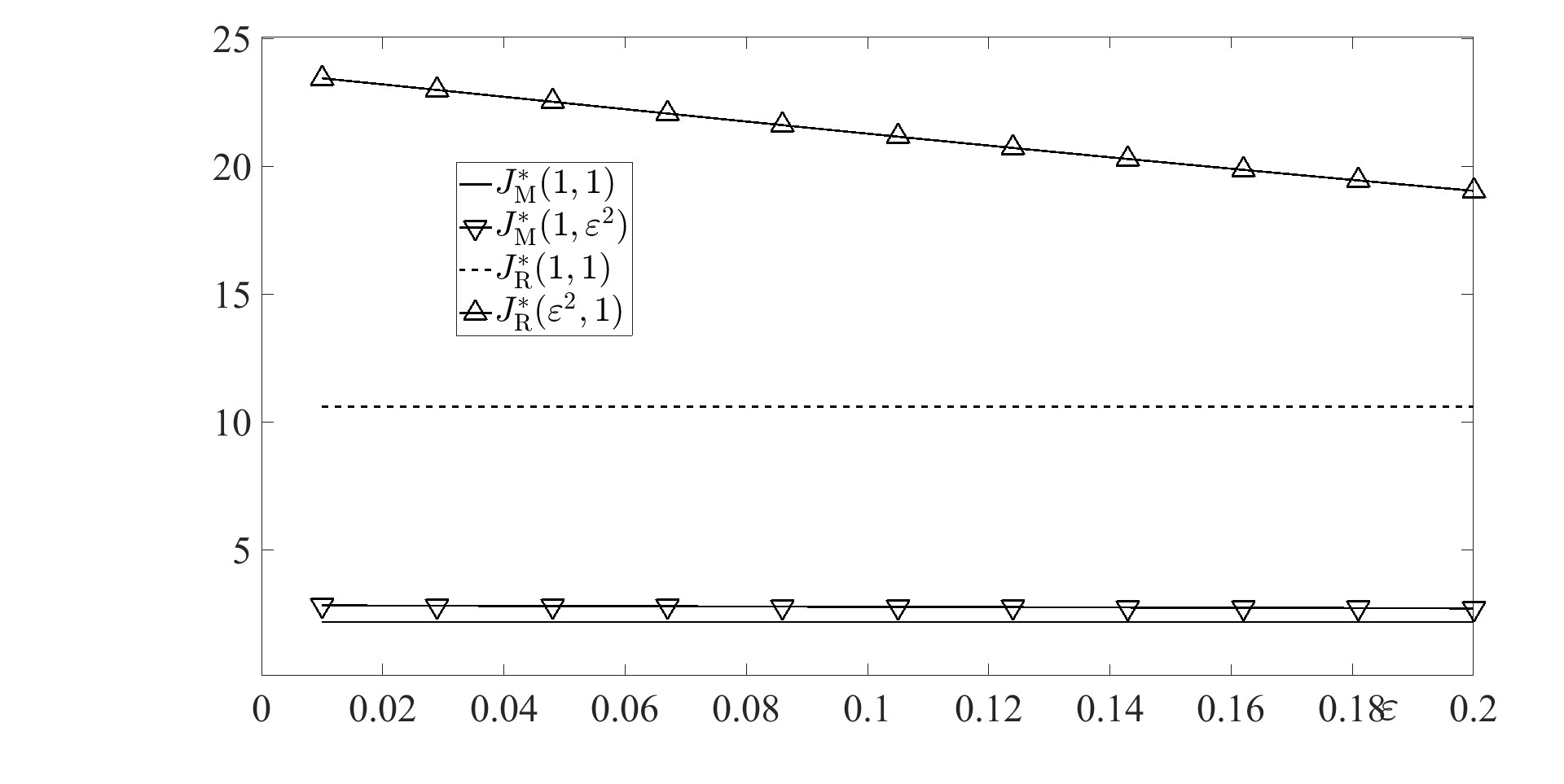}
    \caption{Cost deterioration of the player with non-cheap control}
    \label{cost_deterioration-fig}
\end{figure}
\newpage
Let
\begin{eqnarray}\label{relative_improvements}
\delta J_{\rm M}^{\rm M}(\varepsilon)=\dfrac{J_{\rm M}^*(1,1) - J_{\rm M}^*(\varepsilon^2,1)}{J_{\rm M}^*(1,1)}\cdot 100\%,\nonumber\\ \delta J_{\rm R}^{\rm R}(\varepsilon)=\dfrac{J_{\rm R}^*(1,1) - J_{\rm R}^*(1,\varepsilon^2)}{J_{\rm R}^*(1,1)}\cdot 100\% \nonumber\\
\end{eqnarray}
be the relative cost improvements of the Manufacturer and the Retailer as cheap control players. In Fig. \ref{relative_cost_improvement-fig}, the relative improvements (\ref{relative_improvements}) are depicted, showing that the costs of the cheap control players are relatively improved in a similar way.

\begin{figure}[hbt!]
    \centering
    \includegraphics[width=0.98\linewidth]{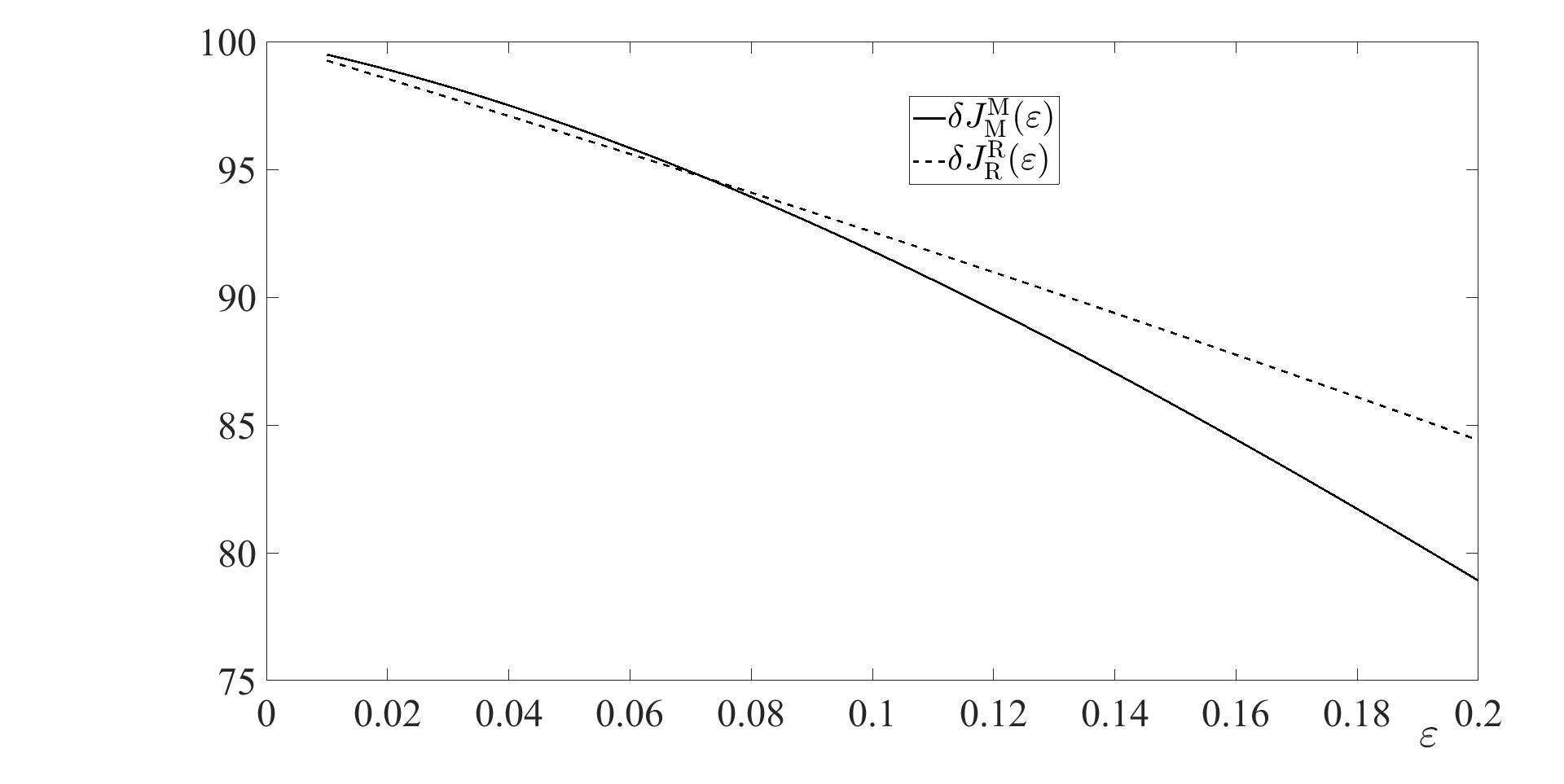}
    \caption{Relative cost improvement of the player with cheap control}
    \label{relative_cost_improvement-fig}
\end{figure}

Let
\begin{eqnarray}\label{relative_deteriorations}
\delta J_{\rm M}^{\rm R}(\varepsilon)=\dfrac{J_{\rm M}^*(1,\varepsilon^2)-{J_{\rm M}^*(1,1)}}{J_{\rm M}^*(1,1)}\cdot 100\%,\nonumber\\
\delta J_{\rm R}^{\rm M}(\varepsilon)=\dfrac{J_{\rm R}^*(\varepsilon^2,1)-{J_{\rm R}^*(1,1)}}{J_{\rm R}^*(1,1)}\cdot 100\% \nonumber\\
\end{eqnarray}
be the relative cost deteriorations of the Manufacturer and the Retailer as non-cheap control players, while the corresponding opponent is a cheap control player.

In Fig. \ref{relative_cost_deterioration-fig}, the relative deteriorations (\ref{relative_deteriorations}) are depicted. It is seen that the relative cost deterioration of the non-cheap control Retailer is considerably larger than the relative cost deterioration of the non-cheap control Manufacturer. This can be explained by the fact that the cheap-control Manufacturer enjoys two advantages - both as a leader and as a cheap control player.

\begin{figure}[hbt!]
    \centering
    \includegraphics[width=0.98\linewidth]{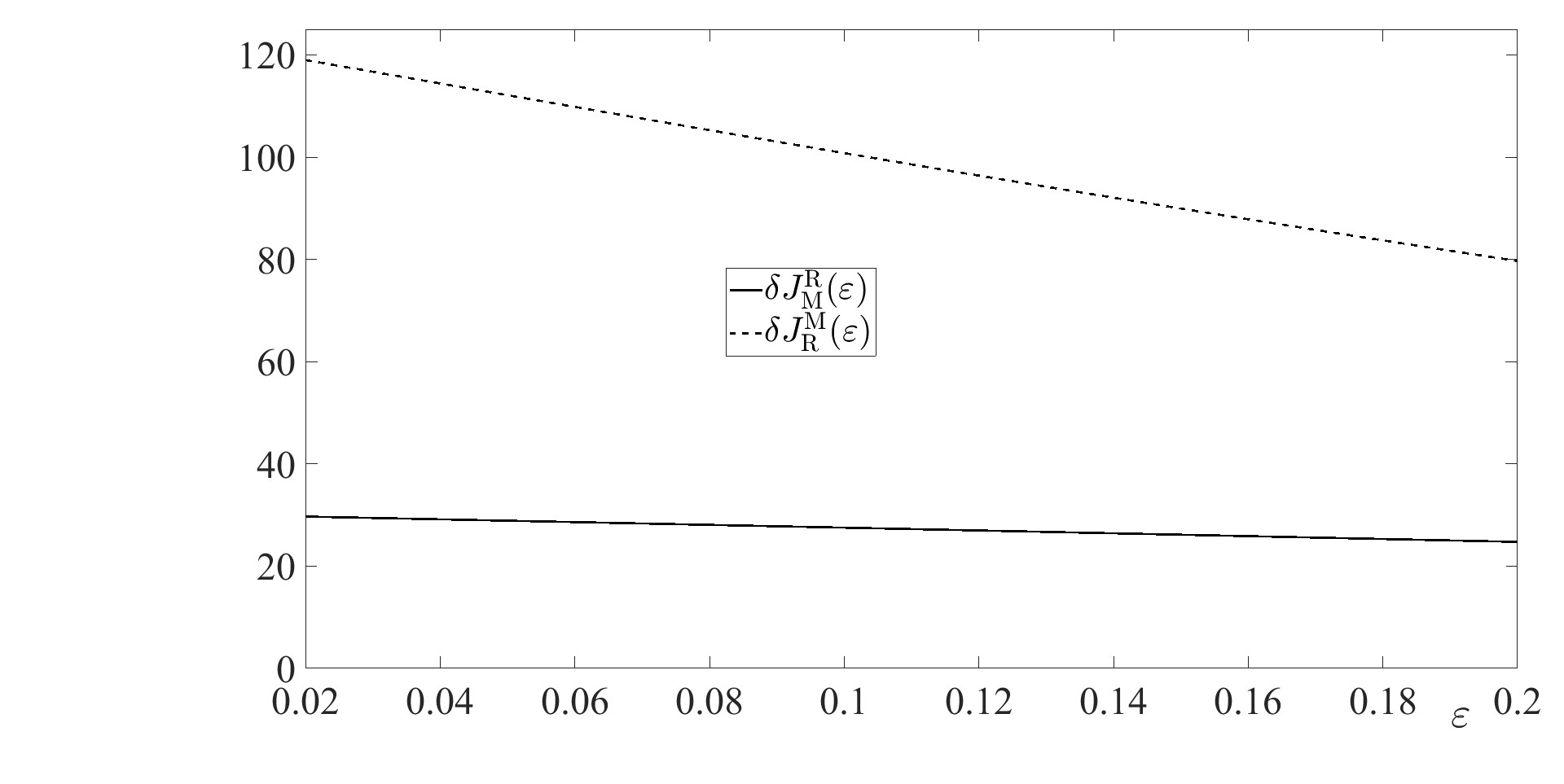}
    \caption{Relative cost deterioration of the player with non-cheap control}
    \label{relative_cost_deterioration-fig}
\end{figure}

\newpage

\section{Conclusions}

In this paper, a two-player finite horizon linear-quadratic Stackelberg differential game was considered in the case where the control cost of the follower in the cost functionals of both players is much smaller than the state cost and the cost of the leader's control. Due to this feature of the follower's control cost, the considered game is a cheap control game. For this game, an open-loop solution was sought. The proper change of the state variable transforms the initially formulated game to an equivalent, simpler, cheap control Stackelberg game. The dynamics equation of the new game consists of two modes. The first mode is controlled directly only by the leader, while the second mode is controlled directly by both players. Moreover, the dimension of the second mode equals the dimension of the follower's control, and the gain matrix for the follower's control in this mode is the identity matrix. The transformed game is also a cheap control game.
Based on the Stackelberg game's solvability conditions, the solution of the transformed game was converted to the solution of the linear singularly perturbed boundary-value problem, which dimension is four times larger than the dimension of the game's dynamics. This boundary-value problem is of the conditionally stable type. The differential system of this boundary-value problem has four slow state variables and four fast state variables. Based on the Boundary Functions Method, the first-order asymptotic solution to this singularly perturbed boundary-value problem was derived. Using this asymptotic solution, the asymptotic expansions of the optimal open-loop controls of the leader and the follower are obtained. Furthermore, suboptimal controls of the players are designed. Asymptotic approximations of the optimal values of the cost functionals are also derived. As an application of the theoretical results, an asymptotic solution of a supply chain problem with a single manufacturer and a single cheap control retailer was derived.

Completing this section, we would like to mention several issues connected with the topic of the paper, which are interesting ones for future investigations. These issues are the following: (a) asymptotic analysis of the feedback solution in the finite horizon Stackelberg linear-quadratic differential game with cheap control of a leader/follower; (b) the open-loop solution and the feedback solution of a finite horizon singular Stackelberg linear-quadratic differential game; (c) various real-life applications of the cheap control/singular Stackelberg linear-quadratic differential games.

 %\bibliographystyle{sn-mathphys-num}
% \bibliography{references_arXiv}

\end{document}